\newcommand{\e}[1]{{\mathbf{e}\left( #1 \right)}}
\newcommand{\ad}{\operatorname{ad}}
\newcommand{\qtextq}[1]{{\quad \text{#1} \quad}}
\newcommand{\qtext}[1]{{\quad \text{#1}}}
\newcommand{\dash}{\ensuremath{\text{---}}}
\renewcommand\theequation{\thesection.\arabic{equation}}
\newlist{itemize-small}{itemize}{1}
\setlist[itemize-small]{label=\textbullet,leftmargin=0cm,itemindent=\parindent,labelwidth=\itemindent,labelsep=-1mm,align=left}
\NewDocumentCommand{\intl}{g}{
    \IfNoValueTF{#1}{\int\limits}{\int\limits_{\mathclap{#1}}}
}
\NewDocumentCommand{\suml}{g}{
    \IfNoValueTF{#1}{\sum\limits}{\sum\limits_{\mathclap{#1}}}
}
\NewDocumentCommand{\prodl}{g}{
    \IfNoValueTF{#1}{\prod\limits}{\prod\limits_{\mathclap{#1}}}
}
\newcommand{\iso}{\cong}
\DeclareMathAlphabet{\mathbbold}{U}{bbold}{m}{n}
\DeclareMathOperator{\Mat}{Mat}
\newcommand{\defeq}{\coloneqq}
\DeclarePairedDelimiter\floor{\lfloor}{\rfloor}
\newcommand{\BA}{{\mathbb {A}}}
\newcommand{\BC}{{\mathbb {C}}}
\newcommand{\BN}{{\mathbb {N}}}
\newcommand{\BQ}{{\mathbb {Q}}}
\newcommand{\BR}{{\mathbb {R}}}
\newcommand{\BZ}{{\mathbb {Z}}}
\newcommand{\CF}{{\mathcal {F}}}
\newcommand{\RG}{{\mathrm {G}}}
\newcommand{\GL}{{\mathrm{GL}}}
\renewcommand{\Im}{{\mathrm{Im}}}
\newcommand{\I}{{\mathrm{I}}}
\renewcommand{\Re}{{\mathrm{Re}}}
\newcommand{\SL}{{\mathrm{SL}}}
\newcommand{\SO}{{\mathrm{SO}}}
\newcommand{\Spin}{{\mathrm{Spin}}}
\newcommand{\SU}{{\mathrm{SU}}}
\newcommand{\tr}{{\mathrm{tr}}}
\newcommand{\ol}{\overline}
\newcommand{\ul}{\underline}
\newcommand{\bs}{\backslash}
\newcommand{\lie}[1]{\mathfrak{#1}}
\newcommand{\Oh}{\mathcal{O}}
\newcommand{\diag}{\operatorname{diag}}
\newcommand{\trdiag}{\operatorname{trdiag}}
\newtheorem{mainthm}{Theorem}
\newtheorem{thm}{Theorem}[section]
\newtheorem{cor}[thm]{Corollary}
\newtheorem{lem}[thm]{Lemma}
\newtheorem{prop}[thm]{Proposition}
\newtheorem {ques/conj}[thm]{Question/Conjecture}
\theoremstyle{definition}
\newtheorem{examp}[thm]{Example}
\newtheorem{rmk}[thm]{Remark}
\newcommand{\ads}{\mathbb{A}}
\newcommand{\ints}{\mathbb{Z}}
\newcommand{\Rmnum}[1]{\expandafter\@slowromancap\romannumeral #1@}
\begin{document}
\renewcommand{\theequation}{\arabic{equation}}
\numberwithin{equation}{section}

\title[Fourier coefficients of small automorphic representations]{Fourier coefficients attached to small automorphic representations of $\SL_n(\mathbb{A})$}

\author[O. Ahl\'en]{Olof Ahl\'en}
\address{Max Planck Institute for Gravitational Physics (Albert Einstein Institute)\\
Am M\"uhlenberg 1\\
14476 Potsdam, Germany}
\email{olof.ahlen@aei.mpg.de}

\author[H. Gustafsson]{Henrik P. A. Gustafsson}
\address{Department of Physics\\
Chalmers University of Technology\\
 SE-412 96 Gothenburg, Sweden}
\email{henrik.gustafsson@chalmers.se}

\author[A. Kleinschmidt]{Axel Kleinschmidt}
\address{Max Planck Institute for Gravitational Physics (Albert Einstein Institute)\\
Am M\"uhlenberg 1\\
14476 Potsdam, Germany
\hfill \newline
\indent Solvay Institutes \\
ULB-Campus Plaine CP231, BE-1050 Brussels, Belgium}
\email{axel.kleinschmidt@aei.mpg.de}

\author[B. Liu]{Baiying Liu}
\address{Department of Mathematics\\
Purdue University\\
150 N. University St\\
West Lafayette, IN, 47907}
\email{liu2053@purdue.edu}

\author[D. Persson]{Daniel Persson}
\address{Department of Mathematical Sciences \\
Chalmers University of Technology\\
 SE-412 96 Gothenburg, Sweden}
\email{daniel.persson@chalmers.se}

\subjclass[2000]{Primary 11F70, 22E55; Secondary 11F30}

\date{\today}

\keywords{Small automorphic forms, Fourier coefficients, Fourier expansion, Nilpotent orbits, String Theory, Scattering amplitudes}

\begin{abstract}
We show that Fourier coefficients of automorphic forms attached to minimal or next-to-minimal automorphic representations of $\SL_n(\mathbb{A})$ are completely determined by certain highly degenerate Whittaker coefficients. We give an explicit formula for the Fourier expansion, analogously to the Piatetski-Shapiro--Shalika formula. In addition, we derive expressions for Fourier coefficients associated to all maximal parabolic subgroups. These results have potential applications for scattering amplitudes in string theory. 
\end{abstract}

\maketitle

\tableofcontents

\newpage

\section{Introduction}

\subsection{Background and motivation}
\label{sec:background}
Let $F$ be a number field and $\ads$ be the associated ring of adeles. 
Let $\RG$ be a reductive algebraic group defined over $F$ and $\pi$ an (irreducible) automorphic representation of $\RG(\ads)$ as defined in \cite{BJ79, FGKP18}.

Fix a Borel subgroup $B$ and let $P \subset \RG$ be a standard parabolic subgroup with Levi decomposition $P=LU$, and let $\psi : U(F)\backslash U(\mathbb{A})\to \mathbb{C}^\times$ be a global unitary character. 
Given any automorphic form $\varphi\in \pi$ one can consider the following function on $\RG(\ads)$:
\begin{equation}
\mathcal{F}_{U}(\varphi, \psi; g)=\intl_{U(F)\backslash U(\mathbb{A})} \varphi(ug)\psi^{-1}(u) \, du\,.
\end{equation}
This can be viewed as a Fourier coefficient of the automorphic form $\varphi$ with respect to the unipotent subgroup $U$. Fourier coefficients of automorphic forms carry a wealth of arithmetic and representation-theoretic information. For example, in the case of classical modular forms on the upper half-plane, Fourier coefficients are well-known to encode information about the count of rational points on elliptic curves. On the other hand, for higher rank Lie groups  their arithmetic content is not always transparent, but they always encode important representation-theoretic information. Langlands showed that the constant terms in the Fourier expansion of Eisenstein series provide a   source for automorphic $L$-functions \cite{L67}, and Shahidi extended this method (now called the Langlands-Shahidi method) to include also the non-constant Fourier coefficients \cite{Sha78,Sha81}.

Theta correspondences provide realizations of Langlands functorial transfer between automorphic representations $\pi$ and $\pi'$ of two different groups $\RG$ and $\RG'$. In this context automorphic forms attached to {\it minimal} automorphic representations play a key role \cite{G06}. The wave front set of a minimal representation $\pi_\text{min}$ of a group $\RG$ is the closure of the smallest non-trivial nilpotent coadjoint orbit $\mathcal{O}_\text{min}$ of $\RG$ \cite{J76,KS90}. The automorphic realizations of minimal representations are characterized by having very few non-vanishing Fourier coefficients \cite{GRS97}. Conversely, the method of descent \cite{GRS11} can be viewed as an inverse to the functorial lifting, in which an automorphic representation of a general linear group $\GL_n$ is transferred to a representation of a smaller classical group $\RG$. Also in this case do Fourier coefficients of small representations enter in a crucial way. 

In general it is a difficult problem to obtain explicit formulas for Fourier coefficients for higher rank groups, let alone settle the question of whether an automorphic form $\varphi$ can be reconstructed from only a subset of its Fourier coefficients.
For cusp forms on $\GL_n$ this is possible due to the Piatetski-Shapiro--Shalika formula \cite{S74,PS79} that allows to reconstruct $\varphi$ from its Whittaker coefficients; i.e. the Fourier coefficients with respect to the unipotent radical $N$ of the Borel subgroup $B \subset \RG$. These coefficients are sums of Eulerian Whittaker coefficients on subgroups of $\RG$, and their non-archimedean parts can be obtained from the Casselman--Shalika formula \cite{Sh76,CS80} as described in \cite{FGKP18}. However, even if this gives us complete control of the Fourier expansion with respect to $N$ it does not automatically give us a way of calculating an arbitrary Fourier coefficient $\mathcal{F}_{U}(\varphi, \psi; g)$ with respect to some other unipotent subgroup $U$. Such coefficients play an important role in the construction of $L$-functions, and also carry information about non-perturbative effects in string theory as described in section~\ref{sec:string}.

Expanding upon the classic results of \cite{GRS97}, Miller and Sahi proved in \cite{MS12} that for automorphic forms $\varphi$ attached to a minimal representation $\pi_\text{min}$ of $E_6$ and $E_7$, any Fourier coefficient $\mathcal{F}_{U}(\varphi, \psi; g)$ is completely determined by maximally degenerate Whittaker coefficients of the form
\begin{align}
\label{maxdeg}
\intl_{N(F)\backslash N(\mathbb{A})} \varphi(ng)\psi_\alpha (n)^{-1} \,dn\, ,
\end{align}
where $\psi_\alpha$ is non-trivial only on the one-parameter subgroup of $N$ corresponding to the simple root $\alpha$. This result maybe viewed as a global version of the classic results of Moeglin-Waldspurger in the non-archimedean setting \cite{MW87}, and Matumoto in the archimedean setting \cite{Mat87}.

For the special cases of $\SL_3$ and $\SL_4$ the Miller--Sahi results were generalized in \cite{GKP16} (following related results in \cite{FKP14}) to automorphic forms attached to a next-to-minimal automorphic representation $\pi_\text{ntm}$. It was shown that any Fourier coefficient is completely determined by \eqref{maxdeg} and coefficients of the following form
\begin{equation} 
\int\limits_{N(F)\backslash N(\mathbb{A})} \varphi(ng)\psi_{\alpha, \beta} (n)^{-1} \,dn\,, 
\label{ntmdeg}
\end{equation}
where $\psi_{\alpha, \beta}$ is only supported on strongly orthogonal pairs of simple roots $(\alpha, \beta)$ which here reduces to that $[E_\alpha, E_\beta]=0$~\cite{Knapp}. The main goal of the present paper is to use the techniques of \cite{JL13,JLS16,GGS15}, in particular the notion of \emph{Whittaker pair}, to extend the above results to all of $\SL_n$. 

\subsection{Summary of results}

We now summarize our main results. In the rest of this paper we will consider $\SL_n$ for $n \geq 5$ where we have fixed a Borel subgroup with the unipotent radical $N$. 
Let also $T$ be the diagonal elements of $\SL_n(F)$ and, for a character $\psi_0$ on $N$, let $T_{\psi_0}$ be the stabilizer of $\psi_0$ under the action $[h.\psi_0](n) = \psi_0(h n h^{-1})$ for $h \in T$. \label{tpsi0}

Define
\begin{equation}
    \label{eq:Gamma-i}
    \Gamma_i(\psi_0) \defeq
    \begin{cases}
        (\SL_{n-i}(F))_{\hat Y} \bs \SL_{n-i}(F) & 1 \leq i \leq n-2 \\
        (T_{\psi_0} \cap T_{\psi_{\alpha_{n-1}}}) \bs T_{\psi_0} & i = n-1\, , 
    \end{cases}
\end{equation}
where $(\SL_{n-i}(F))_{\hat Y}$ is the stabilizer of $\hat Y = {}^t(1, 0, 0, \ldots, 0) \in \Mat_{(n-i)\times 1}(F)$ and consists of elements 
$\begin{psmallmatrix}
    1 & \xi \\
    0 & h
\end{psmallmatrix}$, with $h \in \SL_{n-i-1}(F)$ and $\xi \in \Mat_{1 \times (n-i-1)}(F)$.
When $\psi_0 = 1$ we write $\Gamma_i(1)$ as $\Gamma_i$.

Similarly, let $(\SL_j(F))_{\hat X}$ be the stabilizer of $\hat X = (0, \ldots, 0, 1) \in \Mat_{1\times j}(F)$ with respect to multiplication on the right, $\psi_0$ a character on $N$, and define
\begin{equation}
    \label{eq:Lambda-j}
    \Lambda_j(\psi_0) \defeq
    \begin{cases}
        (\SL_j(F))_{\hat X} \bs \SL_j(F) & 2 < j \leq n \\
        (T_{\psi_0} \cap T_{\psi_{\alpha_1}}) \bs T_{\psi_0} & j = 2 \, ,
    \end{cases}
\end{equation}
where, again, we denote $\Lambda_j(1) = \Lambda_j$.

Define also the embeddings $\iota, \hat \iota : \SL_{n-i} \to \SL_n$ for any $0 \leq i \leq n-1$ as
\begin{equation}
   \label{eq:iota}
    \iota(\gamma) =
    \begin{pmatrix}
        I_{i} & 0 \\
        0 & \gamma
    \end{pmatrix} \qquad
    \hat\iota(\gamma) =
    \begin{pmatrix}
        \gamma & 0 \\
        0 & I_{i}
    \end{pmatrix}
    \, ,
\end{equation}
where we for brevity suppress their dependence on $i$. Note that for $i = 0$, they are just the identity maps for $\SL_n$.

The following theorem expands an automorphic form $\varphi$ attached to a small automorphic representation of $\SL_n$ in terms of highly degenerate Whittaker coefficients similar to how cusp forms on $\GL_n$ can be expanded in terms of Whittaker coefficients with the Piatetski-Shapiro--Shalika formula \cite{S74, PS79}. Expansion of non-cuspidal automorphic forms on $\GL_n$ in terms of Whittaker coefficients were discussed in~\cite{Yukie,JL13}.

\begin{mainthm}
    \label{thm:varphi}
    Let $\pi$ be a minimal or next-to-minimal irreducible automorphic representation of $\SL_n(\BA)$, and let $\varphi \in \pi$.
    \begin{enumerate}[label=\textnormal{(\roman*)}, leftmargin=0cm,itemindent=1.75\parindent,labelwidth=\itemindent,labelsep=0mm,align=left]
        \item \label{itm:varphi-min} If $\pi=\pi_{min}$, then $\varphi$ has the expansion
            \begin{equation}
                \varphi(g) = \intl_{N(F)\backslash N(\mathbb{A})} \varphi(ng) \, dn + \sum_{i=1}^{n-1} \sum_{\gamma \in \Gamma_{i}} \, \intl_{N(F)\backslash N(\mathbb{A})} \varphi(n \iota(\gamma) g) \psi^{-1}_{\alpha_i}(n) \, dn \, .
            \end{equation}
        \item \label{itm:varphi-ntm} If $\pi=\pi_{ntm}$,  then $\varphi$ has the expansion
            \begin{multline}
                \varphi(g) = \intl_{N(F) \bs N(\ads)} \varphi(vg) \, dv + \sum_{i=1}^{n-1} \sum_{\gamma \in \Gamma_i} \intl_{N(F)\bs N(\ads)} \varphi(v \iota(\gamma) g) \psi^{-1}_{\alpha_i}(v) \, dv +{} \\
                + \sum_{j=1}^{n-3}\sum_{i=j+2}^{n-1} \sum_{\substack{\gamma_i \in \Gamma_i(\psi_{\alpha_j}\!) \\ \gamma_j \in \Gamma_j}} \intl_{N(F)\bs N(\ads)} \varphi(v \iota(\gamma_i) \iota(\gamma_j) g) \psi^{-1}_{\alpha_j, \alpha_i} (v) \, dv \,.
            \end{multline}
    \end{enumerate}
\end{mainthm}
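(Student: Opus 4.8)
\emph{Overall strategy.} The plan is to prove both formulas by an iterated Fourier expansion along the successive rows of $N$, in the spirit of the Piatetski-Shapiro--Shalika unfolding, carried out entirely inside $\SL_n$ and cut off using the smallness of $\pi$. For $1\le k\le n-1$ write $C_k\subset N$ for the abelian subgroup of matrices differing from the identity only in the entries $(k,k+1),\dots,(k,n)$; then $N=C_1C_2\cdots C_{n-1}$, the product $C_1\cdots C_k$ is the unipotent radical of the standard parabolic with Levi $\GL_1^{k}\times\GL_{n-k}$ (intersected with $\SL_n$), and $C_1\cdots C_{k-1}$ is normal in $C_1\cdots C_k$ with abelian quotient $C_k$. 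I would work throughout with partial Fourier coefficients $g\mapsto \mathcal F_{C_1\cdots C_k}(\varphi,\psi_0;g)\defeq\int\varphi(ug)\psi_0^{-1}(u)\,du$ over $(C_1\cdots C_k)(F)\backslash(C_1\cdots C_k)(\BA)$, where $\psi_0$ is a character of $N$ supported on a subset of the simple-root entries $(\ell,\ell+1)$, and expand such a coefficient in the next row $C_{k+1}$, starting from $\varphi$ itself ($k=0$, $\psi_0=1$).

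\emph{Single expansion step.} Given $\Phi=\mathcal F_{C_1\cdots C_{k-1}}(\varphi,\psi_0;\cdot)$, I would Fourier-expand it along $C_k\cong\BA^{\,n-k}$. The group acting on $C_k$ by conjugation — the Levi factor $\GL_{n-k}$ for $k\le n-2$, and the maximal torus $T$ for $k=n-1$ — has exactly two orbits on the characters of $C_k$, the trivial one and the orbit of $\psi_{\alpha_k}$, since $\GL_{n-k}(F)$ is transitive on nonzero vectors of $F^{\,n-k}$ and $\alpha_{n-1}\colon T(F)\to F^\times$ is surjective. As $\Phi$ is left-invariant under the $F$-points of the stabilizer of $\psi_0$ in the parabolic $P_{[1^{k-1},n-k+1]}$ — in particular under the pertinent copy of $\GL_{n-k}(F)$, and, at the last step, under $T_{\psi_0}(F)$ — collecting the coefficients attached to the two orbits yields
\[
\Phi(g)=\mathcal F_{C_1\cdots C_k}(\varphi,\psi_0;g)+\sum_{\gamma}\mathcal F_{C_1\cdots C_k}(\varphi,\psi_0\psi_{\alpha_k};\iota(\gamma)\,g),
\]
where, for $k\le n-2$, $\gamma$ runs over $(\SL_{n-k}(F))_{\hat Y}\backslash\SL_{n-k}(F)=\Gamma_k(\psi_0)$ — the double coset reducing from $\GL_{n-k}$ to $\SL_{n-k}$ because the latter is already transitive on the $\GL_{n-k}(F)$-orbit of $\psi_{\alpha_k}$ — and $\iota$ is the embedding \eqref{eq:iota}, while for $k=n-1$ the prefactor $\iota(\gamma)$ is replaced by the torus element $\gamma\in(T_{\psi_0}\cap T_{\psi_{\alpha_{n-1}}})\backslash T_{\psi_0}=\Gamma_{n-1}(\psi_0)$. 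This reproduces exactly the group elements and index sets appearing in the statement.

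\emph{Termination.} To cut the iteration off I would invoke that $\mathrm{WF}(\pi_{min})=\overline{\mathcal O_{[2,1^{n-2}]}}$ and $\mathrm{WF}(\pi_{ntm})=\overline{\mathcal O_{[2^2,1^{n-4}]}}$, together with the vanishing of every Fourier coefficient of $\varphi$ attached to a nilpotent orbit not contained in $\mathrm{WF}(\pi)$. Iterating the single step from $\varphi$, the pure constant-term branch descends row by row to the full constant term $\int_N\varphi(ng)\,dn$, reached at $k=n-1$; whenever a generic factor $\psi_{\alpha_\ell}$ is produced the cumulative character corresponds to a nilpotent $E_{\alpha_j}+\cdots$, and in the minimal case one such factor — resp.\ in the next-to-minimal case two, necessarily strongly orthogonal ones $E_{\alpha_j}+E_{\alpha_i}$ with $i\ge j+2$, since two adjacent simple roots already give $\mathcal O_{[3,1^{n-3}]}$ — exhausts the wave front set: any further generic coefficient is then attached to a nilpotent orbit strictly dominating $\mathrm{WF}(\pi)$ (a second, mutually orthogonal, block already gives $\mathcal O_{[2^2,1^{n-4}]}$, too large for $\pi_{min}$; a third gives an orbit dominating $\mathcal O_{[2^2,1^{n-4}]}$, too large for $\pi_{ntm}$), and therefore vanishes. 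Hence in each branch only constant terms survive in the remaining rows once the permitted number of generic factors has been produced; gathering $C_1\cdots C_{n-1}=N$ turns the residual integrals into $\int_{N(F)\backslash N(\BA)}\varphi(v\,\iota(\gamma_i)\iota(\gamma_j)g)\,\psi_{\alpha_j,\alpha_i}^{-1}(v)\,dv$ and its degenerations, with the outer coset sum $\Gamma_j$ and the inner one $\Gamma_i(\psi_{\alpha_j})$ coming from the two generic steps — the $\psi_{\alpha_j}$-twist being invisible until the final torus step, which is exactly why $\Gamma_i(\psi_0)$ depends on $\psi_0$ only for $i=n-1$. Collecting the leaves of this finite tree yields parts~(i) and~(ii); the known cases $n=3,4$ of \cite{GKP16} and a count that each listed term occurs exactly once serve as consistency checks.

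\emph{Main obstacle.} The hard part will be the orbit-vanishing in exactly the form required: the partial coefficients $\mathcal F_{C_1\cdots C_k}(\varphi,\psi_0;\cdot)$ are integrals over unipotent radicals of parabolics rather than over the positive part of an $\mathfrak{sl}_2$-grading, so one must first match them — by root-exchange arguments of the kind used in \cite{JL13,JLS16,GGS15}, via the notion of Whittaker pair — with (possibly degenerate) Whittaker pairs attached to the cumulative nilpotent, so that the vanishing theorem applies; this is also where the constraint $i\ge j+2$ and the precise shape of the index sets get pinned down. The remaining points are more routine: the Haar-measure bookkeeping under the decompositions $C_1\cdots C_k=(C_1\cdots C_{k-1})\rtimes C_k$ and under the unfolding of $N$, and checking that each generic-branch function $\mathcal F_{C_1\cdots C_j}(\varphi,\psi_{\alpha_j};\cdot)$ inherits left-invariance under the $F$-points of $\Stab(\psi_{\alpha_j})$, so that the subsequent steps run with the correctly twisted index sets $\Gamma_i(\psi_{\alpha_j})$.
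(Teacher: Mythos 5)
Your proposal is correct and follows essentially the same route as the paper: a row-by-row Fourier expansion of partial coefficients over the groups $V_i=C_1\cdots C_i$, the two-orbit analysis of characters on each new row under $\SL_{n-i}(F)$ (and under $T_{\psi_0}(F)$ for the last row), and termination via the vanishing theorem for Fourier coefficients attached to orbits outside the wave-front set. The only point worth noting is that the "main obstacle" you flag dissolves: the paper simply exhibits an explicit (non-neutral) rational semisimple element $s_{V_i}$ with $N_{s_{V_i}}=V_i$, so the partial coefficients are already Fourier coefficients of Whittaker pairs and Theorem C of \cite{GGS15} applies directly, with no root-exchange needed for this theorem.
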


Note that the Whittaker coefficients in the last sum of case \ref{itm:varphi-ntm} have characters supported on two strongly orthogonal (or commuting) simple roots. 
As mentioned in section~\ref{sec:background} and further described in \cite{FGKP18}, the Whittaker coefficients are sums of Eulerian Whittaker coefficients on smaller subgroups $\SL_n$, whose non-archimedean parts can be computed by the Casselman--Shalika formula \cite{S74, CS80}. 
The more degenerate a Whittaker coefficient is the smaller the subgroup we need to consider (and on which character becomes generic). Thus, maximally degenerate Whittaker coefficients, and the ones with characters supported on two commuting simple roots become particularly simple and are, in principle, one, or a product of two, known $\SL_2$ Whittaker coefficients respectively.

Next, we consider Fourier coefficients on maximal parabolic subgroups. 
Let $P_m$ the maximal parabolic subgroup of $SL_n$ with respect to the simple root $\alpha_m$ and let $U = U_m$ be the unipotent radical and $L_m$ be the corresponding Levi subgroup which stabilizes $U_m$ under conjugation. 
For an element $l\in L_m(F)$ and a character $\psi_U$ on $U_m$ we obtain another character $\psi_U^l$ by conjugation as
\begin{equation}
    \psi_U^l(u) = \psi_U(lul^{-1}) \, .
\end{equation} 

Fourier coefficients $\mathcal{F}_U$ with conjugated characters are related by $l$-translates of their arguments
\begin{align}
    \label{eq:Fourier-L-conjugation}
    \mathcal{F}_U(\varphi, \psi_U^l; g) &= \intl_{U(F) \bs U(\ads)} \varphi(ug) \psi_U^{-1}(lul^{-1}) \, du \\
    &= \intl_{U(F) \bs U(\ads)} \varphi(l^{-1}u'lg) \psi_U^{-1}(u') \, du' = \mathcal{F}_U(\varphi, \psi_U; lg) \, ,
\end{align}
where we have first made the variable substitution $u' = lul^{-1}$ and then used the automorphic invariance since $l \in L_m(F)$. 
This means that we only need to compute the Fourier coefficients of one character per $L_m(F)$-orbit.

We show in section~\ref{sec:fourier} that a character can be parametrized by an element $y \in \lie g$ by \eqref{eq:character} denoted by $\psi_y$, which, under conjugation, satisfies $\psi_y^l = \psi_{l^{-1} y l}$ according to \eqref{eq:character}.

In section~\ref{sec:thmB} and appendix~\ref{app:levi-orbits}, we describe these orbits  following~\cite{N11} and construct standard characters $\psi_{y(Y_r(d))}$ on $U_m$ based on anti-diagonal $(n-m) \times m$ rank $r$ matrices $Y_r(d)$, where $d \in F^\times/(F^\times)^2$ for $n = 2r = 2m$ and $d = 1$ otherwise (in which case we suppress the $d$), and $y(Y_r(d))$ is defined as
\begin{equation}
   y(Y_r(d)) =
   \begin{pmatrix}
        0_m & 0\\
        Y_r(d) & 0_{n-m}
    \end{pmatrix}\,.
\end{equation}

Let $\pi$ be a minimal or next-to-minimal automorphic representations, and
\begin{equation}
    r_\pi = 
    \begin{cases}
        1 & \text{if $\pi$ is a minimal automorphic representation} \\
        2 & \text{if $\pi$ is a next-to-minimal automorphic representation.} 
    \end{cases}
\end{equation}

We will show that only the characters with rank $r \leq r_\pi \leq 2$ give non-vanishing Fourier coefficients.
Let us briefly define the characters with rank $r \leq 2$ which will be used in the next theorem, postponing a more general definition to section~\ref{sec:thmB}.
The rank zero character is the trivial character $\psi_{y(Y_0)} = 1$ and the corresponding Fourier coefficient has been computed in \cite{MW95} as reviewed in \cite{FGKP18}.
The rank one character is $\psi_{y(Y_1)} = \psi_{\alpha_m}$ and the rank two character can be defined as follows
\begin{equation}
    \label{eq:psi-Y2}
    \psi_{y(Y_2)}(u) = \psi(u_{m,m+1} + u_{m-1, m+2}) \qquad u \in U_m(\ads) \, .
\end{equation}

The following theorem, together with the known constant term, then allows us to compute any Fourier coefficient with respect to the unipotent radical of a maximal parabolic subgroups for automorphic forms attached to minimal and next-to-minimal automorphic representations in terms of Whittaker coefficients.

\begin{mainthm}
    \label{thm:max-parabolic}
    Let $\pi$ be a minimal or next-to-minimal irreducible automorphic representation of $\SL_n(\ads)$, and let $r_\pi$ be $1$ or $2$ respectively (which denotes the maximal rank of the character matrix $Y_r$).
    Let also, $\varphi \in \pi$, $P_m$ be the maximal parabolic subgroup described above with its associated subgroups $U\equiv U_m$ and $L_m$, and let $\psi_U$ be a non-trivial character on $U_m$ with Fourier coefficient 
    $$\mathcal{F}_U(\varphi, \psi_U; g) = \int_{U_m(F)\bs U_m(\ads)} \varphi(ug) \psi_U^{-1}(u) \, du\, .$$ 
    Then, there exists an element $l \in L_m(F)$ such that 
    $$\mathcal{F}_U(\varphi, \psi_U; g) = \mathcal{F}_U(\varphi, \psi_{y(Y_r(d))}; lg)$$
     for some standard character $\psi_{y(Y_r(d))}$ described above and in the proof.

    Additionally, all $\mathcal{F}_U(\varphi, \psi_{y(Y_r(d))}; lg)$ for $r > r_\pi$ vanish identically.
    The remaining (non-constant) coefficients can be expressed in terms of Whittaker coefficients on $N$ as follows.
    \begin{enumerate}[label=\textnormal{(\roman*)}, leftmargin=0cm,itemindent=1.75\parindent,labelwidth=\itemindent,labelsep=0mm,align=left]
        \item \label{itm:max-parabolic-min} If $\pi = \pi_\text{min}$:
            \begin{flalign}
                \qquad \mathcal{F}_U(\varphi, \psi_{y(Y_1)}; g) &= \intl_{N(F)\bs N(\ads)} \varphi(ng) \psi_{\alpha_m}^{-1}(n) \, dn \, . &
            \end{flalign}
        \item \label{itm:max-parabolic-ntm-rank1} If $\pi = \pi_\text{ntm}$:
            \begin{flalign}
                \qquad \mathcal{F}_U(\varphi, \psi_{y(Y_1)}; g) &=
                \begin{multlined}[t]
                    \intl_{[N]} \varphi(ng) \psi_{\alpha_m}^{-1}(n) \, dn +{} \\ 
                + \sum_{j=1}^{m-2} \sum_{\gamma \in \Lambda_j(\psi_{\alpha_m}\!)}\, \intl_{[N]} \varphi(n\hat\iota(\gamma) g) \psi_{\alpha_j, \alpha_m}^{-1}(n) \, dn +{} \\ 
                + \sum_{i=m+2}^{n-1} \sum_{\gamma \in \Gamma_i(\psi_{\alpha_m}\!)} \, \intl_{[N]} \varphi(n \iota(\gamma) g) \psi_{\alpha_m, \alpha_i}^{-1}(n) \, dn \, . 
            \end{multlined}&
        \end{flalign}
        \item \label{itm:max-parabolic-ntm-rank2} If $\pi = \pi_\text{ntm}$:
            \begin{flalign}
                \qquad \mathcal{F}_U(\varphi, \psi_{y(Y_2)}; g) &= \intl_{C(\BA)}\intl_{N(F)\bs N(\ads)} \varphi (n\omega cg)\psi_{\alpha_1,\alpha_3}^{-1}(n) \,dn\,dc\, , &
            \end{flalign}
            where $\omega$ is the Weyl element mapping the torus elements 
            $$(t_1, t_2, \ldots, t_n) \mapsto (t_{m-1}, t_{m+2}, t_m, t_{m+1}, t_1, t_2, \ldots, t_{m-2}, t_{m+3}, t_{m+4}, \ldots, t_n)\,,$$
            and the subgroup $C$ of $U_m$ will be detailed in the proof in section~\ref{sec:thmB}.
    \end{enumerate}
\end{mainthm}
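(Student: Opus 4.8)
\emph{Overview and Step 1 (normalising the character).} The plan is to carry out three reductions: normalise an arbitrary character of $U_m$ to one of the standard forms $\psi_{y(Y_r(d))}$; show that every character of rank $r>r_\pi$ gives a vanishing coefficient; and unfold the remaining coefficients into Whittaker coefficients on $N$ by feeding in Theorem~\ref{thm:varphi}. For the first reduction, since $U_m$ is abelian a character of $U_m(F)\bs U_m(\ads)$ is a pairing with a matrix in $\Mat_{(n-m)\times m}(F)$, and $L_m(F)$ acts on these matrices by the two-sided multiplication action of $\GL_m(F)\times\GL_{n-m}(F)$, the two determinants being tied together in the square case $n=2m$. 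Over a field these orbits are classified by the rank, together with a class $d\in F^\times/(F^\times)^2$ when $n=2m$; this is the classification of~\cite{N11}, worked out for the present set-up in appendix~\ref{app:levi-orbits}. Taking the anti-diagonal representatives $Y_r(d)$ and invoking \eqref{eq:Fourier-L-conjugation} produces the required $l\in L_m(F)$ with $\mathcal{F}_U(\varphi,\psi_U;g)=\mathcal{F}_U(\varphi,\psi_{y(Y_r(d))};lg)$.

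\emph{Step 2 (vanishing for $r>r_\pi$).} The matrix $y(Y_r(d))$ is nilpotent with $y^2=0$ and rank $r$, hence lies in the orbit $\mathcal{O}=[2^r,1^{n-2r}]$, and $\mathcal{F}_U(\varphi,\psi_{y(Y_r(d))};g)$ is the Fourier coefficient attached to the Whittaker pair $(h_m,y(Y_r(d)))$, with $h_m$ the rational semisimple element grading $U_m$. By the theory of Whittaker pairs in \cite{GGS15,JLS16}, if this coefficient is non-zero for some $\varphi\in\pi$ then $\mathcal{O}$ lies in the Whittaker support of $\pi$, hence in the closure of its wave-front set, which is the closure of $[2,1^{n-2}]$ for $\pi_{\min}$ and of $[2^2,1^{n-4}]$ for $\pi_{\text{ntm}}$. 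Therefore $r>r_\pi$ forces $\mathcal{F}_U(\varphi,\psi_{y(Y_r(d))};g)\equiv 0$.

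\emph{Step 3 (the explicit formulas).} For $r\le r_\pi$ I substitute the expansion of Theorem~\ref{thm:varphi} for $\varphi(ug)$ into $\int_{U_m(F)\bs U_m(\ads)}\varphi(ug)\psi_{y(Y_r(d))}^{-1}(u)\,du$. The constant-term piece contributes $0$ because $u\mapsto\int_{[N]}\varphi(nug)\,dn$ is $N(\ads)$-invariant while $\psi_{y(Y_r(d))}$ is a non-trivial character of $U_m$. In each remaining term one moves the embedded element $\iota(\gamma)$ (or $\iota(\gamma_i)\iota(\gamma_j)$) to the right of $u$ (using a Bruhat reduction when that element straddles $P_m$) and uses that a Whittaker coefficient on $N$ transforms by its character under left $N(\ads)$-translation; the leftover $U_m$-integral then collapses to $\int_{U_m(F)\bs U_m(\ads)}\chi_\gamma(u)\psi_{y(Y_r(d))}^{-1}(u)\,du$ for an explicit character $\chi_\gamma$ of $U_m$, which is $0$ unless $\chi_\gamma=\psi_{y(Y_r(d))}$. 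This condition picks out exactly those terms of Theorem~\ref{thm:varphi} whose supporting simple roots include $\alpha_m$, and the restriction it imposes on the coset representative $\gamma$ reproduces the stabiliser quotients $\Gamma_i(\psi_{\alpha_m})$ and $\Lambda_j(\psi_{\alpha_m})$, including the replacement of $\iota$ by $\hat\iota$ according to whether the second supporting root sits in the $\GL_{n-m}$- or the $\GL_m$-block of $L_m$. For $\pi_{\min}$ only the single-root term with trivial coset survives, giving~\ref{itm:max-parabolic-min}; for $\pi_{\text{ntm}}$ with $r=1$, the two-root terms with $i=m$ and with $j=m$ survive as well, giving~\ref{itm:max-parabolic-ntm-rank1}. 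For the rank-$2$ character~\eqref{eq:psi-Y2} one first conjugates $U_m$ by the Weyl element $\omega$, chosen precisely to carry the two supporting roots $e_m-e_{m+1}$ and $e_{m-1}-e_{m+2}$ to the strongly orthogonal simple roots $\alpha_3$ and $\alpha_1$; the root subgroups of $U_m$ that $\omega$ moves out of $N$ (and on which the character is trivial) make up the subgroup $C$, and when Theorem~\ref{thm:varphi} is applied to the $\omega^{-1}g$-translate the sum over the relevant coset combines with the $[C]$-integration to produce the full adelic integral $\int_{C(\ads)}$, which yields~\ref{itm:max-parabolic-ntm-rank2}.

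\emph{Main difficulty.} I expect the rank-$2$ case to be the real work: identifying $C$ exactly, justifying the collapse of $[C]$ to $C(\ads)$ with no convergence or overcounting issue, and checking that after the $\omega$-conjugation no Whittaker term other than $\psi_{\alpha_1,\alpha_3}$ contributes. The rank-$1$ next-to-minimal case, though lighter, still needs the careful conjugation bookkeeping that isolates $\Lambda_j(\psi_{\alpha_m})$, $\Gamma_i(\psi_{\alpha_m})$ and the $\iota$/$\hat\iota$ dichotomy; and Step~2 relies on the Whittaker-pair comparison of \cite{GGS15,JLS16} being available for the general grading element $h_m$, not only for neutral ones.
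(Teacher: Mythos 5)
Your Steps 1 and 2 are sound and coincide with the paper's argument: the reduction to the standard characters $\psi_{y(Y_r(d))}$ is the $L_m(F)$-orbit classification of appendix~\ref{app:levi-orbits} combined with \eqref{eq:Fourier-L-conjugation}, and the vanishing for $r>r_\pi$ is exactly the observation that the coefficient is $\CF_{s_m,y(Y_r(d))}(\varphi)$ for the (generally non-neutral) Whittaker pair $(s_m,y(Y_r(d)))$, to which theorem~\ref{thm:ggsglobal} applies; that theorem is stated for arbitrary Whittaker pairs, so the concern you raise at the end about neutrality is already covered.

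The genuine gap is in Step 3, which is where the content of the theorem lies. Your plan is to substitute the expansion of theorem~\ref{thm:varphi} into $\int_{[U_m]}\varphi(ug)\psi_{y(Y_r)}^{-1}(u)\,du$ and kill terms by orthogonality of characters on $[U_m]$. That mechanism only works for the terms whose conjugating element $\iota(\gamma)$ normalizes $U_m$, i.e.\ $\gamma\in\Gamma_i$ with $i\ge m$. For $i<m$ (and for the two-root terms of theorem~\ref{thm:varphi} containing a factor $\iota(\gamma_j)$ with $j<m$), the element $\iota(\gamma)u\iota(\gamma)^{-1}$ is in general not even upper triangular, so the Whittaker coefficient does not transform by a character and "the leftover $U_m$-integral collapses to a character integral" is simply false for those terms; the parenthetical "Bruhat reduction" is carrying all of the work without any argument. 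This is precisely where the second family in part~\ref{itm:max-parabolic-ntm-rank1} — with the cosets $\Lambda_j(\psi_{\alpha_m})$ and the embedding $\hat\iota$ rather than $\iota$ — would have to emerge from a re-parametrization of the straddling terms, and you assert this rather than prove it. The paper avoids the problem entirely: it never feeds theorem~\ref{thm:varphi} into the coefficient, but expands the $[U_m]$-integral directly, first column by column (lemma~\ref{lem:col-expansion} and lemma~\ref{TheoremB:Lemma1}, with non-allowed modes killed by theorem~\ref{thm:ggsglobal} applied to the pairs $(s_{U_m^i},u)$), then row by row via corollary~\ref{cor:ntm-row}; the $\Lambda_j(\psi_{\alpha_m})$ sums are produced by the column expansion itself. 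For the rank-2 case your identification of $C$ as the root subgroups of $U_m$ sent to negative roots by $\omega$ is correct, but the proposed mechanism for obtaining $\int_{C(\BA)}$ is not: conjugating by $\omega$ first takes the integration domain outside $N$, so the row-expansion machinery (and theorem~\ref{thm:varphi}) is not applicable there. In the paper the non-compact $C(\BA)$-integration is created \emph{before} the $\omega$-conjugation by two applications of the root-exchange lemma~\ref{exchangeroots} with the quadruples $(U_m',C_1,R_1,\psi_{y(Y_2)})$ and $(U_m'',C_2,R_2,\psi_{y(Y_2)})$, whose hypotheses must be checked; only then does $\omega$ carry the remaining domain into $N$ and the character onto $\psi_{\alpha_1,\alpha_3}$. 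As it stands, parts \ref{itm:max-parabolic-ntm-rank1} and \ref{itm:max-parabolic-ntm-rank2} (and even the vanishing of the $i<m$ terms in part \ref{itm:max-parabolic-min}) are unproven in your proposal.
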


As described in detail in section~\ref{sec:fourier}, $F$-rational nilpotent orbits of $\SL_n$ are characterized by $(\ul p, d)$ where $\ul p$ is a partition of $n$ and $d \in F^\times/(F^\times)^k$ with $k = \gcd(\ul p)$. 
If $k = 1$ we will often suppress the extra $d = 1$ and only write out the partition.

There we will also see that, for each orbit, there are natural choices of unipotent subgroups and characters related by conjugations with elements $\gamma \in \SL_n(F)$ and the corresponding Fourier coefficients \eqref{eq:orbit-coefficient} are related by $\gamma$-translates of their arguments. 

The orbits may be partially ordered and the minimal and next-to-minimal orbits are described by the partitions $[21^{n-2}]$ and $[2^21^{n-4}]$, respectively. 
Besides the trivial partition, these are the only partitions whose associated Fourier coefficients are non-vanishing for $\varphi$ in a minimal or next-to-minimal irreducible automorphic representation. 
In section~\ref{sec:orbit-coefficients} we choose standard representatives for these orbits and specify the associated standard Fourier coefficients which we denote by $\mathcal{F}^{[211\ldots]}$ and $\mathcal{F}^{[221\ldots]}$.
For $n\geq 5$, we have that the trivial, minimal and next-to-minimal orbit all have $k=1$.

The following theorems express these standard Fourier coefficients associated with the two partitions above in terms of Fourier coefficients on maximal parabolic subgroups that, in turn, were written in terms of Whittaker coefficients in theorem~\ref{thm:max-parabolic}.

\begin{mainthm}
    \label{thm:min-coeff} 
    Let $\pi$ be an irreducible automorphic representation of $\SL_n(\ads)$, $\varphi \in \pi$ and $Y = \Pi_{i=3}^n X_{e_i - e_2}$. 
    Then,
    \begin{equation}
        \mathcal{F}^{[211\ldots]}(\varphi; g) = \sum_{y \in Y(F)} \, \intl_{U_1(F)\bs U_1(\ads)} \varphi(u y^{-1} g) \psi_{\alpha_1}^{-1}(u) \, du  \, , 
    \end{equation}
    where $U_1$ is the unipotent radical of $P_1$ consisting of the first row of $N$. The Fourier coefficient $\mathcal{F}^{[211\ldots]}$ is for a particular standard choice of orbit representative detailed in the proof; all other choices are related simply by $\SL_n(F)$ translation.
\end{mainthm}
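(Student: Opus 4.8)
I would begin by unpacking, from Section~\ref{sec:orbit-coefficients}, the precise shape of the standard coefficient: $\mathcal{F}^{[211\ldots]}(\varphi;g)=\int_{[V]}\varphi(vg)\psi_X^{-1}(v)\,dv$ for a fixed standard representative $X$ of the minimal orbit $[21^{n-2}]$, where $V$ is the unipotent subgroup determined (through a Whittaker pair) by a neutral element $h$ of an $\mathfrak{sl}_2$-triple through $X$ together with a fixed polarization of the weight-one space, and $\psi_X(v)=\psi(\langle X,\log v\rangle)$. Since any two standard representatives are $\SL_n(F)$-conjugate and, by \eqref{eq:orbit-coefficient}, give coefficients differing only by a translation of $g$, I may fix the representative most convenient for the computation; this also immediately settles the last sentence of the theorem. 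The convenient choice is the one for which $\psi_X$ restricts to $\psi_{\alpha_1}$ on $X_{\alpha_1}=X_{e_1-e_2}$, and for which, in the grading defined by $h$, the degree-one space pairs $\{X_{e_1-e_j}\}_{j\ge3}\subset\Lie(U_1)$ nondegenerately with $\Lie(Y)=\bigoplus_{i=3}^n\Lie(X_{e_i-e_2})$ under $(u_1,u_2)\mapsto\langle X,[u_1,u_2]\rangle$; one checks that $[U_1Y,U_1Y]=[Y,U_1]=X_{\alpha_1}$, which is exactly the support of $\psi_{\alpha_1}$. Pinning down these combinatorial data is the bookkeeping that the rest of the argument relies on.

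The core of the proof is a chain of the two standard moves — Fourier expansion along an abelian unipotent subgroup, and exchange of a pair of unipotent subgroups — in the spirit of \cite{GRS11,JL13,JLS16,GGS15}, arranged so as to transport $\mathcal{F}^{[211\ldots]}$ onto the maximal-parabolic coefficient. Concretely, I would expand along the directions dual to $Y$, producing a sum over the characters of a compact abelian quotient (the trivial-character term is either absent from, or cancels in, the standard coefficient $\mathcal{F}^{[211\ldots]}$ — this is one of the points that must be checked); the nontrivial characters form a single $Y(F)$-orbit under conjugation, by the nondegenerate pairing above. Substituting $u\mapsto yuy^{-1}$ and using the left $\SL_n(F)$-invariance of $\varphi$ to pull the rational element $y$ to the left — so that $\varphi(u\,y^{-1}g)=\varphi((yuy^{-1})g)$ for $y\in Y(F)$ — this repackages the expansion as $\sum_{y\in Y(F)}$ of $y^{-1}$-translates, with the residual character on $U_1$ collapsing to precisely $\psi_{\alpha_1}$, i.e. $\mathcal{F}^{[211\ldots]}(\varphi;g)=\sum_{y\in Y(F)}\int_{[U_1]}\varphi(uy^{-1}g)\psi_{\alpha_1}^{-1}(u)\,du$. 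Each inner term is then a maximal-parabolic coefficient of the kind rewritten in terms of Whittaker coefficients in Theorem~\ref{thm:max-parabolic}.

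The step I expect to be the main obstacle is making this reorganisation watertight. One must verify that after removing the $Y$-directions the remaining integral is over exactly $U_1$ with exactly the character $\psi_{\alpha_1}$ — no leftover unipotent directions and no residual conjugation — and that the discrete index set produced by the unfolding is exactly $Y(F)$, with no redundancy; this is a careful Bruhat-type analysis inside the stabilizer groups $(\SL_{n-i}(F))_{\hat Y}$ of Section~\ref{sec:fourier}. In parallel, one must justify the convergence of the resulting sum over $Y(F)$ and the interchange of summation with the intermediate integrations; as in the Piatetski-Shapiro--Shalika formula \cite{S74,PS79} this rests on the moderate growth of $\varphi$ and the consequent decay of $\mathcal{F}_{U_1}(\varphi,\psi_{\alpha_1};\cdot)$ in the $Y$-direction. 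Once these points are secured, the identity holds for every $\varphi$, with no smallness hypothesis on $\pi$ — consistent with the theorem being stated for an arbitrary irreducible automorphic representation.
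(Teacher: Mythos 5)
Your route is essentially the paper's: start from $\CF_{s,u}$ for the neutral pair $s=(1,-1,0,\ldots,0)$, $u=J_{[21^{n-2}]}$ (so $N_s$ is just the one-parameter group of $\alpha_1$), Fourier-expand along the abelian group $X=\prod_{i\geq 3}X_{e_1-e_i}$, identify the dual of $[X]$ with $Y(F)$ via $\psi_y(x)=\psi_u([x,y])$, and then trade each character for a $y^{-1}$-translate by inserting $y^{-1}\in Y(F)$ on the left, conjugating it across, and absorbing the shift $n\mapsto n+[x,y]$ into $\psi_u$, which cancels $\psi_y^{-1}(x)$ exactly; since $XN_s=U_1$ and $\psi_u=\psi_{\alpha_1}$, the stated identity follows. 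This is the same unfolding the paper performs, and your final displayed formula is the correct one.

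Two structural points in your plan need correcting, though neither changes the outcome if you simply carry out the substitution you describe. First, the trivial character on $[X]$ is neither absent nor cancelled: it is exactly the $y=e$ term of the final sum, which is why the theorem sums over all of $Y(F)$. Second, the nontrivial characters of $[X]$ do \emph{not} form a single $Y(F)$-conjugation orbit, and no orbit/stabilizer or Bruhat-type analysis in the groups $(\SL_{n-i}(F))_{\hat Y}$ is needed (those cosets belong to the lemmas for Theorem~\ref{thm:varphi}); the whole Pontryagin dual of $[X]$ is in bijection with $Y(F)$, and every character term, the trivial one included, is converted individually into a translate, so there is no redundancy question at all. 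Relatedly, with the paper's (GGS) normalization and a neutral $s$ one has $\lie n_s=\lie g^s_{\geq 2}$, so no polarization of the weight-one space enters the definition of $\mathcal F^{[211\ldots]}$ --- the group $U_1=XN_s$ appears only after the expansion. Finally, the expansion along the compact abelian group $[X]$ is an ordinary Fourier series of the smooth function $x\mapsto\int_{[N_s]}\varphi(xng)\psi_u^{-1}(n)\,dn$, so the convergence and interchange you worry about are the standard ones and require no separate decay estimate on $\mathcal F_{U_1}$.
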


\begin{mainthm}
    \label{thm:ntm-coeff}
    Let $\pi$ be an irreducible automorphic representation of $\SL_n(\ads)$, $\varphi \in \pi$, $Y' = \prod_{i=5}^{n} X_{e_i-e_4} \prod_{i=5}^{n} X_{e_i-e_3}$ and $\omega$ be the Weyl element mapping the torus elements 
    $$(t_1, t_2, \ldots, t_n) \mapsto (t_1, t_3, t_4, t_2, t_5, t_6, \ldots, t_n)\, .$$
    Then,
    \begin{equation}
        \mathcal{F}^{[221\ldots]}(\varphi; g) = \sum_{y \in Y'(F)} \, \intl_{U_2(F)\bs U_2(\ads)} \varphi(u y^{-1} \omega g) \psi_{y(Y_2)}^{-1}(u) \, du \, ,
    \end{equation}
    where $U_2$ is the unipotent radical of $P_2$ consisting of the first two rows of $N$ and $\psi_{y(Y_2)}$ is defined in \eqref{eq:psi-Y2} with $m = 2$. The Fourier coefficient $\mathcal{F}^{[2^21\ldots]}$ is for a particular standard choice of orbit representative detailed in the proof; all other choices are related simply by $\SL_n(F)$ translation.
\end{mainthm}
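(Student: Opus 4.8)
The plan is to argue as in the proof of Theorem~\ref{thm:min-coeff}, with the minimal partition $[21^{n-2}]$ replaced by the next-to-minimal partition $[2^21^{n-4}]$ and the maximal parabolic $P_1$ replaced by $P_2$. First one pins down a workable incarnation of $\mathcal{F}^{[221\ldots]}$: choose the standard orbit representative for which the defining character in \eqref{eq:orbit-coefficient} is supported on the two commuting simple roots $\alpha_1,\alpha_3$, i.e.\ $\psi_e=\psi_{\alpha_1,\alpha_3}$, and --- using the independence of the coefficient on the choice of Whittaker pair established in \cite{GGS15,JLS16} and recalled in section~\ref{sec:fourier} --- compute with a Whittaker pair whose grading has no degree-one part, so that no choice of maximal isotropic subspace is needed and $\mathcal{F}^{[221\ldots]}(\varphi;g)=\int_{[V]}\varphi(vg)\,\psi_e^{-1}(v)\,dv$ is an honest integral over a unipotent $V$. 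Any two such choices give coefficients differing by an $\SL_n(F)$-translate, which accounts for the last sentence of the statement.

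Second, conjugate by the Weyl element $\omega$ of the statement. Since $\omega\in\SL_n(F)$ and $\varphi$ is automorphic, the substitution $v\mapsto\omega^{-1}v\omega$ gives $\mathcal{F}^{[221\ldots]}(\varphi;g)=\int_{[\omega V\omega^{-1}]}\varphi(v\,\omega g)\,\psi_{\omega e\omega^{-1}}^{-1}(v)\,dv$. A direct matrix computation identifies $\omega e\omega^{-1}$ as the rank-two nilpotent whose character on $U_2$ is $\psi_{y(Y_2)}$ with $m=2$, i.e.\ $\psi(u_{2,3}+u_{1,4})$ as in \eqref{eq:psi-Y2}, and shows that $\omega V\omega^{-1}$ is the product of a subgroup of $U_2$ carrying this character with the abelian group $Y'=\prod_{i=5}^{n}X_{e_i-e_4}\prod_{i=5}^{n}X_{e_i-e_3}$ in the indicated lower-triangular root spaces, on which $\psi_{\omega e\omega^{-1}}$ is trivial; moreover $U_2\cap Y'=1$ and $Y'$ normalizes the first factor.

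Third, run the standard procedure of Fourier expansion along abelian unipotent subgroups together with root exchange, exactly as in section~\ref{sec:fourier} following \cite{GRS11,JL13,JLS16}. Each root direction in the Lie algebra of $U_2$ not yet present in the integral is introduced by Fourier-expanding $\varphi$ along it; by the commutation relations the dual abelian group whose $F$-points index the expansion is the matching part of $Y'$, and the commutators produced land in the root spaces on which $\psi_{y(Y_2)}$ is controlled, so the hypotheses of the root-exchange lemma (abelianness, the right normalization, a non-degenerate pairing) hold at each stage. Collecting the rational sums produces exactly $\sum_{y\in Y'(F)}$ with the translate $y^{-1}$ inserted between $u$ and $\omega g$, while the integration fills $U_2$ out to its full unipotent radical and the characters combine into $\psi_{y(Y_2)}$, yielding the stated formula. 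Interchanging the sum over $Y'(F)$ with the $U_2$-integral is legitimate because, after moving the rational element to the left, the individual terms are $U_2$-Fourier coefficients at the conjugated characters $\psi_{y(Y_2)}^{y^{-1}}$, which decay rapidly in $y$; no smallness hypothesis on $\pi$ is used, the identity being a purely formal consequence of Fourier theory valid for every automorphic form.

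The main obstacle is the combinatorial bookkeeping: tracking, under the $\omega$-conjugation and through the successive Fourier expansions and root exchanges, which root spaces are integrated, which are summed over their $F$-points and which are absorbed, and checking at each stage that every commutator that arises is controlled by $\psi_{y(Y_2)}$ so that the exchange hypotheses are met and no extraneous contribution survives. This is structurally the same computation as in Theorem~\ref{thm:min-coeff}, and one expects it to go through once the unipotent subgroups and characters above are matched up explicitly; the sole role of $\omega$ is to bring the standard orbit representative into the position adapted to $P_2$.
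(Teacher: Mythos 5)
Your overall route --- fix the standard representative $u=J_{[2^21^{n-4}]}$ with character $\psi_{\alpha_1,\alpha_3}$, conjugate by $\omega$, and then generate the sum over $Y'(F)$ from a Fourier expansion along the abelian directions of $U_2$ that are missing from the conjugated domain --- is indeed the paper's route, but two of your intermediate claims would fail as written. First, the structural identification in your second step is wrong for the pair that actually defines $\mathcal{F}^{[221\ldots]}$: with the neutral pair $s=(1,-1,1,-1,0,\ldots,0)$ the domain $N_s$ is the four-dimensional group with entries at positions $(1,2),(1,4),(3,2),(3,4)$, and $\omega N_s\omega^{-1}$ is exactly the group of block matrices with an arbitrary $z\in\Mat_{2\times 2}$ in rows $1,2$ and columns $3,4$, sitting inside $U_2$; it does \emph{not} contain $Y'$, which lives in lower-triangular root spaces. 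The group $Y'(F)$ enters only as the character (Pontryagin) dual of $[X']$, where $X'=\prod_{i\geq 5}X_{e_1-e_i}\prod_{i\geq 5}X_{e_2-e_i}$ is the complement of $\omega N_s\omega^{-1}$ in $U_2$, so that $X'\,\omega N_s\omega^{-1}=U_2$. If instead you choose a non-neutral Whittaker pair so that the conjugated domain really is $(\text{block})\cdot Y'$, you cannot identify the resulting coefficient with $\mathcal{F}^{[221\ldots]}$: the only comparison result imported in the paper, Theorem~\ref{thm:ggsglobal}, transfers vanishing, not values, so the ``independence of the choice of Whittaker pair'' you invoke is not available here and is in fact what the theorem's nontrivial content (the sum over $Y'(F)$) replaces.

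Second, root exchange (Lemma~\ref{exchangeroots}) is the wrong tool for producing $\sum_{y\in Y'(F)}$: it trades a compact $[Y']$-integration for an adelic $Y'(\BA)$-integration (this is exactly what happens in the rank-two part of Theorem~\ref{thm:max-parabolic}, where a residual $C(\BA)$-integral survives), whereas the statement here has a rational sum and no leftover adelic integral. The correct mechanism, which your third step only gestures at, is the one of Theorem~\ref{thm:min-coeff}: Fourier expand $\varphi$ along the compact abelian group $[X']$, indexing the expansion by $y\in Y'(F)$ via $\psi_y(x)=\psi_u^{\omega}([x,y])$; insert $y^{-1}$ on the left by automorphy, use that $[x,y]$ lands in the block where $\psi_u^{\omega}$ lives so that $\psi_u^{\omega,-1}(-[x,y])\psi_y^{-1}(x)=1$ after the change of variables $n\mapsto n+[x,y]$, and then assemble $X'N_s^{\omega}=U_2$ with $\psi_u^{\omega}=\psi_{y(Y_2)}$ as in \eqref{eq:psi-Y2}. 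No decay estimate in $y$ is needed to justify the interchange: it is the ordinary Fourier expansion of a smooth function on the compact group $[X']$, valid for every automorphic form. With these corrections your outline collapses to the paper's proof; as it stands, the misidentification of $\omega V\omega^{-1}$ and the appeal to Whittaker-pair independence are genuine gaps.
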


\subsection{Applications in string theory}
\label{sec:string}
String theory is a quantum theory of gravity describing maps $X: \Sigma \to M$, where $\Sigma$ is a Riemann surface (the string worldsheet) and $M$ is a ten-dimensional pseudo-Riemannian manifold (spacetime). Its low-energy limit is a supersymmetric extension of Einstein's theory of gravity in 10 dimensions coupled to additional matter in the form of scalar fields $\Phi : M\to \mathbb{C}$ and differential forms on spacetime $M$. Our main focus here will be the scalar fields. The scalar fields parametrize the space of string theory vacua, i.e. the moduli space $\mathcal{M}$. 

To make contact with a lower-dimensional world, one choice is to decompose spacetime into 
$$M=\mathbb{R}^{1,9-n}\times T^n\,,$$
 where $\mathbb{R}^{1,9-n}$ is the flat Minkowski space in $10-n$ dimensions and $T^n$  is an $n$-dimensional torus. In the limit when the size of the torus is small, the physics looks effectively $(10-n)$-dimensional and one says that the theory has been \emph{compactified}. As the size of the torus is  increased the moduli space $\mathcal{M}$ gets larger and larger due to an increased number of scalar fields $\Phi$. The moduli space for this toroidal compactification is always of the form 
\begin{equation}
\mathcal{M}=\mathrm{G}(\mathbb{Z})\backslash \mathrm{G}(\mathbb{R})/K\,,
\end{equation}
where $\mathrm{G}(\mathbb{R})$ is a semi-simple Lie group in its split real form, $K$  its maximal compact subgroup and $\mathrm{G}(\mathbb{Z})$ an arithmetic subgroup. The group $\mathrm{G}(\mathbb{Z})$ is known as the \emph{U-duality group} and is a symmetry of the full quantum string theory. The extreme case when $n=0$, i.e. for no compactification, the moduli space is given by 
$$\mathcal{M}=\SL_2(\mathbb{Z})\backslash \SL_2(\mathbb{R})/\mathrm{SO}_2\, .$$
 Another extreme case is $n=6$, corresponding to four space-time dimensions, for which the moduli space is given by \cite{HT95}
\begin{equation}
\mathcal{M}= \mathrm{E}_7(\mathbb{Z})\backslash \mathrm{E}_7(\mathbb{R})/(\mathrm{SU}_8/\mathbb{Z}_2)\,.
\end{equation}
Here $\mathrm{E}_7(\mathbb{R})$ is the split real form and $\mathrm{E}_7(\mathbb{Z})$ its Chevalley group of integer points. The sequence of groups in between are obtained by successively removing nodes from the $\mathrm{E}_7$ Dynkin diagram; see table \ref{tab:duality} for the complete list.

Constraints from $U$-duality and supersymmetry ensure that certain quantum corrections to Einstein's gravitational theory involve functions $f : \mathcal{M} \to \mathbb{C}$ that must be eigenfunctions of the ring of $\mathrm{G}(\mathbb{R})$-invariant differential operators. In particular they are eigenfunctions of the Laplacian on $\mathrm{G}(\mathbb{R})/K$ with  specific eigenvalues. In addition, they must have  well-behaved growth properties in certain limits corresponding to `cusps' of $\mathcal{M}$. Such quantum corrections are therefore controlled by automorphic forms on $\mathcal{M}$.

It turns out that the relevant automorphic forms are very special and are precisely those attached to a minimal and next-to-minimal automorphic representation of the groups $\mathrm{G}$ \cite{GMRV10,P10,GMV15}. The Fourier coefficients of such automorphic forms therefore have a direct physical interpretation: the constant terms encode perturbative quantum corrections, while the non-constant terms correspond to non-perturbative, instanton, effects \cite{FK12,FKP14,BV14,BV15a,BV15b,BCP17a,BP17,BCP17b}. For a recent book on automorphic representations and the connection with string theory, see \cite{FGKP18}.

Fourier coefficients with respect to different choices of parabolic subgroups $P\subset \mathrm{G}$ correspond to different limits in string theory, and reveal different types of effects. The ones of main interest are certain maximal parabolic subgroups. Let $P_\alpha= L_\alpha U_\alpha$ denote the maximal parabolic whose Levi subgroup is $L_\alpha = M_\alpha\times \GL_1$, where $M_\alpha$ is obtained by removing the node in the Dynkin diagram of $\mathrm{G}$ corresponding to the simple root $\alpha$. There are three types of maximal parabolics of main interest  in string theory (the numbering of nodes are according to the Bourbaki convention of the exceptional Lie algebras):
\begin{itemize}

\item  $P_{\alpha_1}$: this is the \emph{perturbative, or string theory, limit} where the Levi is of orthogonal type $M_{\alpha_1}=\mathrm{D}_{n}$;

\item $P_{\alpha_2}$: this is the \emph{M-theory limit} where the Levi is of type $M_{\alpha_2}=\mathrm{A}_{n}$;

\item $P_{\alpha_{n+1}}$: this is the \emph{decompactification limit} where the Levi is of exceptional type $M_{\alpha_{n+1}}=\mathrm{E}_n$ (for $n<6$ these are strictly speaking not exceptional, but given by table \ref{tab:duality}).
\end{itemize}

Theorem \ref{thm:max-parabolic}, together with its counterpart in \cite{GKP16}, then provides explicit results for the Fourier coefficients of automorphic forms in all these parabolics for the cases $n=2$ or $n=3$ when the symmetry groups are $\SL_2\times \SL_3$ or $\SL_5$, respectively. The case of $\SL_5$  will be treated in detail in section \ref{sec:sl5}.

\begin{table}[t!hb]
\centering
\caption{\label{tab:duality} List of U-duality groups in compactifications of (type IIB) string theory on $T^n$.}
\begin{tabular}{|c|c|c|c|}
%\rule
\hline
$n$ & $\mathrm{G}(\mathbb{R})$ & $K(\mathbb{R})$ & $\mathrm{G}(\mathbb{Z})$ \\
%\rule
\hline
$0$ & $\SL_2(\mathbb{R})$ & $\SO_2$ & $\SL_2(\mathbb{Z})$ \\
$1$ & $\GL_2(\mathbb{R})$ & $\SO_2$ & $\SL_2(\mathbb{Z})$ \\
$2$ & $\SL_2(\mathbb{R})\times \SL_3(\mathbb{R})$ & $\SO_2\times \SO_2$ & $\SL_3(\mathbb{Z})\times \SL_2(\mathbb{Z})$\\
$3$ & $\SL_5(\mathbb{R})$ & $\SO_5$ & $\SL_5(\mathbb{Z})$ \\
$4$ & $\Spin_{5,5}(\mathbb{R})$ & $(\Spin_5\times \Spin_5)/\mathbb{Z}_2$ & $\Spin_{5,5}(\mathbb{Z})$\\
$5$ & $\mathrm{E}_6(\mathbb{R})$ & $\mathrm{USp}_8/\mathbb{Z}_2$ & $\mathrm{E}_6(\mathbb{Z})$ \\
$6$ & $\mathrm{E}_7(\mathbb{R})$ & $\SU_8/\mathbb{Z}_2$ & $\mathrm{E}_7(\mathbb{Z})$\\
$7$ & $\mathrm{E}_8(\mathbb{R})$ & $\Spin_{16}/\mathbb{Z}_2$ & $\mathrm{E}_8(\mathbb{Z})$\\
%\rule
\hline
\end{tabular}
\end{table}

\subsection*{Acknowledgements}

We have greatly benefitted from many discussions with Dmitry Gourevitch, Joseph Hundley, Stephen D. Miller  and Siddhartha Sahi. We gratefully acknowledge support from the Simons Center for Geometry and Physics, Stony Brook University during the program on ``Automorphic forms, mock modular forms and string theory'' in the fall of 2016 during which part of the research for this paper was performed. The fourth named author is partially supported by NSF grant DMS-1702218 and by a start-up fund from the Department of Mathematics at Purdue University.

\section{Nilpotent orbits and Fourier coefficients}
\label{sec:fourier}

In this section, first, we introduce Whittaker pairs and nilpotent orbits with their associated Fourier coefficients following~\cite{GGS15}, which is slightly more general and easier to use than the one given in~\cite{G06}. 
Then we recall the parametrization of $F$-rational nilpotent orbits of $\SL_n$ in terms of partitions of $n$ from \cite{N11} and a lemma for exchanging roots in Fourier integrals from \cite{GRS11}.

As before, let $F$ be a number field, $\ads$ be the adele ring of $F$ and fix a non-trivial additive character $\psi$ on $F \bs \BA$. 
Let also $\RG$ be a reductive group defined over $F$, or a central extension of finite degree, and let $\lie g$ be the Lie algebra of $\RG(F)$. For a semi-simple element $s \in \lie g$, let $\lie g^s_i$ be defined as the eigenspace of $s$ in $\lie g$ with eigenvalue $i$ under the adjoint action decomposing $\lie g$ to a direct sum of eigenspaces over different eigenvalues. For any $r \in \BQ$, we further define $\lie g^s_{\geq r} = \oplus_{r' \geq r} \lie g^s_{r'}$ and similarly for other inequality relations. For an element $X \in \lie g$, we will also denote the centralizer of $X$ in $\lie g$ as
\begin{equation}
\lie g_X = \left\{ x \in \lie g \,\middle|\, \left[x,X\right]=0\right\}\,.
\end{equation}

Furthermore, a semi-simple element $s$ is called \emph{rational semi-simple} if all of its eigenvalues under the adjoint action on $\lie g$ are in $\BQ$. For such a rational semi-simple element $s$ and a non-trivial nilpotent element $u \in \lie g^s_{-2}$ we call the ordered pair $(s, u)$ a \emph{Whittaker pair}. If, for such a pair, $s$ is also called a \emph{neutral element} for $u$ or $(s, u)$ a \emph{neutral pair} if the map $\lie g^s_0 \to \lie g^s_{-2} : X \mapsto [X,u]$ is surjective or, equivalently \cite[Lemma 2.2.1]{GGS15}, $s \in \Im(\operatorname{ad}(u))$.

An \emph{$\lie{sl}_2$-triple} is an ordered triple $(u, s, v)$ of elements in $\lie g$ that satisfy the standard commutation relations for $\lie{sl}_2$, 
\begin{equation}
    [s, v] = 2v \qquad [s, u] = - 2u \qquad [v, u] = s\,,
\end{equation}
where $u$ is called the nil-negative element, $v$ is called the nil-positive element and $s$ is a neutral element for $u$. 
We have, from \cite[Lemma 2.2.1]{GGS15}, that a Whittaker pair $(s,u)$ comes from an $\lie{sl}_2$-triple $(u,s,v)$ if and only if $s$ is a neutral element for $u$.

By the Jacobson--Morozov theorem, there exists an $\lie{sl}_2$ triple for any nilpotent element $u \in \lie g$. 
Moreover, the $\RG$-conjugacy classes of $\lie{sl}_2$-triples are one-to-one with the nilpotent orbits $\mathcal{O}_X = \{gXg^{-1} \mid g \in \RG(F)\}$ in $\lie g$ \cite{CollingwoodMcGovern}.  

We will now construct the Fourier coefficient that is associated to a Whittaker pair $(s, u)$. The pair defines a unipotent subgroup $N_s$ and a character $\psi_u$ on $N_s$ as follows. Following \cite[Lemma 3.2.6]{GGS15}, let
\begin{equation}
    \lie{n}_s = \lie{g}^s_{>1} \oplus \lie{g}^s_1 \cap \lie{g}_u \, ,
\end{equation}
which is a nilpotent subalgebra of $\lie g$, and define $N_s = \exp(\lie n_s)$ as the corresponding unipotent subgroup of $\RG$. Then $\psi_u$, defined by
\begin{equation}
    \psi_u(n) = \psi(\langle u, \log(n)\rangle) \qquad n \in N_s(\ads)\,,
\end{equation}
is a character on $N_s(\ads)$ where $\langle \cdot, \cdot \rangle$ is the Killing form.

Note that if the Whittaker pair $(s,u)$ comes from an $\mathfrak{sl}_2$-triple $(u,s,v)$, then, by $\lie{sl}_2$ representation theory, $\ad(s)$ has integer eigenvalues with a graded decomposition of the Lie algebra $\lie g = \bigoplus_{i \in \ints} \lie{g}^s_i$ and $\lie g_u \subset \bigoplus_{i \leq 0} \lie{g}^s_i$ \cite{CollingwoodMcGovern},
and thus, 
\begin{equation}
    \label{eq:neutral-ns}
    \mathfrak{n}_s=\mathfrak{g}^s_{\geq 2} \qquad (\text{for neutral } s).
\end{equation}

Let $\pi$ be an automorphic representation of $\RG(\BA)$ and $\varphi$ an automorphic form attached to $\pi$.
The Fourier coefficient associated with a Whittaker pair $(s, u)$ is
\begin{equation}
    \label{fc}
    \CF_{s,u}(\varphi)(g) = \intl_{N_{s}(F) \bs N_{s}(\BA)} \varphi(ng){\psi}^{-1}_u(n)dn, \quad g \in \RG(\BA) \, ,
\end{equation}
and let $\CF_{s,u}(\pi)=\{\CF_{s,u}(\varphi) \mid \varphi\in \pi\}$. 
For convenience, we introduce the following notation for a unipotent subgroup $U$
\begin{equation}
    [U] = U(F) \bs U(\ads) \, .    
\end{equation}

Consider the Fourier coefficient associated with a neutral Whittaker pair $(s, u)$, and let $(s', u') = (\gamma s\gamma^{-1}, \gamma u \gamma^{-1})$ which is also neutral for any $\gamma\in G(F)$. 
Because of the invariance of the Killing form we have that $\psi_{u'}(n') = \psi_u(\gamma^{-1} n' \gamma)$ where $n' \in [N_{s'}]$, and because of \eqref{eq:neutral-ns} we have that $N_{s'} = \gamma N_s \gamma^{-1}$.
Thus, with a variable substitution $n' = \gamma n \gamma^{-1}$,
\begin{equation}
    \label{eq:orbit-coefficient}
    \begin{split}
        \CF_{s',u'}(\varphi)(g) &= \intl_{[\gamma N_{s'} \gamma^{-1}]} \varphi(n'g) \psi^{-1}_u(\gamma^{-1} n' \gamma) \, dn \\
        &= \intl_{[N_s]} \varphi(\gamma n \gamma^{-1} g)  \psi_u^{-1}(n) \, dn = \CF_{s,u}(\varphi)(\gamma^{-1}g)\,,
    \end{split}
\end{equation}
using the automorphic invariance of $\varphi$. 
Note the resemblance with \eqref{eq:Fourier-L-conjugation} where we made a conjugation keeping $N_s$ invariant.
In particular, \eqref{eq:orbit-coefficient} means that if $\CF_{s,u}$ vanishes identically then so do all Fourier coefficients associated to neutral Whittaker pairs $(s',u')$ where $u' \in \Oh_u$.
For an $F$-rational nilpotent orbit $\Oh$, we say that the coefficients $\CF_{s,u}$ with neutral $s$ and $u \in \Oh$ are Fourier coefficients attached to the nilpotent orbit $\Oh$.

We define the \emph{(global) wave-front set} $\mathcal{WF}(\pi)$ of an automorphic representation $\pi$ of $\RG(\ads)$ as the set of nilpotent orbits $\mathcal{O}$ such that $\mathcal{F}_{s,u}(\pi)$ is non-zero, for some (and therefore all) neutral Whittaker pairs $(s,u)$ with $u \in \mathcal{O}$.
Note that nilpotent orbits can be partially ordered with respect to the inclusion of Zariski closures $\Oh' \leq \Oh$ if $\ol{\Oh'} \subseteq \ol{\Oh}$.

We recall \cite[Theorem C]{GGS15} as follows. 

\begin{thm}[Theorem C, \cite{GGS15}]\label{thm:ggsglobal}
Let $\pi$ be an automorphic representation of $\RG(\BA)$, let $(s,u)$ be a Whittaker pair, and $(h, u)$ a neutral Whittaker pair such that $\CF_{h,u}(\pi)$ is zero. 
Then, $\CF_{s,u}(\pi)$ is zero.
\end{thm}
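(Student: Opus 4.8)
\emph{Proof idea.} The plan is to connect the neutral pair $(h,u)$ to the arbitrary Whittaker pair $(s,u)$ by a finite chain of Whittaker pairs, all having the same nilpotent $u$, and to show that passing from one link to the next cannot turn a vanishing Fourier coefficient into a non-vanishing one. The underlying principle is that the Fourier coefficient attached to the neutral pair is the ``largest'' one among all Whittaker pairs with nil-negative element $u$.

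First I would normalise so that $s$ and $h$ commute. By \eqref{eq:orbit-coefficient}, conjugating $h$ or $s$ by an element $\gamma\in\RG(F)$ with $\gamma u\gamma^{-1}=u$ only translates the argument of the corresponding Fourier coefficient, hence preserves both the hypothesis $\CF_{h,u}(\pi)=0$ and the conclusion; and since $u\in\lie g^s_{-2}$ while $\lie g_u\subseteq\bigoplus_{i\le 0}\lie g^h_i$ by $\mathfrak{sl}_2$-theory, a standard structural argument (a fact about Whittaker pairs; see also \cite{GGS15}) shows that the orbit of $h$ under these conjugations contains a neutral element commuting with $s$. So I may assume $[s,h]=0$ and set $z\defeq s-h$; then $z$ is rational semisimple with $[z,h]=[z,u]=0$, so $h_t\defeq h+tz$ is rational semisimple for all $t\in\BQ$ and $(h_t,u)$ is a Whittaker pair with $h_0=h$ neutral and $h_1=s$.

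Next I would follow the pair $(N_{h_t},\psi_u)$ as $t$ increases from $0$ to $1$. On each common eigenspace of $(h,z)$ with eigenvalues $(i,j)$, the element $h_t$ acts by $i+tj$, so $\lie n_{h_t}=\lie g^{h_t}_{>1}\oplus(\lie g^{h_t}_1\cap\lie g_u)$ is constant in $t$ except at finitely many rational values $0=t_0<t_1<\dots<t_N=1$ where some $i+tj$ crosses the value $1$; at $t=0$ it equals $\lie g^h_{\ge 2}$ by \eqref{eq:neutral-ns}. The heart of the argument is to show that the vanishing of $\CF_{h_t,u}(\pi)$ survives each jump: just below and just above $t_k$ the two unipotent subgroups differ only by whether a certain abelian graded piece $X$ (eigenvalue passing through $1$) or its Killing-dual partner $\bar X$ lies inside; the eigenvalue configuration forces $\langle u,X\rangle=\langle u,\bar X\rangle=0$, so $\psi_u$ is trivial on both, and the root-exchange lemma from \cite{GRS11} (recalled later in this paper) identifies the two Fourier coefficients as integrals of one another over $[X]$ resp.\ $[\bar X]$. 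Consequently one is identically zero exactly when the other is, and chaining the $N$ equivalences yields $\CF_{s,u}(\pi)=\CF_{h_1,u}(\pi)=0$. The truncation by $\lie g_u$ in degree $1$ must be threaded through compatibly, which is where the precise shape of $\lie n_s$ and \cite[Lemma 3.2.6]{GGS15} enter.

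The main obstacle I anticipate is exactly the bookkeeping at the jumps: for each $t_k$ one must determine precisely which graded subspaces enter and leave $\lie n_{h_t}$, pair them into Killing-dual pairs, and check in every case that $\psi_u$ restricts trivially to both members, so that the exchange lemma applies \emph{with trivial characters} and hence transports vanishing in both directions. By contrast, the reduction to $[s,h]=0$, the finiteness of the jump set, and the independence of $\CF_{h,u}(\pi)$ on the choice of neutral element should follow routinely from $\mathfrak{sl}_2$-theory and \eqref{eq:orbit-coefficient}.
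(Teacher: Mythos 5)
First, a point of comparison: the paper does not actually prove this statement — it is quoted verbatim as Theorem C of \cite{GGS15} and used as a black box — so there is no in-paper proof to measure your attempt against. What you have sketched is, in outline, the strategy of the original proof in \cite{GGS15}: replace $h$ by a neutral element commuting with $s$ (using that any two neutral elements for $u$ are conjugate under the $F$-points of the stabilizer of $u$, together with the covariance \eqref{eq:orbit-coefficient}), set $z=s-h$ so that $[z,h]=[z,u]=0$, deform along $h_t=h+tz$, and propagate vanishing of the Fourier coefficients across the finitely many rational values of $t$ where $\lie n_{h_t}=\lie g^{h_t}_{>1}\oplus(\lie g^{h_t}_1\cap\lie g_u)$ jumps. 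That reduction, the finiteness of the critical set, and the triviality of $\psi_u$ on the graded pieces that enter or leave (since they pair under the Killing form with eigenvalues near $-1$, not $-2$) are all correct and routine, as you say.

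The genuine gap is exactly at the jumps, and it is larger than ``bookkeeping''. You assert that just below and just above a critical $t_k$ the two unipotent groups differ by a single abelian piece $X$ and its Killing-dual partner $\bar X$, so that Lemma \ref{exchangeroots} applies symmetrically and yields that the two coefficients vanish \emph{simultaneously}. In general this is not the situation: the dimensions of $\lie n_{h_{t}}$ on the two sides of a critical value need not agree, the entering and leaving pieces need not be in perfect duality, and condition (6) of the exchange lemma (the non-degenerate pairing identifying the relevant rational quotients with each other's duals) is precisely what must be engineered — in \cite{GGS15} this is done by refining the deformation and proving a dedicated ``key lemma'' relating consecutive pairs through a quasi-Heisenberg structure, and the resulting relation is only one-directional: the coefficient for $h_{t_{k+1}}$ is an adelic integral of translates of the coefficient for $h_{t_k}$, so vanishing propagates forward (which is all the theorem needs), not back. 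Your claimed equivalence at each step is therefore unjustified (and unnecessary), and the verification that each crossing can be put in a form where an exchange/expansion argument applies — including the treatment of the truncation $\lie g^{h_t}_1\cap\lie g_u$ — is the actual mathematical content of Theorem C, which your proposal defers rather than supplies.
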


This means that if $u \in \Oh$ where $\Oh \nin \mathcal{WF}(\pi)$ then, for any Whittaker pair $(s, u)$, not necessarily neutral, the associated Fourier coefficient $\mathcal{F}_{s, u}(\varphi)$ vanishes identically for $\varphi \in \pi$.

In this paper, we focus on the group $\SL_n$ where we parametrize a character on $N_s$ by $u \in \lie g^{s}_{-2}$ as 
\begin{equation}
    \label{eq:character}
    \psi_u(n)=\psi(\tr (u\log(n))) \qquad n \in N_s(\ads).
\end{equation}
Then, for any $l$ in the normalizer of $N_s(\ads)$ in $\RG(\ads)$
\begin{equation}
    \label{eq:character-conjugation}
    \begin{split}
        \psi_y^l(x) &= \psi_y(l x l^{-1}) = \psi\big(\tr( y \log(l x l^{-1}))\big) = \psi\big(\tr( y l \log(x) l^{-1})\big) \\
        &= \psi\big(\tr(l^{-1} y l \log(x))\big) = \psi_{l^{-1} y l}(x) \, .
    \end{split}
\end{equation}

The nilpotent orbits of $\SL_n$ can be described by partitions $\ul{p}$ of $n$.
Let us characterize the $F$-rational orbits of $\SL_n$ following \cite{N11}.

\begin{prop}[Proposition 4, \cite{N11}]\label{orbits}
Let $\ul{p}=[p_1 p_2 \cdots p_r]$ be an ordered partition of $n$, with $p_1 \geq p_2 \geq \ldots \geq p_r$ and let $m = \gcd(\ul{p})=\gcd(p_1, p_2, \ldots, p_r)$. 
For $d \in F^\times$, define $D(d) = \diag(1, 1, \ldots, 1, d)$ and let also $J_{\ul{p}}$ be the standard (lower triangular) Jordan matrix corresponding to $\ul{p}$: $J_{\ul{p}} = \diag(J_{[p_1]}, J_{[p_2]}, \ldots, J_{[p_3]})$, where $J_{[p]}$ is a $p\times p$ matrix with non-zero elements only on the subdiagonal which are one.

\begin{enumerate}
\item For each $d \in F^\times$, the matrix $D(d)J_{\ul{p}}$ is a representative of an $F$-rational nilpotent orbit of $\SL_n$ parametrized by $\ul{p}$, and conversely, every orbit parametrized by $\ul{p}$ has a representative of this form. 
We say that the $F$-rational orbit represented by $D(d)J_{\ul{p}}$ is parametrized by $(\ul{p},d)$.

\item The $\SL_n(F)$-orbits represented by $D(d)J_{\ul{p}}$ and $D(d')J_{\ul{p}'}$ coincide if and only if $\ul{p}=\ul{p}'$ and $d \equiv d'$ in $F^\times/(F^\times)^m$.
\end{enumerate}
\end{prop}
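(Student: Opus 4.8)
The plan is to deduce the statement from the classical classification of nilpotent $\GL_n(F)$-orbits together with an elementary analysis of how each such orbit splits under $\SL_n(F)$. Over any field, rational canonical form shows that two nilpotent matrices in $\mathfrak{gl}_n(F)$ are $\GL_n(F)$-conjugate if and only if they have the same Jordan type, so the nilpotent $\GL_n(F)$-orbits are in bijection with partitions $\ul p$ of $n$, the orbit attached to $\ul p$ being the full $\GL_n(F)$-conjugacy class $\CO_{\ul p}$ of $J_{\ul p}$. Since $\SL_n(F)$ is normal in $\GL_n(F)$ with abelian quotient $\GL_n(F)/\SL_n(F) \cong F^\times$ (via $\det$), each $\CO_{\ul p}$ decomposes into $\SL_n(F)$-orbits indexed by $F^\times / \det Z_{\GL_n(F)}(J_{\ul p})$. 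Concretely, if $X_1 = g_1 J_{\ul p} g_1^{-1}$ and $X_2 = g_2 J_{\ul p} g_2^{-1}$ with $g_i \in \GL_n(F)$, then $X_1$ and $X_2$ are $\SL_n(F)$-conjugate if and only if there is $s \in \SL_n(F)$ with $g_2^{-1} s g_1 \in Z_{\GL_n(F)}(J_{\ul p})$, i.e.\ if and only if $\det(g_1)\det(g_2)^{-1} \in \det Z_{\GL_n(F)}(J_{\ul p})$. So everything reduces to identifying this subgroup of $F^\times$, and the claim is that it equals $(F^\times)^m$ with $m = \gcd(\ul p)$.

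Next I would install the representatives. The matrix $D(d) J_{\ul p}$ is strictly lower triangular, hence nilpotent of trace $0$, so it lies in $\mathfrak{sl}_n$. Moreover the last standard basis vector $e_n$ lies in $\ker J_{\ul p}$ (the last column of $J_{\ul p}$ vanishes), so $J_{\ul p} D(d)^{-1} = J_{\ul p}$ and therefore $D(d) J_{\ul p} = D(d) J_{\ul p} D(d)^{-1}$; that is, $D(d) J_{\ul p}$ is the $\GL_n(F)$-conjugate of $J_{\ul p}$ by $D(d)$, and in particular it has Jordan type $\ul p$. This already yields part (1): given any nilpotent $X$ of type $\ul p$, write $X = g J_{\ul p} g^{-1}$ with $g \in \GL_n(F)$, set $c = \det g$, and factor $g = (g D(c)^{-1}) D(c)$ with $g D(c)^{-1} \in \SL_n(F)$; then $X$ is $\SL_n(F)$-conjugate to $D(c) J_{\ul p} D(c)^{-1} = D(c) J_{\ul p}$. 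For part (2): if $D(d) J_{\ul p}$ and $D(d') J_{\ul p'}$ are $\SL_n(F)$-conjugate they are $\GL_n(F)$-conjugate, hence of equal Jordan type, forcing $\ul p = \ul p'$; within $\CO_{\ul p}$ the conjugating elements $D(d)$ and $D(d')$ have determinants $d$ and $d'$, so by the dictionary of the previous paragraph the two orbits coincide if and only if $d \equiv d' \bmod (F^\times)^m$.

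The real work, and the step I expect to be the main obstacle, is the centralizer computation $\det Z_{\GL_n(F)}(J_{\ul p}) = (F^\times)^m$. Here I would pass to the module picture: let $V = F^n$ with $t$ acting as $J_{\ul p}$, so $V \cong \bigoplus_j (F[t]/(t^{q_j}))^{n_j}$, where $q_1 > \dots > q_s$ are the distinct parts of $\ul p$ with multiplicities $n_j$, and $Z_{\GL_n(F)}(J_{\ul p}) = \Aut_{F[t]}(V) = R^\times$ with $R = \End_{F[t]}(V)$ a finite-dimensional $F$-algebra. Its Jacobson radical $\rad R$ is nilpotent and $R/\rad R \cong \prod_j M_{n_j}(F)$, so $R^\times$ surjects onto $\prod_j \GL_{n_j}(F)$ with kernel $1 + \rad R$ consisting of unipotent matrices, which have determinant $1$; thus $\det R^\times$ equals the determinant of any lift of $\prod_j \GL_{n_j}(F)$ into $R^\times$. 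The natural lift sends $(A_j)_j$ to the endomorphism acting as $\mathrm{id} \otimes A_j$ on the $j$-th isotypic block $F[t]/(t^{q_j}) \otimes_F F^{n_j}$ and trivially on the others, and such an endomorphism has determinant $\prod_j (\det A_j)^{q_j}$. Letting the $A_j$ range over $\GL_{n_j}(F)$ independently, the image in $F^\times$ is the subgroup generated by $(F^\times)^{q_1}, \dots, (F^\times)^{q_s}$, which by Bézout equals $(F^\times)^{\gcd(q_1,\dots,q_s)} = (F^\times)^m$. The delicate points are the existence of the Wedderburn--Malcev lift and the tensor-product determinant bookkeeping; both can be made completely explicit for these nilpotent centralizers, or one can bypass Wedderburn--Malcev by exhibiting the ``block-scalar'' copy of $\prod_j \GL_{n_j}(F)$ inside $\Aut_{F[t]}(V)$ directly and checking that every $F[t]$-automorphism differs from one of these by an element of $1 + \rad R$, using that the homomorphism spaces between non-isomorphic indecomposable summands land in $\rad R$.
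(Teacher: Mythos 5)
Your proof is correct; note that the paper does not prove this proposition at all but imports it verbatim as Proposition 4 of \cite{N11}, and your argument (classify nilpotent $\GL_n(F)$-orbits by Jordan type, index the $\SL_n(F)$-orbits inside a fixed $\GL_n(F)$-orbit by $F^\times/\det Z_{\GL_n(F)}(J_{\ul p})$, and compute $\det Z_{\GL_n(F)}(J_{\ul p})=(F^\times)^{\gcd(\ul p)}$ via the block-scalar copy of $\prod_j \GL_{n_j}(F)$ in $\End_{F[t]}(V)$ modulo the radical) is exactly the standard route taken in that reference. The only points you leave implicit --- that the block-scalar subalgebra meets $\operatorname{rad}R$ trivially and surjects onto $R/\operatorname{rad}R$ by a dimension count, so every unit differs from a block-scalar one by an element of $1+\operatorname{rad}R$ of determinant $1$ --- are standard and you have flagged them accurately.
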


\begin{examp}
The $F$-rational orbit $([322], 1)$ of $\SL_7$ is represented by
\begin{equation}
    J_{[322]}=\diag(J_{[3]}, J_{[2]}, J_{[2]})=\begin{pmatrix}
0 & 0 & 0 & 0 & 0 & 0 & 0\\
1 & 0 & 0 & 0 & 0 & 0 & 0\\
0 & 1 & 0 & 0 & 0 & 0 & 0\\
0 & 0 & 0 & 0 & 0 & 0 & 0\\
0 & 0 & 0 & 1 & 0 & 0 & 0\\
0 & 0 & 0 & 0 & 0 & 0 & 0\\
0 & 0 & 0 & 0 & 0 & 1 & 0
\end{pmatrix}\,.
\end{equation}
\end{examp}

\begin{rmk}
    Over $\ol{F}$ the $F$-rational orbits for different $d$ become the same, meaning that they are completely characterized by partitions of $n$.
    There is partial ordering for partitions that agrees with the partial ordering of the $\ol{F}$-orbits, where $[p_1p_2\ldots p_r] \leq [q_1q_2\ldots q_r]$ (possibly padded by zeroes) if \cite{CollingwoodMcGovern}
    \begin{equation}
        \sum_{1\leq j \leq n} p_j \leq \sum_{1\leq j \leq n} q_j \quad \text{for } 1 \leq n \leq r \, .
    \end{equation}
   
    The Zarisky topology over $F$ is induced from that of $\overline F$ which means that we can use this partial ordering of partitions for the $F$-rational orbits as well. 
    Thus, when discussing the partial ordering of orbits or the closure of orbits we will sometimes not specify the $F$-rational orbit, but only the partition, that is, the $\SL_n(\ol{F})$-orbit.
\end{rmk}

An automorphic representation $\pi$ of $\SL_n(\ads)$ is called {\it minimal} if $\mathcal{WF}(\pi)$ is the set of orbits in the closure of the minimal (non-trivial) orbit which is represented by the partition $[21^{n-2}]$, and it is called \emph{next-to-minimal} if it is instead the set of orbits in the closure of the next-to-minimal orbit $[2^21^{n-4}]$.

We will now recall a general lemma for exchanging roots in Fourier coefficients from \cite{GRS11}. 
In \cite{GRS11}, the groups considered are quasi-split classical groups, but the lemma holds for any connected reductive group with exactly the same proof.

Let $\mathrm{G}$ be a connected reductive group defined over $F$ and let $C$ be an $F$-subgroup of a maximal unipotent subgroup of $\mathrm{G}$. 
Let also $\psi_C$ be a non-trivial character on $[C] = C(F) \bs C(\BA)$, and $X, Y$ two unipotent $F$-subgroups satisfying the following conditions:
\begin{enumerate}[label=(\arabic*)]
\item $X$ and $Y$ normalize $C$;
\item $X \cap C$ and $Y \cap C$ are normal in $X$ and $Y$, respectively, $(X \cap C) \bs X$ and $(Y \cap C) \bs Y$ are abelian;
\item $X(\BA)$ and $Y(\BA)$ preserve $\psi_C$ under conjugation;
\item $\psi_C$ is trivial on $(X \cap C)(\BA)$ and $(Y \cap C)(\BA)$;
\item $[X, Y] \subset C$;
\item there is a non-degenerate pairing 
    \begin{align}
        (X \cap C)(\BA) &\times (Y \cap C)(\BA) \rightarrow \BC^\times \\ 
        (x,y) &\mapsto \psi_C([x,y])
    \end{align}
    which is multiplicative in each coordinate, and identifies \linebreak[4]$(Y \cap C)(F) \bs Y(F)$ with the dual of
$
X(F)(X \cap C)(\BA) \bs X(\BA),
$
and
$(X \cap C)(F) \bs X(F)$ with the dual of
$
Y(F)(Y \cap C)(\BA) \bs Y(\BA).
$
\end{enumerate}

Let $B =CY$ and $D=CX$, and extend $\psi_C$ trivially to characters of $[B]=B(F)\bs B(\BA)$ and $[D]=D(F)\bs D(\BA)$,
which will be denoted by $\psi_B$ and $\psi_D$ respectively.

\begin{lem}[Lemma 7.1 of \cite{GRS11}]\label{exchangeroots}
Assume that $(C, \psi_C, X, Y)$ satisfies all the above conditions. Let $f$ be an automorphic form on $\mathrm{G}(\BA)$. Then for any $g \in \mathrm{G}(\BA)$, 
$$\int_{[B]} f(vg) \psi_B^{-1}(v) dv = \int_{(Y \cap C) (\BA) \bs Y(\BA)} \int_{[D]} f(vyg) \psi_D^{-1}(v) \,dv\,dy\,.$$
\end{lem}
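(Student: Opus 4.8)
The plan is to reduce both sides to integrals of the single partial period
\[
  f_C(g) \defeq \int_{[C]} f(cg)\,\psi_C^{-1}(c)\,dc ,
\]
and then to recognize their equality as a Fourier expansion on a compact abelian group. First I would record the elementary equivariance of $f_C$: using automorphy of $f$, the substitution $c\mapsto\gamma^{-1}c\gamma$, and conditions (1) and (3), one sees that $f_C$ is left-invariant under $X(F)$ and under $Y(F)$; the substitution $c\mapsto cc_0$ gives $f_C(c_0 g)=\psi_C(c_0)f_C(g)$ for $c_0\in C(\BA)$, so by condition (4) $f_C$ is in addition left-invariant under $(X\cap C)(\BA)$ and under $(Y\cap C)(\BA)$. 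Since $X$ and $Y$ normalize $C$ (condition (1)), $C$ is normal in $B=CY$ and in $D=CX$, and $\psi_B,\psi_D$ restrict to $\psi_C$ on $C$ and to the trivial character on $Y$ resp.\ $X$ (well-defined characters by conditions (3)--(4)); unfolding $[B]$ and $[D]$ along $C$ and using the invariances just noted to collapse the remaining integration onto the abelian quotients $\ovl X=(X\cap C)\bs X$ and $\ovl Y=(Y\cap C)\bs Y$ of condition (2), I obtain
\[
  \int_{[B]} f(vg)\,\psi_B^{-1}(v)\,dv = \int_{[\ovl Y]} f_C(yg)\,dy,
  \qquad
  \int_{[D]} f(vyg)\,\psi_D^{-1}(v)\,dv = \int_{[\ovl X]} f_C(xyg)\,dx .
\]
Thus the right-hand side of the lemma is $\int_{\ovl Y(\BA)}\int_{[\ovl X]} f_C(xyg)\,dx\,dy$, and writing the outer integral over $\ovl Y(\BA)$ as the sum over $\gamma\in\ovl Y(F)$ of integrals over $[\ovl Y]=\ovl Y(F)\bs\ovl Y(\BA)$ turns it into $\int_{[\ovl Y]}\sum_{\gamma\in\ovl Y(F)}\int_{[\ovl X]} f_C(x\gamma yg)\,dx\,dy$.

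It then suffices to prove the pointwise identity
\[
  f_C(h) = \sum_{\gamma\in\ovl Y(F)}\ \int_{[\ovl X]} f_C(x\gamma h)\,dx ,\qquad h\in\mathrm{G}(\BA),
\]
applied with $h=yg$; together with the last display this reproduces $\int_{[\ovl Y]} f_C(yg)\,dy$, i.e.\ the left-hand side. This identity is exactly the Fourier expansion of the smooth function $x\mapsto f_C(xh)$ on the compact abelian group $[\ovl X]=\ovl X(F)\bs\ovl X(\BA)$: by conditions (4), (5), (6) the assignment $\gamma\mapsto\big(x\mapsto\psi_C([x,\gamma])\big)$ is a well-defined, bi-multiplicative, perfect pairing identifying $\ovl Y(F)$ with the Pontryagin dual of $[\ovl X]$, so $f_C(xh)=\sum_{\gamma\in\ovl Y(F)} a_\gamma(h)\,\psi_C([x,\gamma])$ with $a_\gamma(h)=\int_{[\ovl X]} f_C(xh)\,\psi_C([x,\gamma])^{-1}\,dx$. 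Finally, for $\gamma\in\ovl Y(F)$ the relation $x\gamma=[x,\gamma]\,\gamma x$ with $[x,\gamma]\in C$ (condition (5)), the $C$-equivariance of $f_C$, and the left $Y(F)$-invariance of $f_C$ give $f_C(x\gamma h)=\psi_C([x,\gamma])\,f_C(xh)$; hence $\int_{[\ovl X]} f_C(x\gamma h)\,dx=a_{\gamma^{-1}}(h)$, and summing over $\gamma$ (and setting $x=1$ in the Fourier series) yields $\sum_{\gamma} a_{\gamma^{-1}}(h)=\sum_{\gamma} a_\gamma(h)=f_C(h)$, as required.

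The routine parts are the measure bookkeeping in the unfolding of $[B]$ and $[D]$ and the check that $\psi_B,\psi_D$ are well-defined characters, all immediate from conditions (1)--(4). The step I expect to require the most care --- the main obstacle --- is the analytic justification: the integral over $\ovl Y(\BA)$ on the right-hand side is over a non-compact group, so one must know that it converges absolutely, that its unfolding as $\int_{[\ovl Y]}\sum_{\ovl Y(F)}$ is legitimate, and that the resulting summation may be interchanged with the $[\ovl X]$-integration so as to be read off as the Fourier series of $x\mapsto f_C(xh)$. This rests on the uniform moderate growth of automorphic forms and on the smoothness of $f_C$ in the $\ovl X$-direction (so that the Fourier series converges rapidly and locally uniformly in $h$), and is carried out as in the original argument of \cite{GRS11}.
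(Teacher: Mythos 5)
Your argument is correct, and it is essentially the standard proof of this lemma: the paper itself does not reproduce a proof but defers to \cite{GRS11}, and the argument there is exactly your reduction to the partial period $f_C$, its equivariance properties under conditions (1)--(4), and the Fourier expansion of $x\mapsto f_C(xh)$ on the compact quotient $[\ovl X]$ with dual $\ovl Y(F)$ via the pairing of condition (6), followed by collapsing $\int_{[\ovl Y]}\sum_{\ovl Y(F)}$ into $\int_{\ovl Y(\BA)}$. Your identification of the analytic points (absolute convergence of the Fourier series and the legitimacy of the unfolding/interchange, resting on smoothness and moderate growth) as the only nonformal steps matches how the cited source handles them, so there is no gap to flag.
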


For simplicity, we will use $\psi_C$ to denote its extensions $\psi_B$ and $\psi_D$ when using the lemma.

\section{Proof of theorem \ref{thm:varphi}}
\label{sec:thmA}
Before we prove Theorem \ref{thm:varphi} in this section, let us first introduce a few definitions and useful lemmas. 

Let $V_i$ be the unipotent radical of the parabolic subgroup of type $(1^i,n-i)$, that is, the parabolic subgroup with Levi subgroup $(\GL_1)^i \times GL_{n-i}$ together with a determinant one condition. Then, $N= V_n = V_{n-1}$ is the unipotent radical of the Borel subgroup and $V_i$ can be seen as the first $i$ rows of $N$.
For $1 \leq i \leq n-1$, let $\alpha_i = e_i-e_{i+1}$ be the $i$-th simple root of $\SL_n$, and let $\psi_{\alpha_i}$ be the character of $N$ defined by 
\begin{equation}
    \psi_{\alpha_i}(n)= \psi(n_{i,i+1}), \forall n \in N(\BA) \, .
\end{equation}
For a list of simple roots, we let $\psi_{\alpha_{i_1}, \ldots, \alpha_{i_m}} = \psi_{\alpha_{i_1}} \cdots {} \; \psi_{\alpha_{i_m}}$ and we also regard $\psi_{\alpha_j}$ for $j \leq i$ as a character of $V_i$ via restriction. 

Also, let $R_{i+1}$ be the subgroup of $V_{i+1}$, consisting of the elements $v$ with conditions that $v_{p,q}=0$, for all $1 \leq p \leq i$ and $p < q \leq n$, that is $R_{i+1}$ consists of the row $i+1$ in $V_{i+1}$. It is clear that $R_{i+1} \iso V_i \bs V_{i+1}$ is an abelian subgroup of $V_{i+1}$.

For a character $\psi_N$ on $N$, we say that $\psi_N$ is trivial along a simple root $\alpha_i$ if the restriction of $\psi_N$ to $R_i$ is identically zero.

\begin{examp}
    For $\SL_5$ we have that 
    \begin{equation*}
        V_3 = \Big\{
        \begin{psmallmatrix}
            1 & * & * & * & *  \\
            & 1 & * & * & *  \\
            &   & 1 & * & *  \\
            &   &   & 1 &    \\
            &   &   &   & 1  \\
        \end{psmallmatrix} \Big\} \qquad
        R_3 = \Big\{
        \begin{psmallmatrix}
            1 &   &   &   &   \\
              & 1 &   &   &   \\
              &   & 1 & * & * \\
              &   &   & 1 &   \\
              &   &   &   & 1
        \end{psmallmatrix}
        \Big\} \, .
    \end{equation*}
\end{examp}

Thus, we have that $[R_i] \cong (F \bs \BA)^{n-i}$ and the dual of $[R_i]$ is $F^{n-i}$, which can be identified with the nilpotent subalgebra ${}^t \lie r_i(F) = \log({}^t{R}_{i}(F))$, where ${}^t{R}_{i}(F)$ is the transpose of $R_{i}(F)$.
Given $y \in {}^t{\lie r}_{i}(F)$, the corresponding character $\psi_y$ on $[R_{i}]$ is given by \eqref{eq:character} as
\begin{equation}
    \psi_y(x) = \psi(\tr (y \log x)), \quad \forall x \in [R_{i}] \, .
\end{equation}

\begin{examp}
     For $SL_5$ with $R_3$ above, let
     \begin{equation*}
         y = 
         \begin{psmallmatrix}
             0 & & & & \\
             & 0 & & & \\
             & & 0 & & \\
             & & y_1 & 0 & \\
             & & y_2 & & 0
         \end{psmallmatrix} \in {}^t\lie r_3(F) \qquad
         x =
         \begin{psmallmatrix}
             1 & & & & \\
             & 1 & & & \\
             & & 1 & x_1 & x_2 \\
             & & & 1 & \\
             & & & & 1
        \end{psmallmatrix} \in [R_3] \, .
     \end{equation*}
     Then, $\psi_y(x) = \psi(\tr(y \log x)) = \psi(y_1 x_1 + y_2 x_2)$.
\end{examp}

Define
\begin{equation}
    \label{eq:trdiag}
    \trdiag(\cdot) = \diag(\cdot) -\frac1n \tr(\diag(\cdot))
\end{equation}
and let $s = s_{V_i}$
\begin{equation}
    \label{eq:s-Vi}
    s_{V_i} = \trdiag(2(i-1), 2(i-2), \ldots, 0, -2, \ldots, -2) 
\end{equation}
for which $\lie g^s_{1} = \emptyset$ and $\lie n_s = \lie g^s_{\geq 2}$ with the corresponding $N_s = V_{i}$. In particular, we have $s_N = s_{V_{n-1}}= \trdiag(2(n-2), \ldots, 0, -2)$

\begin{lem}
    \label{lem:gamma}
    Let $\varphi$ be an automorphic form on $\SL_n(\BA)$. Then, for $1 \leq i \leq n-2$,
    \begin{equation}
        \label{eq:row-expansion}
        \sum_{\substack{y \in {}^t \lie{r}_i(F) \\ y \neq 0}} \, \intl_{[R_i]} \varphi(xg) \psi^{-1}_y(x) \, dx =
            \sum_{\gamma \in \Gamma_i} \,  \intl_{[R_i]} \varphi(x \iota(\gamma) g) \psi^{-1}_{\alpha_i}(x) \, dx\,,
    \end{equation}
    where $\Gamma_i$ is defined in \eqref{eq:Gamma-i} and $\iota(\gamma)$ in \eqref{eq:iota}.
\end{lem}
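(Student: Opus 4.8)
The plan is to view the left-hand side of \eqref{eq:row-expansion} as (the non-constant part of) the Fourier expansion of the function $x \mapsto \varphi(xg)$ on the abelian group $[R_i]$, and then to ``unfold'' this sum using the transitive action of $\iota(\SL_{n-i}(F))$ on its nonzero characters together with the left $\SL_n(F)$-invariance of $\varphi$ --- exactly the mechanism behind the Piatetski-Shapiro--Shalika formula.

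First I would set up the relevant group action. Since $1 \le i \le n-2$, the subgroup $\iota(\SL_{n-i})$ from \eqref{eq:iota} sits in the lower-right $(n-i)\times(n-i)$ block, i.e.\ in precisely the rows and columns carrying the entries of $R_i$; hence $\iota(\gamma)$ normalizes $R_i$, and conjugation acts on the row vector $\xi \in \BA^{n-i}$ parametrizing an element of $R_i$ by $\xi \mapsto \xi\gamma^{-1}$. This linear map has determinant $\det\gamma^{-1} = 1$, so it preserves the Haar measure on $R_i(\BA)$ as well as $R_i(F)$, hence the quotient measure on $[R_i]$. Dually, by \eqref{eq:character-conjugation} conjugation by $\iota(\gamma)$ sends $\psi_y$ to $\psi_{\iota(\gamma)^{-1} y \iota(\gamma)}$, which on the column-vector parametrization $y \leftrightarrow \eta(y) \in F^{n-i}$ of ${}^t\lie r_i(F)$ is the map $\eta \mapsto \gamma^{-1}\eta$. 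Now $\SL_{n-i}(F)$ acts transitively on $F^{n-i} \setminus \{0\}$ (this uses $n-i \ge 2$), and the element $y_0 \in {}^t\lie r_i(F)$ with $\eta(y_0) = \hat Y = {}^t(1,0,\ldots,0)$ satisfies $\psi_{y_0} = \psi_{\alpha_i}$ --- a one-line check from \eqref{eq:character} using that $R_i$ is abelian, so $\log x = x - I$. Its stabilizer in $\iota(\SL_{n-i}(F))$ is $\iota\big((\SL_{n-i}(F))_{\hat Y}\big)$, so $\gamma \mapsto \iota(\gamma)^{-1} y_0 \iota(\gamma)$ descends to a bijection from $\Gamma_i = (\SL_{n-i}(F))_{\hat Y} \bs \SL_{n-i}(F)$ onto $\{\, y \in {}^t\lie r_i(F) : y \ne 0 \,\}$.

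With this dictionary I would re-index the sum on the left of \eqref{eq:row-expansion} by $\gamma \in \Gamma_i$, rewriting $\psi_y^{-1}(x) = \psi_{\alpha_i}^{-1}(\iota(\gamma) x \iota(\gamma)^{-1})$. In each term, substituting $x \mapsto \iota(\gamma)^{-1} x \iota(\gamma)$ (legitimate by the measure remark above) produces the integrand $\varphi(\iota(\gamma)^{-1} x \iota(\gamma) g)\, \psi_{\alpha_i}^{-1}(x)$, and left-multiplying the argument of $\varphi$ by $\iota(\gamma) \in \SL_n(F)$ collapses it to $\varphi(x \iota(\gamma) g)\, \psi_{\alpha_i}^{-1}(x)$, which is exactly the right-hand side of \eqref{eq:row-expansion}. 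The same substitution applied with $\delta \in (\SL_{n-i}(F))_{\hat Y}$, using $\iota(\delta) \in \Stab(\psi_{\alpha_i})$ and automorphy, shows that each summand depends only on the coset of $\gamma$, so the right-hand side is well defined.

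I do not anticipate a genuine obstacle: the argument is the familiar ``Fourier expand, then unfold against a transitively acting Levi'' routine. The only delicate points are bookkeeping ones --- pinning down the conjugation conventions so that the stabilizer of $\psi_{\alpha_i}$ is literally the group $(\SL_{n-i}(F))_{\hat Y}$ appearing in \eqref{eq:Gamma-i}, and checking that conjugation by $\iota(\gamma)$ is measure-preserving on $[R_i]$ (immediate, since $\gamma \in \SL_{n-i}$).
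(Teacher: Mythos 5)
Your proposal is correct and follows essentially the same route as the paper's proof: parametrize the characters of $[R_i]$ by ${}^t\lie r_i(F)$, use the transitive action of $\SL_{n-i}(F)$ on nonzero vectors to identify the nonzero characters with $\Gamma_i = (\SL_{n-i}(F))_{\hat Y}\bs\SL_{n-i}(F)$ via $\gamma\mapsto \iota(\gamma)^{-1}y(\hat Y)\iota(\gamma)$ with $\psi_{y(\hat Y)}=\psi_{\alpha_i}$, then conjugate, change variables in the integral (using that $\iota(\gamma)$ normalizes $R_i$), and invoke automorphy of $\varphi$. The only differences are cosmetic (you additionally spell out measure preservation and well-definedness on cosets, which the paper leaves implicit).
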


We note that the left-hand side of the equation in this lemma equals $\varphi(g)$ up to constant terms corresponding to $y=0$.

\begin{proof}
    With $Y \in \Mat_{(n-i)\times 1}(F)$, we parametrize $y \in {}^t \lie{r}_i(F)$ as
    \begin{equation}
        y(Y) =
        \begin{psmallmatrix}
            0_{i-1} & 0 & 0 \\
            0 & 0 & 0 \\
            0 & Y & 0_{n-i}
        \end{psmallmatrix} \, .
    \end{equation}
    Let $\hat Y = {}^t(1, 0, \ldots, 0) \in \Mat_{(n-i)\times 1}(F)$. Then the surjective map $\SL_{n-i}(F) \to \Mat_{(n-i) \times 1}(F)^\times$ defined by $\gamma \mapsto \gamma^{-1} \hat Y$ gives that 
    \begin{equation}
        \Mat_{(n-i) \times 1}(F)^\times \iso (\SL_{n-i}(F))_{\hat Y}\bs\SL_{n-i}(F) = \Gamma_i
    \end{equation} from \eqref{eq:Gamma-i}.
    We then have that,
    \begin{equation}
        \label{eq:std-row-char-step}
        \sum_{y \neq 0} \, \intl_{[R_i]} \varphi(xg) \psi^{-1}_y(x) \, dx = \sum_{\gamma \in \Gamma_i} \intl_{[R_i]} \varphi(xg) \psi^{-1}_{y(\gamma^{-1} \hat Y)}(x) \, dx \, .
    \end{equation}
    
    We now rewrite the character using that for any $Y \in \Mat_{n-i}(F)$
    \begin{equation}
        y(\gamma^{-1}Y) = \begin{psmallmatrix}
            0_{i-1} & 0 & 0 \\
            0 & 0 & 0 \\
            0 & \gamma^{-1} Y & 0_{n-i}
        \end{psmallmatrix} =
        \begin{psmallmatrix}
            I_{i-1} & 0 & 0 \\
            0 & 1 & 0 \\
            0 & 0 & \gamma^{-1}
        \end{psmallmatrix}
        \begin{psmallmatrix}
            0_{i-1} & 0 & 0 \\
            0 & 0 & 0 \\
            0 & Y & 0_{n-i}
        \end{psmallmatrix}
        \begin{psmallmatrix}
            I_{i-1} & 0 & 0 \\
            0 & 1 & 0 \\
            0 & 0 & \gamma
        \end{psmallmatrix} = 
        l^{-1} y l\,,
    \end{equation}
    where we have introduced $l = \iota(\gamma)$ and denoted $y(Y)$ simply as $y$, which according to \eqref{eq:character-conjugation} gives, for any $x \in [R_i]$, that
    \begin{equation}
        \psi_{ y(\gamma^{-1} Y)}(x) = \psi_{l^{-1} y l}(x) = \psi_y(l x l^{-1}) \, .
    \end{equation}

    The element $l$ is in the Levi subgroup of the parabolic subgroup corresponding to $V_i$, meaning that it preserves $V_i$ under conjugation. In particular, it also normalizes $R_i$ since for $x \in R_i$ parametrized by $X \in \Mat_{1\times n-1}$
    \begin{equation}
        l x(X) l^{-1} = 
        \begin{psmallmatrix}
            I_{i-1} & 0 & 0 \\
            0 & 1 & 0 \\
            0 & 0 & \gamma
        \end{psmallmatrix}
        \begin{psmallmatrix}
            I_{i-1} & 0 & 0 \\
            0 & 1 & X \\
            0 & 0 & I_{n-i}
        \end{psmallmatrix}
        \begin{psmallmatrix}
            I_{i-1} & 0 & 0 \\
            0 & 1 & 0 \\
            0 & 0 & \gamma^{-1}
        \end{psmallmatrix} =
        \begin{psmallmatrix}
            I_{i-1} & 0 & 0 \\
            0 & 1 & X \gamma^{-1} \\
            0 & 0 & I_{n-i}
        \end{psmallmatrix} = x(X \gamma^{-1})\, .
    \end{equation}

    We can thus make the variable substitution $lxl^{-1} \to x$ in \eqref{eq:std-row-char-step} to obtain
    \begin{equation}
        \label{eq:Yhat}
        \sum_{\gamma \in \Gamma_i} \intl_{[R_i]} \varphi(x l g) \psi^{-1}_{y(\hat Y)}(x) \, dx \, ,
    \end{equation}
    where we have used the fact that $\varphi$ is left-invariant under $l^{-1}$. Noting that $\psi_{y(\hat Y)} = \psi_{\alpha_i}$ this proves the lemma.
\end{proof}

We will now state a similar lemma for the last row $R_{n-1}$, that needs to be treated separately. 
The freedom in choosing a character $\psi_0$ in this lemma will be of importance later.

\begin{lem}
    \label{lem:gamma-last-row}
    Let $\varphi$ be an automorphic form on $\SL_n(\BA)$. Then, for any character $\psi_0$ on $N$ trivial on 
    $R_{n-1}$ and along (at least) two adjacent simple roots not including $\alpha_{n-1}$,
    \begin{equation}
        \sum_{\substack{y \in {}^t \lie{r}_{n-1}(F) \\ y \neq 0}} \, \intl_{[R_{n-1}]} \varphi(xg) \psi^{-1}_y(x) \, dx =
            \sum_{\gamma \in \Gamma_{n-1}(\psi_0)} \,  \intl_{[R_{n-1}]} \varphi(x \iota(\gamma) g) \psi^{-1}_{\alpha_{n-1}}(x) \, dx\,,
    \end{equation}
    where $\Gamma_{n-1}(\psi_0)$ is defined in \eqref{eq:Gamma-i}.
\end{lem}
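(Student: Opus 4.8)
The plan is to follow the proof of Lemma~\ref{lem:gamma} as closely as possible, the essential difference being that for $i=n-1$ the group $\SL_{n-i}=\SL_1$ is trivial, so the role that $\iota(\SL_{n-i}(F))$ plays there is taken over by the diagonal torus $T$; accordingly we read $\iota(\gamma)$ for $\gamma\in\Gamma_{n-1}(\psi_0)\subset T$ as $\gamma$ itself, viewed as a diagonal matrix in $\SL_n$.

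First I would unwind the left-hand side. Since $R_{n-1}$ is the one-parameter subgroup carrying the $(n-1,n)$-entry, $[R_{n-1}]\cong F\bs\BA$, and its non-trivial characters are precisely the $\psi_{y(c)}$, $c\in F^\times$, where $\psi_{y(c)}(x)=\psi(c\,x_{n-1,n})$. These form a single $T(F)$-orbit of $\psi_{\alpha_{n-1}}=\psi_{y(1)}$ under $[t.\psi_0](x)=\psi_0(txt^{-1})$, with $t.\psi_{\alpha_{n-1}}=\psi_{y(\alpha_{n-1}(t))}=\psi_{y(t_{n-1}/t_n)}$. Hence the left-hand side equals $\sum_{c\in F^\times}\int_{[R_{n-1}]}\varphi(xg)\psi_{y(c)}^{-1}(x)\,dx$. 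For each $c$ I would pick $t\in T(F)$ with $t_{n-1}/t_n=c$, write $\psi_{y(c)}(x)=\psi_{\alpha_{n-1}}(txt^{-1})$, substitute $x\mapsto txt^{-1}$ (measure-preserving, since $|\alpha_{n-1}(t)|_\BA=1$ by the product formula), and use the left $\SL_n(F)$-invariance of $\varphi$ under $t^{-1}$; this rewrites the $c$-term as $\int_{[R_{n-1}]}\varphi(x\,t\,g)\psi_{\alpha_{n-1}}^{-1}(x)\,dx$. A short computation, in the same spirit as the end of the proof of Lemma~\ref{lem:gamma}, shows this integral depends on $t$ only through its class in $T_{\psi_{\alpha_{n-1}}}(F)\bs T(F)$: for $s\in T_{\psi_{\alpha_{n-1}}}(F)$ one has $\alpha_{n-1}(s)=1$, so $s$ centralizes $R_{n-1}$ and hence $x\,\gamma s=s\,x\,\gamma$ for any $\gamma\in T$, after which invariance under $s\in\SL_n(F)$ removes $s$. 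In particular the right-hand side is well defined as a function of the cosets in $\Gamma_{n-1}(\psi_0)$.

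It remains to match index sets, and this is where the hypotheses on $\psi_0$ enter; this is the step I expect to be the main obstacle. Both sums are governed by the homomorphism $T\to F^\times$, $t\mapsto t_{n-1}/t_n$, whose kernel is $T_{\psi_{\alpha_{n-1}}}$; restricting to $T_{\psi_0}$ gives kernel $T_{\psi_0}\cap T_{\psi_{\alpha_{n-1}}}$. Thus it is enough to show $T_{\psi_0}\to F^\times$ is \emph{surjective}: then a set of representatives for $\Gamma_{n-1}(\psi_0)=(T_{\psi_0}\cap T_{\psi_{\alpha_{n-1}}})\bs T_{\psi_0}$ maps bijectively onto $F^\times$ and may be taken as the elements $t$ of the previous paragraph, which identifies the two sums and finishes the proof. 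For the surjectivity I would use that $t\in T_{\psi_0}$ forces $t_i=t_{i+1}$ exactly for those $i$ at which $\psi_0$ is non-trivial along $\alpha_i$, so $T_{\psi_0}$ consists of the diagonal matrices that are constant on the ``blocks'' cut out by the simple roots along which $\psi_0$ is trivial. Triviality of $\psi_0$ along $\alpha_{n-1}$ ensures that $t_{n-1}$ and $t_n$ lie in distinct blocks, and triviality along two \emph{adjacent} simple roots $\alpha_j,\alpha_{j+1}$ with $j+1\le n-2$ produces a singleton block $\{j+1\}$, distinct from the blocks of $n-1$ and of $n$; given $c\in F^\times$ one then sets $t_{n-1}=c$, $t_n=1$, all other block values equal to $1$, and solves the resulting monomial equation $t_{j+1}\cdot c^{\,e}=1$ (with $e$ the size of the block of $n-1$) for $t_{j+1}\in F^\times$ with no root extraction. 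Had one only assumed triviality along $\alpha_{n-1}$, the image could be a proper subgroup such as $(F^\times)^n$, so this extra hypothesis is genuinely needed.
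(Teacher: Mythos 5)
Your argument is correct and follows essentially the same route as the paper's proof: identify the non-trivial characters of $[R_{n-1}]$ with $F^\times$, prove surjectivity of $T_{\psi_0}\to F^\times$ (equivalently of $h\mapsto h^{-1}y(1)h$) using the two adjacent simple roots along which $\psi_0$ is trivial, identify the quotient by the kernel $T_{\psi_0}\cap T_{\psi_{\alpha_{n-1}}}$ with $\Gamma_{n-1}(\psi_0)$, and finish by conjugating the character, changing variables and invoking automorphy. The only difference is cosmetic: the paper exhibits surjectivity with the single element $\diag(1,\ldots,1,m,1,\ldots,1,1/m)$ (entries at positions $j$ and $n$), whereas you build the element blockwise and fix the determinant at the singleton block, which amounts to the same thing.
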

 
\begin{proof}
    With $Y \in F$, we parametrize $y \in {}^t \lie{r}_{n-1}(F)$ as
    \begin{equation}
        y(Y) =
        \begin{psmallmatrix}
            0_{n-2} & 0 & 0 \\
            0 & 0 & 0 \\
            0 & Y & 0
        \end{psmallmatrix} \, .
    \end{equation}
    
    We recall from page~\pageref{tpsi0} that $T_{\psi_0}$ is the subgroup of diagonal elements in $\SL_n(F)$ stabilizing $\psi_0$ under conjugation of its argument and that $y \in {}^t \lie r_{n-1}(F) \iso F$. 
    The map $T_{\psi_0} \to {}^t \lie r_{n-1}(F)^\times : h \mapsto h^{-1} y(1) h$ is surjective, which can be shown as follows.
    The character $\psi_0$ is, by assumption, trivial along at least two adjacent simple roots not including $\alpha_{n-1}$. 
    Pick such a pair $\alpha_{j-1}$ and $\alpha_j$ where $2 \leq j \leq n-2$ and for an arbitrary $m \in F^\times$ let $h = \diag(1, \ldots, 1, m, 1, \ldots 1, 1/m)$ where the first non-trivial element is at the $j$th position. 
    Then $h \in T_{\psi_0}$ since $y_0 \in {}^t \lie n$ corresponding to $\psi_0$ is zero at both rows and columns $j$ and $n$ and $h \mapsto y(m)$ 
    
    Because of \eqref{eq:character-conjugation} we have that the centralizer of $y(1)$ in $T$ is $T_{\psi_{\alpha_{n-1}}}$, and thus,
    \begin{equation}
        {}^t \lie r_{n-1}(F) \iso (T_{\psi_0} \cap T_{\psi_{\alpha_{n-1}}}) \bs T_{\psi_0} = \Gamma_{n-1}(\psi_0) \, .
    \end{equation}

    We then have that 
    \begin{align}
         &   \sum_{\substack{y \in {}^t \lie{r}_{n-1}(F) \\ y \neq 0}} \, \intl_{[R_{n-1}]} \varphi(xg) \psi^{-1}_y(x) \, dx = \sum_{\gamma \in \Gamma_{n-1}(\psi_0)} \, \intl_{[R_{n-1}]} \varphi(xg) \psi^{-1}_{\gamma^{-1} y(1) \gamma}(x) \, dx \\
            &\quad= \sum_{\gamma \in \Gamma_{n-1}(\psi_0)} \, \intl_{[R_{n-1}]} \varphi(xg) \psi^{-1}_{y(1)}(\gamma x \gamma^{-1}) = \sum_{\gamma \in \Gamma_{n-1}(\psi_0)} \, \intl_{[R_{n-1}]} \varphi(x \gamma g) \psi^{-1}_{\alpha_{n-1}}(x) \, ,
    \end{align}
    after making the variable change $\gamma x \gamma^{-1} \to x$, which concludes the proof.
\end{proof}

\begin{rmk}
    \label{rem:character-condition}
    For $n \geq 5$ any character $\psi_0$ on $N$ that is non-trivial along at most a single simple root which is not $\alpha_{n-1}$ satisfies the character condition in lemma~\ref{lem:gamma-last-row}.
\end{rmk}

The following lemma will be used to iteratively expand in rows. The lemma, which is valid for any automorphic representation, will be followed by two corollaries that specialize to the minimal and next-to-minimal representations respectively.

\begin{lem}
    \label{lem:Vi-to-Vi+1}
    Let $\varphi$ be an automorphic form on $\SL_n(\BA)$, $1 \leq i \leq n-2$, and $\psi_0$ be a character on $N$ trivial on the complement of $V_i$ in $N$. For $i = n-2$ we also require that $\psi_0$ is trivial along (at least) two adjacent simple roots not including $\alpha_{n-1}$. Then, 
    \begin{equation}
        \label{eq:Vi-expansion}
        \begin{multlined}
            \intl_{[V_i]} \varphi(vg) \psi^{-1}_0(v) dv = \intl_{[V_{i+1}]}  \varphi(vg) \psi^{-1}_0(v) \, dv +{} \\ 
            \quad + \sum_{\gamma \in \Gamma_{i+1}(\psi_0)} \, \intl_{[V_{i+1}]} \varphi(v \iota(\gamma) g) \psi^{-1}_0(v) \psi^{-1}_{\alpha_{i+1}}(v) \, dv \, .
        \end{multlined}
    \end{equation}
\end{lem}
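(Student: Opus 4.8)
The plan is to establish \eqref{eq:Vi-expansion} by Fourier-expanding $\varphi$ along the abelian unipotent subgroup $R_{i+1}$ and then absorbing this expansion into the integral over $[V_i]$. Recall that $V_i$, being the unipotent radical of a parabolic subgroup of $\SL_n$, is normal in $V_{i+1}$, that $R_{i+1}\iso V_i\bs V_{i+1}$ is an abelian subgroup, and hence that $V_{i+1}=R_{i+1}\ltimes V_i$: every $w\in V_{i+1}$ factors uniquely as $w=xv$ with $x\in R_{i+1}$ and $v\in V_i$, the factorization being compatible with Haar measures and with the passage to $F$-rational quotients, so that $\int_{[V_{i+1}]}f(w)\,dw=\int_{[V_i]}\int_{[R_{i+1}]}f(xv)\,dx\,dv$. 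Since $\varphi$ is left $R_{i+1}(F)$-invariant, at any point $vg$ its Fourier expansion along $[R_{i+1}]$ reads
\[
\varphi(vg)=\int_{[R_{i+1}]}\varphi(xvg)\,dx+\sum_{\substack{y\in{}^t\lie{r}_{i+1}(F)\\ y\neq0}}\int_{[R_{i+1}]}\varphi(xvg)\psi_y^{-1}(x)\,dx,
\]
with the series converging absolutely. Multiplying by $\psi_0^{-1}(v)$ and integrating over $v\in[V_i]$ (the interchange of sum and integral being justified by the rapid decay of Fourier coefficients of automorphic forms together with the compactness of $[V_i]$) splits the left-hand side of \eqref{eq:Vi-expansion} into a $y=0$ contribution plus a sum over $y\neq0$.

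For the $y=0$ contribution I would recombine the $[R_{i+1}]$- and $[V_i]$-integrals into a single integral over $[V_{i+1}]$. The hypothesis that $\psi_0$ be trivial on the complement of $V_i$ in $N$ means exactly that $\psi_0$, as a character of $N/[N,N]$, is supported on the simple root subgroups $\alpha_1,\ldots,\alpha_i$; in particular it is trivial on $R_{i+1}$, so it restricts to the character of $V_{i+1}$ occurring on the right of \eqref{eq:Vi-expansion}, and the $y=0$ term is precisely $\int_{[V_{i+1}]}\varphi(wg)\psi_0^{-1}(w)\,dw$. For the $y\neq0$ sum, the key step is to invoke Lemma~\ref{lem:gamma}, when $i\leq n-3$, or Lemma~\ref{lem:gamma-last-row} applied with the character $\psi_0$, when $i=n-2$ --- and it is exactly here that the extra adjacency hypothesis on $\psi_0$ in the case $i=n-2$ is used, since it is what makes Lemma~\ref{lem:gamma-last-row} applicable. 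Applied at the translated point $vg$, this replaces $\sum_{y\neq0}\int_{[R_{i+1}]}\varphi(xvg)\psi_y^{-1}(x)\,dx$ by $\sum_{\gamma\in\Gamma_{i+1}(\psi_0)}\int_{[R_{i+1}]}\varphi(x\iota(\gamma)vg)\psi_{\alpha_{i+1}}^{-1}(x)\,dx$, where one notes $\Gamma_{i+1}(\psi_0)=\Gamma_{i+1}$ whenever $i+1\leq n-2$, consistently with \eqref{eq:Gamma-i}. I would then move $\iota(\gamma)$ across the $[V_i]$-variable by writing $\iota(\gamma)v=(\iota(\gamma)v\iota(\gamma)^{-1})\iota(\gamma)$ and substituting $v\mapsto\iota(\gamma)v\iota(\gamma)^{-1}$ --- legitimate because $\iota(\gamma)=\diag(I_{i+1},\gamma)$ normalizes $V_i$ and fixes $\psi_0$ under conjugation, the latter since the root subgroups $\alpha_1,\ldots,\alpha_i$ lie inside the top-left $(i+1)\times(i+1)$ identity block of $\iota(\gamma)$ (and, when $i=n-2$, because $\gamma\in T_{\psi_0}(F)$ by the definition of $\Gamma_{n-1}(\psi_0)$). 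Recombining $x\in R_{i+1}$ with the new $V_i$-variable into $w\in V_{i+1}$ and using $\psi_0(w)=\psi_0(v)$ and $\psi_{\alpha_{i+1}}(w)=\psi_{\alpha_{i+1}}(x)$ for $w=xv$ then produces $\sum_{\gamma\in\Gamma_{i+1}(\psi_0)}\int_{[V_{i+1}]}\varphi(w\iota(\gamma)g)\psi_0^{-1}(w)\psi_{\alpha_{i+1}}^{-1}(w)\,dw$, which is the second term on the right of \eqref{eq:Vi-expansion}.

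I do not expect any individual step to be genuinely hard; the content lies in the bookkeeping. The points requiring real care are: (i) checking that $\iota(\gamma)$ --- or the diagonal element $\gamma$ when $i=n-2$ --- fixes $\psi_0$ under conjugation, which is precisely where the hypotheses on $\psi_0$ (triviality on the complement of $V_i$, and for $i=n-2$ also along two adjacent simple roots other than $\alpha_{n-1}$) are used; (ii) the compatibility of Haar measures and $F$-rational structures with the decomposition $V_{i+1}=R_{i+1}\ltimes V_i$, which underlies every passage between $\int_{[V_{i+1}]}$ and $\int_{[V_i]}\int_{[R_{i+1}]}$; and (iii) the interchange of the absolutely convergent Fourier series with the $[V_i]$-integral. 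The one structural subtlety is the dichotomy $i\leq n-3$ versus $i=n-2$: for the last row $R_{n-1}$ the stabilizer $\Gamma_{n-1}(\psi_0)$ is a torus and one must invoke Lemma~\ref{lem:gamma-last-row} in place of Lemma~\ref{lem:gamma}, which is the reason the stronger hypothesis on $\psi_0$ is imposed in that boundary case.
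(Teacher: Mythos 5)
Your proposal is correct and follows essentially the same route as the paper's proof: Fourier expansion along $R_{i+1}$, invoking Lemma~\ref{lem:gamma} (or Lemma~\ref{lem:gamma-last-row} with $\psi_0$ when $i=n-2$), conjugating $\iota(\gamma)$ past the $V_i$-variable using that it normalizes $V_i$ and fixes $\psi_0$, and recombining $R_{i+1}V_i=V_{i+1}$ via $\psi_0(xv)=\psi_0(v)$ and $\psi_{\alpha_{i+1}}(xv)=\psi_{\alpha_{i+1}}(x)$. The only difference is cosmetic: the paper verifies the character-invariance claims by explicit block computations that you state rather than carry out.
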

\begin{proof} 
    For $x \in R_{i+1}(F)$ and $v \in V_i(\ads)$ we have that $\varphi(xvg) = \varphi(vg)$ and can thus Fourier expand along the abelian unipotent $R_{i+1}$ as
\begin{equation}
    \label{eq:further-expansion}
    \varphi(vg) = \sum_{y \in {}^t \lie r_{i+1}(F)} \intl_{[R_{i+1}]} \varphi(xvg) \psi^{-1}_y(x) \, dx \, .
\end{equation}
Then, using lemma \ref{lem:gamma} (for $i+1 \leq n-2$) or lemma~\ref{lem:gamma-last-row} (for $i+1 = n-1$)
\begin{equation}
    \varphi(vg) = \intl_{[R_{i+1}]} \varphi(xvg) \, dx + \sum_{\gamma \in \Gamma_{i+1}(\psi_0)} \, \intl_{[R_{i+1}]} \varphi(x\iota(\gamma)vg) \psi^{-1}_{\alpha_{i+1}}(x) \, dx\,
    .
\end{equation}
    
Let $v \in V_{i}$ be parametrized as
\begin{equation}
    v = 
    \begin{pmatrix}
        A & B \\
        0 & I_{n-i-1}
    \end{pmatrix}\,,
\end{equation}
where $A \in \Mat_{(i+1) \times (i+1)}$ is upper unitriangular and $B \in \Mat_{(i+1) \times (n-i-1)}$ with the elements in the last row being zero. 
Since $B$ does not intersect the abelianization $[N,N]\bs N$ (that is, the Lie algebra of $B$ does not contain any generator of a simple root), we have, by assumption, that $\psi_0$ only depends on $A$. 
Similarly, we parametrize $x \in R_{i+1}$ as
\begin{equation}
    x =
    \begin{pmatrix}
        I_{i+1} & B' \\
        0 & I_{n-i-1}
    \end{pmatrix}\,,
\end{equation}
where $B' \in \Mat{(i+1) \times (n-i-1)}$ with non-zero elements only in the last row. 
Then,
\begin{equation}
    xv =
    \begin{pmatrix}
        A & B + B' \\
        0 & I_{n-i-1}
    \end{pmatrix} \, ,
\end{equation}
which means that $\psi_0(v) = \psi_0(xv)$, and since $\psi_{\alpha_{i+1}}$ only depends on the first column in $B'$ which is the same as for $B + B'$, we also have that $\psi_{\alpha_{i+1}}(x) = \psi_{\alpha_{i+1}}(xv)$.

\begin{itemize-small}
    \item For $1 \leq i \leq n-3$ with $\gamma \in \Gamma_{i+1}$, $l = \iota(\gamma)$ is in the Levi subgroup corresponding to $V_{i}$ and we will now show that $\psi_0(l^{-1} v l) = \psi_0(v)$ for $v \in [V_{i}]$. 
We have that
\begin{equation}
    l^{-1}vl = 
    \begin{pmatrix}
        I_{i+1} & 0 \\
        0 & \gamma^{-1}
    \end{pmatrix}
    \begin{pmatrix}
        A & B \\
        0 & I_{n-i-1}
    \end{pmatrix}
    \begin{pmatrix}
        I_{i+1} & 0 \\
        0 & \gamma
    \end{pmatrix} = 
    \begin{pmatrix}
        A & B \gamma \\
        0 & I_{n-i-1}
    \end{pmatrix}
\end{equation}
and $\psi_0(v)$ only depends on $A$. 
\item For $i = n-2$ with $\gamma \in \Gamma_{n-1}(\psi_0)$, $l = \iota(\gamma) = \gamma$ is in the stabilizer $T_{\psi_0}$ which normalizes $V_i$ and, by definition, means that $\psi_0(v) = \psi_0(lvl^{-1})$.
\end{itemize-small}

Thus, for $1 \leq i \leq n-2$,
\begin{equation}
    \begin{multlined}
        \intl_{[V_i]} \varphi(vg) \psi^{-1}_0(v) dv = \intl_{[V_i]} \intl_{[R_{i+1}]}  \varphi(xvg) \psi^{-1}_0(v) \, dx \,dv +{} \\ + \sum_{\gamma \in \Gamma_{i+1}(\psi_0)} \, \intl_{[V_i]} \intl_{[R_{i+1}]} \varphi(x v l g) \psi^{-1}_{\alpha_{i+1}}(x) \psi^{-1}_0(v) \, dx  \,dv\,,
    \end{multlined}
\end{equation}
where we have made the variable change $l v l^{-1} \to v$.

Using that $R_{i+1} V_i = V_{i+1}$ the above expressions simplifies to
\begin{equation} 
    \intl_{[V_{i+1}]}  \varphi(vg) \psi^{-1}_0(v) \, dv + \sum_{\gamma \in \Gamma_{i+1}(\psi_0)} \, \intl_{[V_{i+1}]} \varphi(v \iota(\gamma) g) \psi^{-1}_0(v) \psi^{-1}_{\alpha_{i+1}}(v) \, dv \, .
\end{equation}
\end{proof}

\begin{cor}
    \label{cor:min-row}
    Let $\pi$ be an irreducible minimal automorphic representation of $\SL_n(\BA)$, $\varphi \in \pi$, and $\psi_0$ be a character on $N$ trivial on the complement of $V_i$ in $N$, $1 \leq i \leq n-2$. 
    Then, $\mathcal{F}_{\psi_0} \defeq \int_{[V_i]} \varphi(vg) \psi^{-1}_0 \, dv$ can be further expanded as follows.

    \begin{enumerate}[label=\textnormal{(\roman*)}, leftmargin=0cm,itemindent=1.75\parindent,labelwidth=\itemindent,labelsep=0mm,align=left]
            \item \label{itm:min-trivial} If $\psi_0 = 1$, then
            \begin{flalign}
                \qquad \mathcal{F}_{\psi_0} &= \intl_{[V_{i+1}]} \varphi(vg) dv + \sum_{\gamma \in \Gamma_{i+1}} \intl_{[V_{i+1}]} \varphi(v \iota(\gamma) g) \psi^{-1}_{\alpha_{i+1}}(v) \,dv, &
            \end{flalign}
             where $\Gamma_{i+1}(\psi_0)$ with $\Gamma_{i+1} = \Gamma_{i+1}(1)$ 
    is defined in \eqref{eq:Gamma-i}.
            \item \label{itm:min-single} If $\psi_0 = \psi_{\alpha_j}$ $(1 \leq j \leq i)$, then
            \begin{flalign}
                \qquad \mathcal{F}_{\psi_0} &= \intl_{[V_{i+1}]} \varphi(vg) \psi^{-1}_0(v) \,dv.&
            \end{flalign}
    \end{enumerate}
\end{cor}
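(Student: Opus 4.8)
The plan is to obtain both parts from Lemma~\ref{lem:Vi-to-Vi+1}, invoking the smallness of $\pi$ only to kill the extra sum in case~\ref{itm:min-single}. First I would check that Lemma~\ref{lem:Vi-to-Vi+1} applies: $\psi_0$ is trivial on the complement of $V_i$ in $N$ by hypothesis (for $\psi_0=\psi_{\alpha_j}$ with $j\le i$ this is because the $\alpha_j$-entry lies in row $j\le i$), and in the boundary case $i=n-2$ the further condition that $\psi_0$ be trivial along two adjacent simple roots other than $\alpha_{n-1}$ is automatic for $\psi_0=1$ (as $n\ge 5$) and holds for $\psi_0=\psi_{\alpha_j}$ by Remark~\ref{rem:character-condition}; alternatively one just repeats the single-row Fourier expansion from the proof of Lemma~\ref{lem:Vi-to-Vi+1} directly. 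The lemma then gives
\begin{equation*}
\intl_{[V_i]}\varphi(vg)\psi_0^{-1}(v)\,dv=\intl_{[V_{i+1}]}\varphi(vg)\psi_0^{-1}(v)\,dv+\sum_{\gamma\in\Gamma_{i+1}(\psi_0)}\intl_{[V_{i+1}]}\varphi(v\iota(\gamma)g)\psi_0^{-1}(v)\psi_{\alpha_{i+1}}^{-1}(v)\,dv\,.
\end{equation*}
For $\psi_0=1$ we have $\Gamma_{i+1}(1)=\Gamma_{i+1}$, and this is already the assertion of case~\ref{itm:min-trivial} (no smallness of $\pi$ is used).

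For case~\ref{itm:min-single}, with $\psi_0=\psi_{\alpha_j}$ and $1\le j\le i$, the character occurring in the extra sum is $\psi_{\alpha_j}\psi_{\alpha_{i+1}}$ on $V_{i+1}$. I would observe that $V_{i+1}=N_{s_{V_{i+1}}}$ for the rational semisimple element $s_{V_{i+1}}$ of~\eqref{eq:s-Vi}, and that by~\eqref{eq:character} this character equals $\psi_u$ with $u=E_{j+1,j}+E_{i+2,i+1}$; a short eigenvalue computation for $s_{V_{i+1}}$ (using $j+1\le i+1$) puts both root vectors in $\lie{g}^{s_{V_{i+1}}}_{-2}$, so $(s_{V_{i+1}},u)$ is a Whittaker pair and each summand equals $\mathcal F_{s_{V_{i+1}},u}(\varphi)$ evaluated at $\iota(\gamma)g$. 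Since $j\le i<i+1$, the index blocks $\{j,j+1\}$ and $\{i+1,i+2\}$ are disjoint when $j<i$ and meet in $i+1$ when $j=i$; hence $u$ lies in the nilpotent orbit $[2^21^{n-4}]$ if $j<i$ and in $[31^{n-3}]$ if $j=i$. Both of these strictly dominate $[21^{n-2}]$, so neither lies in $\mathcal{WF}(\pi)$ for $\pi$ minimal. Choosing any neutral Whittaker pair $(h,u)$, for which $\mathcal F_{h,u}(\pi)=0$ by the definition of $\mathcal{WF}$, Theorem~\ref{thm:ggsglobal} gives that $\mathcal F_{s_{V_{i+1}},u}(\varphi)$ vanishes identically on $\SL_n(\BA)$; hence every term of the extra sum is zero and only $\int_{[V_{i+1}]}\varphi(vg)\psi_0^{-1}(v)\,dv$ survives, which is case~\ref{itm:min-single}.

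I expect the orbit identification to be the crux of the argument: one must match $\psi_{\alpha_j}\psi_{\alpha_{i+1}}$ with the Whittaker-pair datum $(s_{V_{i+1}},u)$ and then keep track of the dichotomy between $j<i$ (two disjoint size-$2$ Jordan blocks, orbit $[2^21^{n-4}]$) and $j=i$ (a single size-$3$ Jordan chain $e_i\to e_{i+1}\to e_{i+2}\to 0$, orbit $[31^{n-3}]$); once $u$ is known to lie strictly above the minimal orbit, the vanishing is an immediate application of Theorem~\ref{thm:ggsglobal}. The only other point requiring care is the character hypothesis of Lemma~\ref{lem:Vi-to-Vi+1} in the boundary case $i=n-2$, which is covered by Remark~\ref{rem:character-condition} (or by expanding along $R_{i+1}$ directly).
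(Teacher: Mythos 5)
Your proposal is correct and follows essentially the same route as the paper: apply Lemma~\ref{lem:Vi-to-Vi+1} (with the last-row character condition supplied by Remark~\ref{rem:character-condition}), read off case~\ref{itm:min-trivial} directly, and kill the extra sum in case~\ref{itm:min-single} by identifying the character $\psi_{\alpha_j}\psi_{\alpha_{i+1}}$ with $\psi_u$ for the Whittaker pair $(s_{V_{i+1}},u)$ and invoking Theorem~\ref{thm:ggsglobal}. Your explicit dichotomy $j<i$ (orbit $[2^21^{n-4}]$) versus $j=i$ (orbit $[31^{n-3}]$) is in fact slightly more careful than the paper's phrasing, which calls $u$ ``next-to-minimal'' in all cases, but either way $u$ lies strictly above the minimal orbit and the vanishing conclusion is the same.
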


\begin{proof}
    We will use lemma~\ref{lem:Vi-to-Vi+1} where all the considered $\psi_0$ satisfy the character condition for the last row according to remark~\ref{rem:character-condition}.
    
    For $\psi_0 = 1$, the expression is already in the form of lemma \ref{lem:Vi-to-Vi+1}. 
    This proves case \ref{itm:min-trivial}.
    
    For $\psi_0 = \psi_{\alpha_j}$ with $1 \leq j \leq i$ we have that $\psi_0(v) \psi_{\alpha_{i+1}}(v) = \psi_{\alpha_j, \alpha_{i+1}}(v) = \psi_u(v)$ for some $u \in \lie g$ which is in the next-to-minimal orbit. 
    Theorem \ref{thm:ggsglobal} with the Whittaker pair $(s_{V_{i+1}}, u)$ gives that $\mathcal{F}_{s_{V_{i+1}}, u}(\varphi)$ vanishes for $\varphi$ in the minimal representation which leaves only the constant (or trivial) mode in lemma~\ref{lem:Vi-to-Vi+1}. 
    This proves case \ref{itm:min-single}.
\end{proof}

\pagebreak[2]

\begin{cor}
    \label{cor:ntm-row}
    Let $\pi$ be an irreducible next-to-minimal automorphic representation of $\SL_n(\BA)$, $\varphi \in \pi$, and $\psi_0$ be a character on $N$ trivial on the complement of $V_i$ in $N$, $1 \leq i \leq n-2$.
    Then, $\mathcal{F}_{\psi_0} \defeq \int_{[V_i]} \varphi(vg) \psi^{-1}_0 \, dv$ can be further expanded as follows.
    
    \begin{enumerate}[label=\textnormal{(\roman*)}, leftmargin=0cm,itemindent=1.75\parindent,labelwidth=\itemindent,labelsep=0mm,align=left]
        \item \label{itm:ntm-trivial} If $\psi_0 = 1$, then
        \begin{flalign}
            \qquad \mathcal{F}_{\psi_0} &= \intl_{[V_{i+1}]} \varphi(vg) dv + \sum_{\gamma \in \Gamma_{i+1}} \intl_{[V_{i+1}]} \varphi(v\iota(\gamma) g) \psi^{-1}_{\alpha_{i+1}}(v) \,dv\,. &
        \end{flalign}
        \item \label{itm:ntm-single-j} If $\psi_0 = \psi_{\alpha_j}$ $(1 \leq j < i)$, then
        \begin{flalign}
            \qquad \mathcal{F}_{\psi_0} &= \intl_{[V_{i+1}]} \varphi(vg) \psi^{-1}_{\alpha_j}(v) dv + \hspace{-1.5em} \sum_{\gamma \in \Gamma_{i+1}(\psi_{\alpha_j}\!)} \, \intl_{[V_{i+1}]} \varphi(v\iota(\gamma) g) \psi^{-1}_{\alpha_j, \alpha_{i+1}}(v) \,dv\,. &
        \end{flalign}
        \item \label{itm:ntm-single-i} If $\psi_0 = \psi_{\alpha_i}$, then
        \begin{flalign}
            \qquad \mathcal{F}_{\psi_0} &= \intl_{[V_{i+1}]} \varphi(vg) \psi^{-1}_{\alpha_i}(v) \,dv\,. &
        \end{flalign}
        \item \label{itm:ntm-double} If $\psi_0 = \psi_{\alpha_j, \alpha_k}$ $(1 < j+1 < k \leq i)$, then
        \begin{flalign}
            \qquad \mathcal{F}_{\psi_0} &= \intl_{[V_{i+1}]} \varphi(vg) \psi^{-1}_{\alpha_j, \alpha_k}(v) \,dv\,. &
        \end{flalign}
    \end{enumerate}
    Where $\Gamma_{i+1}(\psi_0)$ with $\Gamma_{i+1} = \Gamma_{i+1}(1)$ is defined in \eqref{eq:Gamma-i}.
\end{cor}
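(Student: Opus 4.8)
\emph{Overall strategy.} The plan is to feed cases~\ref{itm:ntm-trivial}--\ref{itm:ntm-single-i} into Lemma~\ref{lem:Vi-to-Vi+1} and then use Theorem~\ref{thm:ggsglobal} to delete every term whose character corresponds to a nilpotent orbit outside $\mathcal{WF}(\pi_{\text{ntm}})$ (the set of orbits $\leq [2^21^{n-4}]$), while case~\ref{itm:ntm-double} --- where $\psi_0 = \psi_{\alpha_j,\alpha_k}$ is non-trivial along two simple roots and the hypotheses of Lemma~\ref{lem:Vi-to-Vi+1} are not directly covered by Remark~\ref{rem:character-condition} --- will be handled by redoing, by hand, the Fourier expansion from the proof of that lemma. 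I would use two running facts throughout. First, on $N_{s_{V_{i+1}}} = V_{i+1}$ a character $\psi_{\alpha_{i_1},\ldots,\alpha_{i_k}}$ equals $\psi_u$ for $u = \sum_a E_{i_a+1,\,i_a} \in \lie g^{s_{V_{i+1}}}_{-2}$, so that $\int_{[V_{i+1}]} \varphi(vg)\,\psi^{-1}_{\alpha_{i_1},\ldots}(v)\,dv = \mathcal F_{s_{V_{i+1}},u}(\varphi)(g)$, which vanishes identically as soon as $\mathcal O_u \notin \mathcal{WF}(\pi_{\text{ntm}})$ --- this is Theorem~\ref{thm:ggsglobal} applied to the Whittaker pair $(s_{V_{i+1}}, u)$. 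Second, if $\mathcal O_{[\ul p]} \leq \mathcal O_{[2^21^{n-4}]}$ then $[\ul p]$ has at least $n-2$ parts, i.e. $\rank J_{\ul p} \leq 2$; contrapositively, any nilpotent of rank $\geq 3$ gives an identically vanishing coefficient.

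\emph{Cases \ref{itm:ntm-trivial}, \ref{itm:ntm-single-j}, \ref{itm:ntm-single-i}.} In all three, $\psi_0$ is non-trivial along at most one simple root, which is never $\alpha_{n-1}$ (since $j < i \leq n-2$ in~\ref{itm:ntm-single-j} and $i \leq n-2$ in~\ref{itm:ntm-single-i}); so by Remark~\ref{rem:character-condition} the hypotheses of Lemma~\ref{lem:Vi-to-Vi+1} hold and I would apply it verbatim. For~\ref{itm:ntm-trivial} ($\psi_0 = 1$) this is already the asserted formula. For~\ref{itm:ntm-single-j} ($\psi_0 = \psi_{\alpha_j}$, $j < i$) the resulting $\Gamma_{i+1}$-term carries the character $\psi_{\alpha_j}\psi_{\alpha_{i+1}} = \psi_{\alpha_j,\alpha_{i+1}}$, and the corresponding $u = E_{j+1,j} + E_{i+2,i+1}$ has rank $2$ with orbit $[2^21^{n-4}] \in \mathcal{WF}(\pi_{\text{ntm}})$ (here $\alpha_j$ and $\alpha_{i+1}$ are non-adjacent as $j<i$), so nothing is dropped and the formula is as claimed. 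For~\ref{itm:ntm-single-i} ($\psi_0 = \psi_{\alpha_i}$) the $\Gamma_{i+1}$-term carries $\psi_{\alpha_i,\alpha_{i+1}} = \psi_u$ with $u = E_{i+1,i} + E_{i+2,i+1}$, a single size-$3$ Jordan block with orbit $[31^{n-3}]$; since $[31^{n-3}] \not\leq [2^21^{n-4}]$ (first partial sums $3 > 2$), the whole $\Gamma_{i+1}$-sum vanishes by the first running fact and only the $[V_{i+1}]$-integral survives.

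\emph{Case~\ref{itm:ntm-double}.} Here $\psi_0 = \psi_{\alpha_j,\alpha_k}$ with $1 < j+1 < k \leq i$. I would run the Fourier expansion from the proof of Lemma~\ref{lem:Vi-to-Vi+1} \emph{without} the subsequent appeal to Lemma~\ref{lem:gamma-last-row} (which is what needs the character condition): Fourier-expand $\varphi(xvg)$ along $x \in R_{i+1}$ using left $R_{i+1}(F)$-invariance, note as there that $\psi_0\psi_y$ descends to a character of $V_{i+1}$ for each $y \in {}^t\lie r_{i+1}(F)$, and use $V_{i+1} = V_i R_{i+1}$ to obtain $\mathcal F_{\psi_0}(g) = \sum_{y \in {}^t\lie r_{i+1}(F)} \mathcal F_{s_{V_{i+1}},\,u_0+y}(\varphi)(g)$, where $u_0 = E_{j+1,j} + E_{k+1,k}$ represents $\psi_0$ and $y = \sum_{l=i+2}^n Y_l E_{l,i+1}$. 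The $y=0$ summand is exactly $\int_{[V_{i+1}]} \varphi(vg)\,\psi_{\alpha_j,\alpha_k}^{-1}(v)\,dv$. For $y \neq 0$, the matrix $u_0 + y$ sends the basis vectors $e_j, e_k, e_{i+1}$ (distinct since $j < k \leq i < i+1$) to $e_{j+1}, e_{k+1}, \sum_l Y_l e_l$, whose supports $\{j+1\}$, $\{k+1\}$, $\subseteq\{i+2,\ldots,n\}$ are pairwise disjoint (using $j+1 \leq k$ and $k+1 \leq i+1$), so $\rank(u_0+y) \geq 3$, hence $\mathcal O_{u_0+y} \not\leq [2^21^{n-4}]$ and the summand vanishes. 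Only $y=0$ remains, which is the claim.

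\emph{Main obstacle.} I expect case~\ref{itm:ntm-double} to be the only genuine work: recognising that Lemma~\ref{lem:Vi-to-Vi+1} is not literally applicable for a two-root $\psi_0$ when $i = n-2$ (this already bites for $\SL_5$, $\psi_{\alpha_1,\alpha_3}$), re-deriving its Fourier step along $R_{i+1}$ directly, and then pinning down exactly which matrix entries the residual nilpotents $u_0+y$ occupy in order to certify $\rank(u_0+y) \geq 3$ and invoke the wave-front-set vanishing. Everything else is routine bookkeeping against the dominance order on partitions.
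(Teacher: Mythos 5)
Your proposal is correct and follows essentially the same route as the paper: cases \ref{itm:ntm-trivial}--\ref{itm:ntm-single-i} via remark~\ref{rem:character-condition} and lemma~\ref{lem:Vi-to-Vi+1} with the non-surviving terms killed by theorem~\ref{thm:ggsglobal}, and case \ref{itm:ntm-double} by expanding along $R_{i+1}$ and discarding all $y\neq 0$ because $u_0+y$ lies outside the closure of $[2^21^{n-4}]$. The only (harmless) deviation is that you run the direct $R_{i+1}$-expansion uniformly in $1\leq i\leq n-2$ for case \ref{itm:ntm-double}, whereas the paper applies lemma~\ref{lem:Vi-to-Vi+1} for $i\leq n-3$ and reserves the direct last-row expansion for $i=n-2$; your explicit rank-$\geq 3$ verification of $u_0+y$ makes precise what the paper leaves implicit.
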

\begin{proof}
    We will use lemma~\ref{lem:Vi-to-Vi+1} where the considered $\psi_0$ in cases \ref{itm:ntm-trivial}--\ref{itm:ntm-single-i} satisfy the character condition for the last row according to remark~\ref{rem:character-condition}.
    
    \begin{itemize-small}
    \item For $\psi_0 = 1$, the expression is already in the form of lemma~\ref{lem:Vi-to-Vi+1}.
    This proves case \ref{itm:ntm-trivial}.

    \item For $\psi_0 = \psi_{\alpha_j}$ with $1 \leq j < i$ we get that $\psi_0(v) \psi_{\alpha_{i+1}}(v) = \psi_{\alpha_j, \alpha_{i+1}}(v)$.
    This proves case \ref{itm:ntm-single-j}.

    \item For $\psi_0 = \psi_{\alpha_i}$ we get that $\psi_0(v) \psi_{\alpha_{i+1}}(v) = \psi_{\alpha_i, \alpha_{i+1}}(v) = \psi_u(v)$ for some $u \in \lie g$ belonging to an orbit higher than the next-to-minimal. 
    Theorem \ref{thm:ggsglobal} with the Whittaker pair $(s_{V_{i+1}}, u)$ gives that $\mathcal{F}_{s_{V_{i+1}}, u}(\varphi)$ vanishes both for $\varphi$ in the minimal and next-to-minimal representations which leaves only the constant mode in lemma~\ref{lem:Vi-to-Vi+1}. 
    This proves case \ref{itm:ntm-single-i}.

    \item Lastly, for $\psi_0 = \psi_{\alpha_j,\alpha_k}$ with $2 \leq j+1 < k \leq i$ we first consider $i \leq n-3$ with lemma~\ref{lem:Vi-to-Vi+1}. 
    We get that $\psi_0(v) \psi_{\alpha_{i+1}}(v) = \psi_{\alpha_j, \alpha_k, \alpha_{i+1}}(v) = \psi_u(v)$ for some $u \in \lie g$ belonging to an orbit higher than the next-to-minimal.
    Theorem \ref{thm:ggsglobal} with the Whittaker pair $(s_{V_{i+1}}, u)$ gives that $\mathcal{F}_{s_{V_{i+1}}, u}(\varphi)$ vanishes for $\varphi$ in next-to-minimal representation which leaves only the first term in \eqref{eq:Vi-expansion}. 
    \end{itemize-small}
    For $i = n-2$, we expand along the last row and obtain a sum over characters $\psi_u = \psi_0 \psi_y$ on $N$ for all $y \in {}^t \lie r_{n-1}(F)$ where only $y = 0$ gives a $u \in \lie g$ belonging to an orbrit in the closure of the next-to-minimal orbit.
    Again, using theorem~\ref{thm:ggsglobal} only the constant mode remains.
    This proves case \ref{itm:ntm-double} and completes the proof.
\end{proof}

\begin{proof}[\bf Proof of theorem \ref{thm:varphi}]
    Since $\varphi(x_1g) = \varphi(g)$ for $x_1 \in V_1(F)$ we can make a Fourier expansion on $V_1$ and then use lemma \ref{lem:gamma} to obtain
    \begin{equation}
        \label{eq:ThmA-first-row}
        \varphi(g) = \intl_{[V_1]} \varphi(v g) \, dv + \sum_{\gamma_1 \in \Gamma_1} \intl_{[V_1]} \varphi(v \iota(\gamma_1) g) \psi^{-1}_{\alpha_1}(v) \, dv \, .
    \end{equation}
    
    We will now make an iteration in the rows of the nilpotent, starting with the row $i = 1$ and continue until we reach the last row $i = n - 1$.
    
    \begin{itemize-small}
    \item For case \ref{itm:varphi-min}, that is, with $\varphi$ in the minimal representation, the first step, using corollary~\ref{cor:min-row}, is
    \begin{multline*}
        \varphi(g) = \intl{[V_2]} \varphi(vg) \, dv + \sum_{\gamma_2 \in \Gamma_2} \, \intl_{[V_2]} \varphi(v \iota(\gamma_2) g)\psi^{-1}_{\alpha_2}(v) \, dv +{} \\[-1em]
        + \sum_{\gamma_1 \in \Gamma_1} \, \intl_{[V_2]} \varphi(v \iota(\gamma_1) g) \psi^{-1}_{\alpha_1}(v) \, dv \, ,
    \end{multline*}
    where we note that the extra second term comes from the constant term on $V_1$. We will, after the iteration end up with
    \begin{equation}
        \varphi(g) = \intl_{[N]} \varphi(ng) \, dn + \sum_{i=1}^{n-1} \sum_{\gamma \in \Gamma_{i}} \, \intl_{[N]} \varphi(n \iota(\gamma) g) \psi^{-1}_{\alpha_i}(n) \, dn \, .
    \end{equation}
    This completes the proof for the minimal representation.

    \item For case \ref{itm:varphi-ntm}, where $\varphi$ is in the next-to-minimal-representation, we start again from \eqref{eq:ThmA-first-row} and expand using corollary~\ref{cor:ntm-row}.
    We get, for the first step, that
    \begin{multline}
        \varphi(g) = \Big( \intl_{[V_2]} \varphi(v g) \, dv + \sum_{\gamma_2\in\Gamma_2}\intl_{[V_2]} \varphi(v \iota(\gamma_2) g) \psi^{-1}_{\alpha_2}(vg) \, dv \Big) +{} \\[-1em]
        +\sum_{\gamma_1 \in \Gamma_1} \intl_{[V_2]} \varphi(v \iota(\gamma_1) g) \psi^{-1}_{\alpha_1}(v) \, dv \, ,
    \end{multline}
    where the parenthesis comes from the expansion of the constant term in \eqref{eq:ThmA-first-row}. Expanding in the next row as well, this becomes
    \begin{multline}
            \Big(\! \intl_{[V_3]} \!\!\! \varphi(vg) \, dv \, + \hspace{-0.4em} \sum_{\gamma_3\in\Gamma_3} \, \intl_{[V_3]} \!\!\! \varphi(v \iota(\gamma_3) g) \psi^{-1}_{\alpha_3}(v) \, dv \, + \hspace{-0.4em} \sum_{\gamma_2\in\Gamma_2} \, \intl_{[V_3]} \!\!\! \varphi(v \iota(\gamma_2) g) \psi^{-1}_{\alpha_2}(v) \, dv \Big) \,+{} \\
            + \hspace{-0.4em}\sum_{\gamma_1 \in \Gamma_1} \!\! \Big( \! \intl_{[V_3]} \!\!\! \varphi(v \iota(\gamma_1) g) \psi^{-1}_{\alpha_1}(v) \, dv + \hspace{-1.4em} \sum_{\gamma_3 \in \Gamma_3(\psi_{\alpha_1}\!)} \, \intl_{[V_3]} \!\!\! \varphi(v \iota(\gamma_3) \iota(\gamma_1) g) \psi^{-1}_{\alpha_1, \alpha_3}(v) \, dv \Big)\,.
    \end{multline}

    For each expansion adding a row $i$, the constant term gives an extra sum over $\Gamma_{i}$ of a Fourier integral with character $\psi_{\alpha_i}$, and from all terms with characters $\psi_{\alpha_j}$ with $j < i - 1$ we get an extra sum over $\Gamma_{i}(\psi_{\alpha_j})$ together with a character $\psi_{\alpha_j, \alpha_{i}}$. 
    Corollary~\ref{cor:ntm-row} \ref{itm:ntm-double} implies that these terms with characters non-trivial along two simple roots do not receive any further contributions. 
    Thus, after repeatedly using corollary~\ref{cor:ntm-row} to the last row, we get that
    \begin{multline}
        \label{eq:ThmA-ntm}
        \varphi(g) = \intl_{[N]} \varphi(ng) \, dn + \sum_{i=1}^{n-1} \sum_{\gamma \in \Gamma_i} \, \intl_{[N]} \varphi(n \iota(\gamma) g) \psi^{-1}_{\alpha_i}(n) \, dn +{} \\
        + \sum_{j=1}^{n-3} \sum_{i=j+2}^{n-1} \sum_{\substack{\gamma_i \in \Gamma_i(\psi_{\alpha_j}\!) \\ \gamma_j \in \Gamma_j}} \, \intl_{[N]} \varphi(n \iota(\gamma_i) \iota(\gamma_j) g) \psi^{-1}_{\alpha_j, \alpha_i} (n) \, dn \, ,
    \end{multline}
    which completes the proof of Theorem \ref{thm:varphi}.
    \end{itemize-small}
\end{proof}

\section{Proof of theorem \ref{thm:max-parabolic}}
\label{sec:thmB}
In this section, we prove Theorem \ref{thm:max-parabolic} which relates Fourier coefficients on a maximal parabolic subgroup with Whittaker coefficients on the Borel subgroup.
Recalling that the constant terms are known from \cite{MW95}, we only focus on non-trivial characters, but first we need to introduce some notation and lemmas.

For $1 \leq m \leq n-1$, let $U_m$ be the unipotent radical of the maximal parabolic subgroup $P_m$ with Levi subgroup $L_m$ isomorphic to the subgroup of $\GL_m \times \GL_{n-m}$ defined by $\{(g,g')\in \GL_m \times \GL_{n-m}: \det(g) \det(g')=1\}$. $U_m$ is abelian and is isomorphic to the set of all $m \times (n-m)$ matrices. Write $U_m$ as 
\begin{equation}
    U_m = \left\{ \begin{pmatrix}
        I_m & X\\
        0 & I_{n-m}
    \end{pmatrix} : X \in \Mat_{m \times (n-m)}\right\} \, .
\end{equation} 
Let $\ol{U}_m = {}^t U_m$ be the unipotent radical of the opposite parabolic $\ol{P}_m$. Then the Lie algebra of $\ol{U}_m$ can be written as
\begin{equation}
    \label{eq:um-param}
    \ol{\mathfrak{u}}_m = {}^t \lie u_m = 
    \left\{y(Y) = \begin{pmatrix}
        0_m & 0\\
        Y & 0_{n-m}
    \end{pmatrix} : Y \in \Mat_{(n-m) \times m}\right\}.
\end{equation}
It is clear that the character group of $U_m$ can be identified with ${}^t \lie u_m$. 
$L_m$ acts on ${}^t \lie u_m$ via conjugation and with \eqref{eq:character-conjugation} this becomes a conjugation of the corresponding character's argument. 
Because of \eqref{eq:Fourier-L-conjugation}, the Fourier coefficients for characters in the same $L_m(F)$-orbit are related by translates of their arguments, which means that we only need to compute one Fourier coefficient for each orbit.
We will therefore now describe the $L_m(F)$-orbits of elements $y(Y) \in {}^t \lie u_m$ but leave the details to be proven in appendix~\ref{app:levi-orbits}.

Starting first with $\ol F$ the number of $L_m(\ol F)$-orbits is $\min(m,n-m)+1$ and the orbits are classified by the rank of the $(n-m) \times m$ matrix $Y$. 
A representative of an $L_m(\ol F)$-orbit corresponding to rank $r$ can be chosen as $y(Y_r)$ where $Y_r$ is an $(n-m) \times m$ matrix, zero everywhere except for the upper right $r \times r$ submatrix which is anti-diagonal with all anti-diagonal elements equal to one.
For each rank $r$, $0 \leq r \leq \min(m,n-m)$, the corresponding $G(\ol{F})$-orbit is parametrized by the partition $[2^r 1^{n-2r}]$. 

As shown in appendix~\ref{app:levi-orbits}, the $L_m(F)$-orbits are characterized by the same data as the $G(F)$-orbits with $([2^r 1^{n-2r}], d)$, $0 \leq r \leq \min(m,n-m)$, $d \in F^\times/(F^\times)^k$ and $k \in \gcd([2^r 1^{n-2r}])$ with representatives $y(Y_r(d))$ where $Y_r(d)$ is of the same form as $Y_r$ above, but with the lower left element in the $r \times r$ matrix equal to $d$
\begin{equation}
    Y_r(d) = 
    \begin{pmatrix}
        \quad 0 &
        \begin{bsmallmatrix}
            & & & 1 \\
            & & \reflectbox{$\ddots$} & \\
            & 1 & & \\
            d & & & 
        \end{bsmallmatrix} \\[1.5em]
        \quad 0 & 0
    \end{pmatrix} \in \Mat_{(n-m) \times m}(F)\, .
\end{equation}
We will continue to write $Y_r(1) = Y_r$. 
Note that for $0 \leq r \leq 2$ and $n \geq 5$, $k$ is equal to $1$.
Each such $L_m(F)$-orbit is also part of the $G(F)$-orbit of the same data. 

From \eqref{eq:character} the corresponding character on $U_m$ is
\begin{equation}
    \psi_{y(Y_r)}(u)=\psi(\tr (y(Y_r) \log (u))), \quad u \in U_m(\ads).
\end{equation}
Let $s_m$ be the semisimple element $\trdiag(1,1,\ldots, 1, -1,-1, \ldots, -1)$ with $m$ copies of $1$'s and $(n-m)$ copies of $-1$'s. Then, for any automorphic form $\varphi$ on $\SL_n(\BA)$, the following Fourier coefficient
\begin{equation}
    \label{eq:Yr-coefficient}
    \int_{[U_m]} \varphi(ug) \psi_{y(Y_r(d))}^{-1}(u) \,du 
\end{equation}
is exactly the degenerate Fourier coefficient $\CF_{s_m,y(Y_r(d))}(\varphi)$. 

Note that in this paper, we focus on minimal and next-to-minimal representations, hence we only need to consider the cases of $0 \leq r \leq 2$. Indeed, for $3 \leq r \leq \min(m,n-m)$, by definition, the generalized Fourier coefficient attached to the partition $[2^r1^{n-2r}]$ is identically zero for minimal and next-to-minimal representations. By Theorem \ref{thm:ggsglobal} and since $y(Y_r(d))$ is in the $G(\ol{F})$-orbit $[2^r1^{n-2r}]$, all the Fourier coefficients $\CF_{s_m,y(Y_r)}(\varphi)$ are also identically zero.

This leaves $r \in \{1, 2\}$ and with our assumption that $n \geq 5$, we thus only need to consider the representatives $y(Y_1)$ and $y(Y_2)$ with $d = 1$ since $\gcd([2^r1^{n-2r}])=1$. 

The above arguments proves the first part of theorem~\ref{thm:max-parabolic}, that there exists an element $l \in L_m(F)$ such that $\mathcal{F}_U(\varphi, \psi_U; g) = \mathcal{F}_U(\varphi, \psi_{y(Y_r)}; lg)$ (note the slight difference in notation $\psi_{y(Y_r)}$ instead of $\psi_{Y_r}$), and that all $\mathcal{F}_U(\varphi, \psi_{y(Y_r)}; lg)$ for $r > r_\pi$ vanish identically where $r_{\pi_\text{min}} = 1$ and $r_{\pi_\text{ntm}} = 2$.

We will now determine the remaining Fourier coefficients $\mathcal{F}_U(\varphi, \psi_{y(Y_r)}; g)$ in terms of Whittaker coefficients.
For $1 \leq m \leq n-1$, $0 \leq i \leq m-1$, let $U_m^i$ be the unipotent radical of the parabolic of type $(m-i,1^i,n-m)$. Note that $U_m^0=U_m$. 
Note that the character $\psi_{y(Y_1)}$ can be extended to a character of any
subgroup of $N$ containing $U_m$, still denoted by $\psi_{y(Y_1)}$. 
Let $C_{m-i}$ be the subgroup of $U_m^{i+1}$ consisting of elements with $u_{p,q}=0$ except when $q = m-i$ and the diagonal elements. Note that $C_{m-i}$ is an abelian subgroup and its character group can be identified with ${}^t \mathfrak{c}_{m-i}$, the Lie algebra of ${}^t C_{m-i}$. 
Write $C_{m-i}$ as 
\begin{equation}
    C_{m-i} = \left\{ c(X) = 
    \begin{psmallmatrix}
        I_{m-i-1} & X & 0\\
        0 & 1 & 0\\
        0 & 0 & I_{n-m+i}
    \end{psmallmatrix}\right\}
\end{equation}
and ${}^t \mathfrak{c}_{m-i}$ as
\begin{equation}
    {}^t \mathfrak{c}_{m-i} = \left\{ y(Y)=
    \begin{psmallmatrix}
        0_{m-i-1} & 0 & 0\\
        Y & 0 & 0\\
        0 & 0 & 0_{n-m+i}
    \end{psmallmatrix}\right\} \, .
\end{equation} 
For each $y \in {}^t \mathfrak{c}_{m-i}$, the corresponding character $\psi_{y}$ of $C_{m-i}$ is defined by $\psi_{y}(c)=\psi(\tr(y \log(c))$. 
For any $g \in \GL_{m-i-1}$, let 
\begin{equation}
    \iota(g)=
    \begin{psmallmatrix}
        g & 0 & 0\\
        0 & I_{n-m+i} & 0\\
        0 & 0 & \det(g)^{-1}
    \end{psmallmatrix} \in \SL_n \, .
\end{equation}

\begin{examp}
    For $\SL_5$ we have that
    \begin{equation}
        U_3 =
        \left\{ \begin{psmallmatrix}
            1 &   &   & * & * \\
              & 1 &   & * & * \\
              &   & 1 & * & * \\
              &   &   & 1 &   \\
              &   &   &   & 1
        \end{psmallmatrix} \right\} \qquad
        U_3^1 =
        \left\{ \begin{psmallmatrix}
            1 &   & * & * & * \\
              & 1 & * & * & * \\
              &   & 1 & * & * \\
              &   &   & 1 &   \\
              &   &   &   & 1
        \end{psmallmatrix} \right\} \qquad
        C_3 =
        \left\{ \begin{psmallmatrix}
            1 &   & * &   &   \\
              & 1 & * &   &   \\
              &   & 1 &   &   \\
              &   &   & 1 &   \\
              &   &   &   & 1
        \end{psmallmatrix} \right\} \, .
    \end{equation}
\end{examp}

Note that $U_m^{m-1} = V_m$ and $U_m^{i+1} = C_{m-i} U_m^i$. We will sometimes use $j = m-i$ instead to denote column as follows $U_m^{m-j+1} = C_j U_m^{m-j}$.

We will now construct a semi-simple element $s = s_{U_m^i}$ for which $\lie g^s_1 = \emptyset$ and such that $\lie n_s = \lie g^s_{\geq 2}$ corresponds to $N_s = U_m^i$. These conditions are satisfied by
\begin{equation}
    \label{eq:s-Umi}
    \trdiag(2i, \ldots, 2i, 2(i-1) \ldots, 2, 0, -2, \ldots, -2)
\end{equation}
with $m-i$ copies of $2i$ and $n-m$ copies of $-2$.

Note that any character $\psi$ on $N$ trivial on the complement of $U_m^i$ in $N$ is also a character on $U_m^i$ by restriction and can be expressed as $\psi_y$ with $y \in \lie g^s_{-2}$ where $s = s_{U_m^i}$ such that $(s,y)$ forms a Whittaker pair. 
Indeed, we have that $y \in \lie g^{s_N}_{-2}$ where $s_N = \trdiag(2(n-1), 2(n-2), \ldots, 0,-2)$ from \eqref{eq:s-Vi} and the complement of $U_m^i$ is described by $s-s_N$ meaning that $[y, s-s_N] = 0$ for $\psi$ to be trivial on the complement and thus $[y, s] = [y, s_N] = -2y$. 

\begin{lem}
    \label{lem:col-conjugation}
    Let $\varphi$ be an automorphic form on $\SL_n(\ads)$ and $2 \leq j \leq n$.
    Let also $\psi_0$ be a character on $N$ which, if $j = 2$, should be trivial along $\alpha_1$ and (at least) two adjacent other simple roots.
    Then, 
    \begin{equation}
        \sum_{\substack{y \in {}^t \lie c_j(F) \\ y \neq 0}} \intl_{[C_j]} \varphi(xg) \psi_y^{-1}(x) \, dx = \sum_{\gamma \in \Lambda_{j-1}(\psi_0)} \intl_{[C_j]} \varphi(x \hat\iota(\gamma) g) \psi^{-1}_{\alpha_{j-1}}(x) \, dx \, .
    \end{equation}
    where $\Lambda_j(\psi_0)$ is defined in \eqref{eq:Lambda-j} and only depends on $\psi_0$ for $j = 2$.
\end{lem}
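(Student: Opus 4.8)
The plan is to run the proof of Lemma~\ref{lem:gamma} for $j>2$ and that of Lemma~\ref{lem:gamma-last-row} for $j=2$, with the abelian column subgroup $C_j$ in place of the row subgroups $R_i$. For $j>2$ I would first parametrize $y\in{}^t\lie c_j(F)$ by row vectors $Y\in\Mat_{1\times(j-1)}(F)$ via $y=y(Y)$, so the left-hand sum becomes a sum over $Y\neq 0$; since $\SL_{j-1}(F)$ acts transitively on $\Mat_{1\times(j-1)}(F)^\times$ by right multiplication $Y\mapsto Y\gamma$ with stabilizer $(\SL_{j-1}(F))_{\hat X}$ of $\hat X=(0,\dots,0,1)$, this gives $\Mat_{1\times(j-1)}(F)^\times\iso(\SL_{j-1}(F))_{\hat X}\bs\SL_{j-1}(F)=\Lambda_{j-1}$ and hence $\sum_{y\neq0}\int_{[C_j]}\varphi(xg)\psi_y^{-1}(x)\,dx=\sum_{\gamma\in\Lambda_{j-1}}\int_{[C_j]}\varphi(xg)\psi_{y(\hat X\gamma)}^{-1}(x)\,dx$. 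Next, writing $l=\hat\iota(\gamma)$ for the embedding of \eqref{eq:iota} placing $\gamma$ in the top-left $(j-1)\times(j-1)$ block, a short block computation yields $y(\hat X\gamma)=l^{-1}y(\hat X)l$, so \eqref{eq:character-conjugation} gives $\psi_{y(\hat X\gamma)}(x)=\psi_{y(\hat X)}(lxl^{-1})$; since $l$ lies in the Levi of the parabolic of type $(j-1,n-j+1)$ it normalizes $C_j$ (acting on it by $X\mapsto\gamma X$) and preserves the Haar measure, so the substitution $lxl^{-1}\to x$ together with the left $\SL_n(F)$-invariance of $\varphi$ turns the sum into $\sum_{\gamma\in\Lambda_{j-1}}\int_{[C_j]}\varphi(x\hat\iota(\gamma)g)\psi_{y(\hat X)}^{-1}(x)\,dx$. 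It remains to check $\psi_{y(\hat X)}|_{C_j}=\psi_{\alpha_{j-1}}$, which is the claimed identity.

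For $j=2$ the group $C_2$ is one-dimensional and ${}^t\lie c_2(F)\cong F$, so the sum runs over $F^\times$ and $\SL_{j-1}=\SL_1$ is trivial; the diagonal torus fixing $\psi_0$ now takes over its role, as in Lemma~\ref{lem:gamma-last-row}. Writing a character of $N$ as $\psi_0=\prod_k\psi_{\alpha_k}^{c_k}$, the hypothesis gives $c_1=0$ together with $c_{k_0-1}=c_{k_0}=0$ for some $3\leq k_0\leq n-1$, and I would then check that, for $m\in F^\times$, the diagonal element with entry $m$ at position $1$, entry $m^{-1}$ at position $k_0$, and $1$ elsewhere lies in $T_{\psi_0}$ and sends $y(1)$ to $y(m)$ under conjugation; hence $h\mapsto h^{-1}y(1)h$ maps $T_{\psi_0}$ onto ${}^t\lie c_2(F)^\times$, with stabilizer $T_{\psi_0}\cap T_{\psi_{\alpha_1}}$ of $y(1)$, so the index set is identified with $(T_{\psi_0}\cap T_{\psi_{\alpha_1}})\bs T_{\psi_0}$. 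The same substitution $hxh^{-1}\to x$, the left-invariance of $\varphi$, and the identity $\psi_{y(1)}|_{C_2}=\psi_{\alpha_1}|_{C_2}$ then finish this case.

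I expect the only real difficulty to be the $j=2$ step: one must verify that $T_{\psi_0}$ --- not merely the full diagonal torus --- acts transitively on the nonzero characters of $C_2$, and this is exactly where the hypothesis on $\psi_0$ enters (the chosen diagonal element must both realize an arbitrary scaling of the relevant entry and fix $\psi_0$, which forces triviality of $\psi_0$ along $\alpha_1$ and along two adjacent other simple roots). The remaining ingredients --- the block-matrix conjugation identities, the fact that $\hat\iota(\gamma)$ normalizes $C_j$, and the computation $\psi_{y(\hat X)}|_{C_j}=\psi_{\alpha_{j-1}}$ --- are routine and are directly modelled on the already-established Lemma~\ref{lem:gamma}.
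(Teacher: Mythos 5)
Your proposal is correct and follows essentially the same route as the paper: for $j>2$ the parametrization of ${}^t\lie c_j(F)^\times$ as $(\SL_{j-1}(F))_{\hat X}\bs\SL_{j-1}(F)$ acting by $Y\mapsto Y\gamma$, the identity $y(\hat X\gamma)=\hat\iota(\gamma)^{-1}y(\hat X)\hat\iota(\gamma)$ combined with \eqref{eq:character-conjugation} and the variable change, and for $j=2$ the surjection $T_{\psi_0}\to{}^t\lie c_2(F)^\times$, $h\mapsto h^{-1}y(1)h$, with stabilizer $T_{\psi_0}\cap T_{\psi_{\alpha_1}}$, exactly as in the paper's appeal to the arguments of lemmas~\ref{lem:gamma} and~\ref{lem:gamma-last-row}. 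Your explicit torus element $h=\diag(m,1,\dots,1,m^{-1},1,\dots,1)$ (with $m^{-1}$ at position $k_0$) is a correct spelled-out version of the step the paper only sketches, and it uses the character hypothesis in precisely the intended way.
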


\begin{proof}
    The proof is similar to those of lemmas~\ref{lem:gamma} and \ref{lem:gamma-last-row}.
    \begin{itemize-small}
    \item For $2 < j \leq n$, we parametrize $y \in {}^t \lie c_j(F)$ by row vectors $Y \in \Mat_{1\times(j-1)}(F)$ with representative $\hat X = (0, \ldots, 0, 1)$ such that $\psi_{y(\hat X)} = \psi_{\alpha_{j-1}}$. 
        
    The surjective map $\SL_{j-1}(F) \to {}^t \lie c_j(F)^\times : \gamma \mapsto \hat X \gamma$ gives that ${}^t \lie c_j(F)^\times \iso (\SL_{j-1}(F))_{\hat X} \bs \SL_{j-1}(F) = \Lambda_{j-1}$. 

    As in lemma~\ref{lem:gamma}, we can write the action as a conjugation $y(\hat X \gamma) = \hat\iota(\gamma)^{-1} y(\hat X) \hat\iota(\gamma)$ and, using \eqref{eq:character-conjugation}, $\psi_{y(\hat X \gamma)}(x) = \psi_{y(\hat X)}(\hat\iota(\gamma) x \hat\iota(\gamma)^{-1})$. Since $\hat \iota(\gamma)$ normalizes $C_j$ a variable change gives the wanted expression.

    \item For $j = 2$, with $y \in {}^t \lie c_2(F) \iso F$ we instead consider the map $T_{\psi_0} \to {}^t \lie c_2(F)^\times : h \mapsto h^{-1} y(1) h$ which is surjective by similar arguments as in lemma~\ref{lem:gamma-last-row} and thus gives ${}^t \lie c_2(F)^\times \iso (T_{\psi_0} \cap T_{\psi_{\alpha_1}}) \bs T_{\psi_0} = \Lambda_1(\psi_0)$. Writing the conjugation of $y$ as a conjugation of the character's argument and then substituing variables in the Fourier integral thus proves the lemma.
    \end{itemize-small}
\end{proof}

\begin{lem}
    \label{lem:col-expansion}
Let $\varphi$ be an automorphic form on $\SL_n(\ads)$, $1 \leq m \leq n-1$, $2 \leq j \leq m$ and $\psi_0$ a character on $N$ trivial on the complement of $U_m^{m-j}$ in $N$. 
For $j = 2$, $\psi_0$ should also be trivial along (at least) two adjacent simple roots other than $\alpha_1$.
\begin{multline}
    \intl_{[U_m^{m-j}]} \hspace{-0.6em} \varphi(ug) \psi_0^{-1}(u) \, du = \\[-0.5em]
    = \hspace{-1em} \intl_{[U_m^{m-j+1}]} \hspace{-1em} \varphi(ug) \psi_0^{-1}(u) \, du + \hspace{-1.4em} \sum_{\gamma \in \Lambda_{j-1}(\psi_0)} \, \intl_{[U_m^{m-j+1}]} \hspace{-1em} \varphi(u \hat\iota(\gamma) g) \psi_0^{-1}(u) \psi_{\alpha_{j-1}}^{-1}(u) \, du\,.
\end{multline}
\end{lem}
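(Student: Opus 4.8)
The plan is to follow the template of Lemma~\ref{lem:Vi-to-Vi+1}, of which this is the ``column'' counterpart. Set $i = m-j$, so that the groups in play are $U_m^{m-j} = U_m^i$ and $U_m^{m-j+1} = U_m^{i+1} = C_j\, U_m^i$ with $C_j = C_{m-i}$ the abelian $j$-th column subgroup introduced above. Two elementary facts about the matrix realizations will be used repeatedly: first, $U_m^{m-j+1} = C_j\, U_m^{m-j}$ with $C_j$ abelian, so integration over $[U_m^{m-j+1}]$ factors as iterated integration over $[C_j]$ and $[U_m^{m-j}]$; second, $\psi_0$ being trivial on the complement of $U_m^{m-j}$ in $N$ means its restriction to $C_j$ is trivial (since $C_j$ sits inside the $\GL_j$-block, whose roots lie in that complement), while $\psi_0$ on $U_m^{m-j}$ is nontrivial only along $\alpha_j,\dots,\alpha_m$; also $\psi_{\alpha_{j-1}}$, regarded on $U_m^{m-j+1}$, depends only on the $C_j$-part.

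First I would Fourier-expand along the abelian group $C_j$. For $x\in C_j(F)$ and $u\in U_m^{m-j}(\ads)$ one has $\varphi(xug)=\varphi(ug)$ by left $\SL_n(F)$-invariance, and $[C_j]$ is compact with character group ${}^t\lie c_j(F)$, so
\begin{equation*}
\varphi(ug)=\sum_{y\in{}^t\lie c_j(F)}\intl_{[C_j]}\varphi(xug)\,\psi_y^{-1}(x)\,dx\,.
\end{equation*}
Separating the $y=0$ term and applying Lemma~\ref{lem:col-conjugation} to $\varphi$ with base point $ug$ (for $j=2$ the standing hypothesis on $\psi_0$ is exactly what that lemma requires) gives
\begin{equation*}
\varphi(ug)=\intl_{[C_j]}\varphi(xug)\,dx+\sum_{\gamma\in\Lambda_{j-1}(\psi_0)}\intl_{[C_j]}\varphi(x\hat\iota(\gamma)ug)\,\psi_{\alpha_{j-1}}^{-1}(x)\,dx\,.
\end{equation*}

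Next I would integrate this over $u\in[U_m^{m-j}]$ against $\psi_0^{-1}(u)$. In each $\gamma$-summand, move $\hat\iota(\gamma)$ to the right past $u$: with $u'=\hat\iota(\gamma)u\hat\iota(\gamma)^{-1}$ one has $x\hat\iota(\gamma)ug=xu'\hat\iota(\gamma)g$. For $j>2$, $\hat\iota(\gamma)$ is block-diagonal with its only nontrivial block inside the $\GL_j$-block, hence lies in the Levi of the parabolic with unipotent radical $U_m^{m-j}$, so it normalizes $U_m^{m-j}$ and preserves the measure (global modulus $1$ by the product formula); moreover it acts only on the coordinates $1,\dots,j-1$, which are disjoint from the simple roots $\alpha_j,\dots,\alpha_m$ carrying $\psi_0$ on $U_m^{m-j}$, so $\psi_0(u')=\psi_0(u)$. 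For $j=2$, $\hat\iota(\gamma)=\gamma\in T_{\psi_0}$ and $\psi_0(u')=\psi_0(u)$ holds by the definition of $T_{\psi_0}$. After the substitution $u'\mapsto u$ and relabelling, each $\gamma$-summand reads $\int_{[U_m^{m-j}]}\int_{[C_j]}\varphi(xu\hat\iota(\gamma)g)\,\psi_{\alpha_{j-1}}^{-1}(x)\psi_0^{-1}(u)\,dx\,du$. Finally, recombine the iterated integrals over $[U_m^{m-j}]\times[C_j]$ into single integrals over $[U_m^{m-j+1}]$ in the variable $w=xu$; using $\psi_0(w)=\psi_0(u)$ (as $\psi_0|_{C_j}=1$) and $\psi_{\alpha_{j-1}}(w)=\psi_{\alpha_{j-1}}(x)$, together with the same recombination for the $y=0$ term, produces exactly the two terms in the statement.

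This is essentially bookkeeping; as in Lemmas~\ref{lem:gamma-last-row} and~\ref{lem:Vi-to-Vi+1}, the only steps needing genuine care are the conjugation-invariance of $\psi_0$ under $\hat\iota(\gamma)$ and, in the boundary case $j=2$, the identification of $\Lambda_1(\psi_0)$ with $(T_{\psi_0}\cap T_{\psi_{\alpha_1}})\backslash T_{\psi_0}$ --- which is already packaged inside Lemma~\ref{lem:col-conjugation}, the extra triviality hypothesis on $\psi_0$ feeding exactly that identification. I do not anticipate any obstacle beyond this routine verification.
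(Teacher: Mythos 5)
Your proposal is correct and follows essentially the same route as the paper's proof: Fourier expansion of $\varphi(ug)$ along the abelian column group $C_j$, an application of lemma~\ref{lem:col-conjugation} to collapse the nonzero modes onto $\psi_{\alpha_{j-1}}$ with the sum over $\Lambda_{j-1}(\psi_0)$, invariance of $\psi_0$ under conjugation by $\hat\iota(\gamma)$ (split into $j>2$ and $j=2$ as in the paper), and recombination of $[C_j]\times[U_m^{m-j}]$ into $[U_m^{m-j+1}]$ using $\psi_0(xu)=\psi_0(u)$ and $\psi_{\alpha_{j-1}}(xu)=\psi_{\alpha_{j-1}}(x)$. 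The only cosmetic difference is that the paper verifies these character identities by an explicit block-matrix parametrization, whereas you argue them via the support of $\psi_0$ on the simple roots $\alpha_j,\dots,\alpha_m$; the content is the same.
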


\begin{proof}
    For $2 \leq j \leq m$ we have that $\varphi(xug) = \varphi(ug)$ for $x \in C_i(F)$ and $u \in U_m^i(\ads)$ and since $C_j$ is abelian
    \begin{equation}
        \label{eq:integrand-col-exp}
        \varphi(ug) = \sum_{y \in {}^t \lie c_j(F)} \intl_{[C_j]} \varphi(xug) \psi_y^{-1}(x) \, dx \, .
    \end{equation}
    Using lemma~\ref{lem:col-conjugation}, we get that
    \begin{equation}
        \varphi(ug) = \intl_{[C_j]} \varphi(xug) \, dx + \sum_{\gamma \in \Lambda_{j-1}(\psi_0)} \intl_{[C_j]} \varphi(x \hat \iota(\gamma) ug) \psi_{\alpha_{j-1}}^{-1}(x) \, dx \, .
    \end{equation}
   
    Let $u \in U_m^{m-j}$ be parametrized as
    \begin{equation}
        u =
        \begin{pmatrix}
            I_{j-1} & B \\
            0 & A
        \end{pmatrix}
    \end{equation}
    where $A \in \Mat_{(n-j+1)\times(n-j+1)}$ is upper unitriangular (with several upper triangular elements being zero) and $B \in \Mat_{(j-1)\times(n-j+1)}$ with elements in the first column being zero. 
    Since $B$ does not intersect the abelianization $[N,N]\bs N$ (that is, the Lie algebra of $B$ does not contain any generator of a simple root), we have, by assumption, that $\psi_0$ only depends on $A$.
    We also have that $x \in C_j$ can be parametrized as
    \begin{equation}
        x =
        \begin{pmatrix}
            I_{j-1} & B' \\
            0 & I_{n-j+1}
        \end{pmatrix}
    \end{equation}
    where $B' \in \Mat_{(j-1)\times(n-j+1)}$ with only the first column non-zero. Thus,
    \begin{equation}
        xu =
        \begin{pmatrix}
            I_{j-1} & B + B' A \\
            0 & A
        \end{pmatrix}
    \end{equation}
    which means that $\psi_0(u) = \psi_0(xu)$.
    The first column of $B$ is zero and $A$ is upper unitriangular which means that the first column of $B+B'A$ is the same as the first column of $B'$ and since $\psi_{\alpha_{j-1}}$ only depends on the first column of $B'$ this implies that $\psi_{\alpha_{j-1}}(x) = \psi_{\alpha_{j-1}}(xu)$.
    
    \begin{itemize-small}
    \item For  $3 \leq j \leq m$ with $\gamma \in \Lambda_{j-1}$ and $l = \hat\iota(\gamma)$,
    \begin{equation}
        l u l^{-1} = 
        \begin{pmatrix}
            \gamma & 0 \\
            0 & I_{n-j+1}
        \end{pmatrix}
        \begin{pmatrix}
            I_{j-1} & B \\
            0 & A
        \end{pmatrix}
        \begin{pmatrix}
            \gamma^{-1} & 0 \\
            0 & I_{n-j+1}
        \end{pmatrix} = 
        \begin{pmatrix}
            I_{j-1} & \gamma B \\
            0 & A
        \end{pmatrix}
    \end{equation}
    and since $\psi_0$, by assumption, only depends on $A$ we have that $\psi_0(u) = \psi_0(l u l^{-1})$.
    \item For $j = 2$ with $\gamma \in \Lambda_1$ and $l = \hat\iota(\gamma) = \gamma$ is in the stabilizer $T_{\psi_0}$ which, by definition, means that $\psi_0(u) = \psi_0(lul^{-1})$.
    \end{itemize-small}
    
    Hence, for $2 \leq j \leq m$, and after making a variable change $lul^{-1} \to u$, we get that
    \begin{equation}
        \begin{split}
            \MoveEqLeft
            \intl_{[U_m^{m-j}]} \intl_{[C_j]} \varphi(x l ug) \psi_0^{-1}(u) \psi_{\alpha_{j-1}}^{-1}(x)\, dx \, du =  \\ 
            &= \intl_{[U_m^{m-j}]} \intl_{[C_j]} \varphi(xulg) \psi_0^{-1}(u) \psi_{\alpha_{j-1}}^{-1}(x)\, dx \, du \\
            &= \intl_{[U_m^{m-j}]} \intl_{[C_j]} \varphi(xulg) \psi_0^{-1}(xu) \psi_{\alpha_{j-1}}^{-1}(xu)\, dx \, du \\
            &= \intl_{[U_m^{m-j+1}]} \varphi(ulg) \psi_0^{-1}(u) \psi_{\alpha_{j-1}}^{-1}(u)\, du \, .
    \end{split}
    \end{equation} 
    After similar manipulations for the constant term we obtain
    \begin{multline}
        \intl_{[U_m^{m-j}]} \hspace{-0.6em} \varphi(ug) \psi_0^{-1}(u) \, du = \\[-0.5em]
        = \hspace{-1em} \intl_{[U_m^{m-j+1}]} \hspace{-1em} \varphi(ug) \psi_0^{-1}(u) \, du + \hspace{-1.4em} \sum_{\gamma \in \Lambda_{j-1}(\psi_0)} \, \intl_{[U_m^{m-j+1}]} \hspace{-1em} \varphi(ulg) \psi_0^{-1}(u) \psi_{\alpha_{j-1}}^{-1}(u)\, du \, .
    \end{multline}
\end{proof}

\begin{rmk}
    \label{rem:col-expansion}
    We note that if $\psi_0$ is trivial along $\alpha_1$ but not along at least two adjacent other simple roots we cannot use lemma~\ref{lem:col-conjugation}, but we could still make an expansion over $C_2$ and keep the sum over $y \in {}^t \lie c_2(F) \iso F$ in the proof above.
    Since the character $\psi_y$ has the same support as $\psi_{\alpha_1}$ on $N$ we still have that $\psi_y(x) = \psi_y(xu)$ for $x \in C_j(\ads)$ and $u \in U_m^{m-j}(\ads)$ and since $\psi_0$ is still a character on $N$ trivial on the complement of $U_m^{m-j}$ it is still true that $\psi_0(u) = \psi_0(xu)$. Thus, using~\eqref{eq:integrand-col-exp}
    \begin{equation}
        \begin{split}
            \intl_{[U_m^{m-2}]} \hspace{-0.6em} \varphi(ug) \psi_0^{-1}(u) \, du 
            &= \sum_{y \in {}^t \lie c_2(F)} \intl_{[U_m^{m-2}]} \intl_{[C_2]} \varphi(xug) \psi_0^{-1}(u) \psi_y^{-1}(x) \,dx \, du \\
            &= \sum_{y \in {}^t \lie c_2(F)} \intl_{[U_m^{m-2}]} \intl_{[C_2]} \varphi(xug) \psi_0^{-1}(xu) \psi_y^{-1}(xu) \,dx \, du \\
            &= \sum_{y \in {}^t \lie c_2(F)} \intl_{[V_m]} \intl_{[C_2]} \varphi(vg) \psi_0^{-1}(v) \psi_y^{-1}(v) \, dv \, .
        \end{split}
    \end{equation}
\end{rmk}

\begin{lem}\label{TheoremB:Lemma1}
Assume that $\pi$ is an irreducible minimal automorphic representation of $\SL_n(\BA)$, $\varphi \in \pi$. 
For $1 \leq m \leq n-1$, $0 \leq i \leq m-2$, and $g \in \SL_n(\BA)$, 
\begin{equation}
\int_{[U_m^i]} \varphi(ug) \psi^{-1}_{y(Y_1)}(u) \,du = \int_{[U_m^{i+1}]} \varphi(ug) \psi^{-1}_{y(Y_1)}(u) \,du\,.
\end{equation}
\end{lem}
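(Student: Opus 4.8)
The plan is to expand the integrand along the extra column $C_{m-i}$ that distinguishes $U_m^{i+1}$ from $U_m^i$, exactly as in Lemma~\ref{lem:col-expansion}, and then show that the ``correction'' term vanishes because it is a Fourier coefficient attached to an orbit strictly larger than the minimal one. First I would apply Lemma~\ref{lem:col-expansion} with $j = m-i$ and $\psi_0 = \psi_{y(Y_1)}$, viewed as a character on $N$ trivial on the complement of $U_m^i$; note that the support of $\psi_{y(Y_1)} = \psi_{\alpha_m}$ is the single simple root $\alpha_m$, and since $i \le m-2$ the relevant column index $j = m-i$ satisfies $j \ge 2$, and if $j = 2$ (i.e.\ $i = m-2$) then $\psi_0$ is non-trivial along at most the one simple root $\alpha_m \ne \alpha_1$, so the character condition of Lemma~\ref{lem:col-expansion} (via Remark~\ref{rem:character-condition}-type reasoning) is met. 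This yields
\begin{equation}
\intl_{[U_m^i]} \varphi(ug) \psi_{y(Y_1)}^{-1}(u) \, du = \intl_{[U_m^{i+1}]} \varphi(ug) \psi_{y(Y_1)}^{-1}(u) \, du + \sum_{\gamma \in \Lambda_{j-1}(\psi_{y(Y_1)})} \intl_{[U_m^{i+1}]} \varphi(u\hat\iota(\gamma) g) \psi_{y(Y_1)}^{-1}(u) \psi_{\alpha_{j-1}}^{-1}(u) \, du \, .
\end{equation}

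Next I would analyze the character $\psi_{y(Y_1)} \cdot \psi_{\alpha_{j-1}} = \psi_{\alpha_m} \cdot \psi_{\alpha_{m-i-1}}$ appearing in each correction term. Since $i \le m-2$, the two simple roots $\alpha_m$ and $\alpha_{m-i-1}$ are distinct, and I would identify this product character with $\psi_u$ for some $u \in \lie g^s_{-2}$, where $s = s_{U_m^{i+1}}$ is the semi-simple element constructed in \eqref{eq:s-Umi} making $(s,u)$ a Whittaker pair with $N_s = U_m^{i+1}$. The nilpotent element $u$ is a sum of two root vectors for distinct (but not necessarily orthogonal) simple roots; in every case its $\SL_n(\ol F)$-orbit is at least the next-to-minimal orbit $[2^2 1^{n-4}]$ — indeed, if $\alpha_m$ and $\alpha_{m-i-1}$ are non-adjacent the orbit is exactly $[2^2 1^{n-4}]$, and if they are adjacent (the case $i = m-2$, roots $\alpha_{m-1}, \alpha_m$) the orbit is $[3 1^{n-3}]$, which also strictly contains the minimal orbit. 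In either case the orbit is \emph{not} in the closure of the minimal orbit $[21^{n-2}]$.

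Then I would invoke Theorem~\ref{thm:ggsglobal}: since $\pi$ is a minimal representation, $[3 1^{n-3}], [2^21^{n-4}] \notin \mathcal{WF}(\pi)$, so for the Whittaker pair $(s, u)$ the associated Fourier coefficient $\mathcal{F}_{s,u}(\varphi)$ vanishes identically for $\varphi \in \pi$. Each correction term in the displayed formula above is exactly $\mathcal{F}_{s,u}(\varphi)(\hat\iota(\gamma)g)$ (up to the translation by $\hat\iota(\gamma) \in \SL_n(F)$, which by \eqref{eq:orbit-coefficient}-type reasoning does not affect vanishing), hence every correction term is zero. This leaves precisely the claimed identity $\int_{[U_m^i]} \varphi(ug)\psi_{y(Y_1)}^{-1}(u)\,du = \int_{[U_m^{i+1}]} \varphi(ug)\psi_{y(Y_1)}^{-1}(u)\,du$. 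The main obstacle I anticipate is the bookkeeping in the case $i = m-2$: there the column expansion is over $C_2$, so one must check the character condition of Lemma~\ref{lem:col-expansion} carefully (that $\psi_{\alpha_m}$ is trivial along two adjacent simple roots other than $\alpha_1$, which holds for $n \ge 5$), and separately verify that the resulting $u$ — now built from the adjacent pair $\alpha_{m-1}, \alpha_m$ — still lies outside the minimal orbit's closure, so that Theorem~\ref{thm:ggsglobal} still applies.
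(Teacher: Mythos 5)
Your proposal follows essentially the same route as the paper's proof: apply Lemma~\ref{lem:col-expansion} with $\psi_0=\psi_{y(Y_1)}=\psi_{\alpha_m}$, recognize each correction term as $\mathcal{F}_{s,u}(\varphi)(\hat\iota(\gamma)g)$ for the Whittaker pair with $s=s_{U_m^{i+1}}$ and $u$ supported on the roots $\alpha_{m-i-1}$ and $\alpha_m$, and kill it via Theorem~\ref{thm:ggsglobal} since $u$ lies outside the closure of the minimal orbit. Your extra bookkeeping (the $[2^21^{n-4}]$ versus $[31^{n-3}]$ case split for $i=m-2$, and the $j=2$ character-condition check) is consistent with, and if anything slightly more careful than, the argument given in the paper.
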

\begin{proof}
    Using lemma~\ref{lem:col-expansion} with $\psi_0 = \psi_{y(Y_1)} = \psi_{\alpha_m}$ we get that
    \begin{equation}
        \label{eq:col-min-rep}
        \intl_{[U_m^i]} \varphi(ug) \psi_0^{-1}(u) \, du = \intl_{[U_m^{i+1}]} \varphi(ug) \psi_0^{-1}(u) \, du + \sum_{\gamma \in \Lambda_{m-i-1}(\psi_0)} \hspace{-1em} \mathcal{F}(\varphi; m,i,\gamma,g) \, ,
    \end{equation}
    where we have introduced
    \begin{equation}
        \mathcal{F}(\varphi; m,i,\gamma,g) = \intl_{[U_m^{i+1}]} \hspace{-0.7em} \varphi(u \hat\iota(\gamma) g) \psi_0^{-1}(u) \psi_{\alpha_{m-i-1}}(u)\,du \, .
    \end{equation}

    Let $s = s_{U_m^{i+1}}$ from \eqref{eq:s-Umi}, and let $u \in \lie{sl}_n$ with two non-zero entries, both being $1$, at positions $(m-i, m-i-1)$ and $(m+1, m)$.
    Then, $\mathcal{F}(\varphi; m,i,\gamma,g) = \mathcal{F}_{s,u}(\varphi)(\hat\iota(\gamma)g)$ and since $u$ is not in the closure of the minimal orbit, theorem~\ref{thm:ggsglobal} gives that $\mathcal{F}_{s,u}(\varphi)$ is identically zero leaving only the constant mode in \eqref{eq:col-min-rep}.
\end{proof}

\pagebreak[3]

\noindent\textbf{Proof of Theorem \ref{thm:max-parabolic}.}

\begin{itemize-small}
\item{\bf Minimal representation.}
Assume that $\pi$ be an irreducible minimal automorphic representation of $\SL_n(\BA)$, and $\varphi \in \pi$. Applying Lemma \ref{TheoremB:Lemma1} repeatedly, 
we get that for each $1 \leq m \leq n-1$, 
$$\int_{[U_m]}\varphi(ug)\psi_{y(Y_1)}^{-1}(u)\,du
= \int_{[U_m^{m-1}]}\varphi(ug)\psi_{y(Y_1)}^{-1}(u)\,du\,.$$
Note that $U_m^{m-1}=V_m$ and $\psi_{y(Y_1)} = \psi_{\alpha_m}$. 
Applying corollary~\ref{cor:min-row}
repeatedly, 
we get that for each $1 \leq m \leq n-1$, 
$$\int_{[U_m^{m-1}]}\varphi(ug)\psi_{y(Y_1)}^{-1}(u)\,du=
\int_{[N]}\varphi(ng)\psi_{y(Y_1)}^{-1}(n)\,dn\,,$$
which is exactly
$$\int_{[N]}\varphi(ng)\psi_{\alpha_m}^{-1}(n)\,dn\,.$$

\item{\bf Next-to-minimal representation - rank 1.} 
    Let $\pi$ be an irreducible next-to-minimal automorphic representation of $\SL_n(\ads)$ and let $\varphi \in \pi$.
    Recalling that $U_m = U_m^0$ and applying lemma~\ref{TheoremB:Lemma1} with $\psi_0 = \psi_{y(Y_1)} = \psi_{\alpha_m}$ we get
    \begin{equation}
        \intl_{[U_m]} \varphi(ug) \psi_{y(Y_1)}^{-1}(u) \, du =
        \intl_{[U_m^1]} \varphi(ug) \psi_0^{-1}(u) \, du 
    \end{equation}
    since $\psi_0 \psi_{\alpha_{m-1}} = \psi_{\alpha_m,\alpha_{m-1}} = \psi_u$ for some $u$ that is not in the closure of the next-to-minimal orbit and thus the non-constant modes in lemma~\ref{TheoremB:Lemma1} can be expressed as Fourier coefficients $\mathcal{F}_{s,u}$ with $s = s_{U_m^1}$ from \eqref{eq:s-Umi} which vanish according to theorem~\ref{thm:ggsglobal}.
    
    Let us make an iteration in $1 \leq i \leq m-2$.
    Using lemma~\ref{TheoremB:Lemma1} we have that
    \begin{multline}
        \label{eq:ntm-rank1-induction}
        \intl_{[U_m^i]} \varphi(ug) \psi_0^{-1}(u) \, du = \\
        = \intl_{[U_m^{i+1}]} \hspace{-0.7em} \varphi(ug) \psi_0^{-1}(u) \, du + \hspace{-1.6em} \sum_{\gamma \in \Lambda_{m-i-1}(\psi_{\alpha_m}\!)} \, \intl_{[U_m^{i+1}]} \hspace{-0.7em} \varphi(u\hat\iota(\gamma) g) \psi_{\alpha_{m-i-1}, \alpha_m}^{-1}(u) \, du \, .
    \end{multline}
    
    Since $\psi_{\alpha_m, \alpha_{m-i-1}}$ is a character on $N$ trivial on the complement of $U_m^{i+1}$ we can expand the second term further with lemma~\ref{lem:col-expansion} (or remark~\ref{rem:col-expansion} if $m-i-1 = 2$ and $\psi_{\alpha_m, \alpha_{m-i-1}}$ is not trivial along at least two adjacent roots other than $\alpha_1$). This would lead to characters $\psi_u = \psi_{\alpha_m, \alpha_{m-i-1}, \alpha_{m-i-2}}$ \linebreak[3](or $\psi_u = \psi_{\alpha_m, \alpha_{m-i-1}}\psi_y$ with $y \in {}^t \lie c_2(F)$ respectively) where $u$ is not in the closure of the next-to-minimal orbit. 
    Then, $\mathcal{F}_{s,u}$ with $s = s_{U_m^{i+2}}$ from \eqref{eq:s-Umi} vanishes according to theorem~\ref{thm:ggsglobal} and the second term only receives the constant mode contribution.
    Repeating these arguments for the second term in \eqref{eq:ntm-rank1-induction}, it becomes
    \begin{equation}
        \sum_{\gamma \in \Lambda_{m-i-1}(\psi_{\alpha_m}\!)} \, \intl_{[V_m]} \varphi(u\hat\iota(\gamma) g) \psi_{\alpha_{m-i-1}, \alpha_m}^{-1}(u) \, du\,.             
    \end{equation}

    Iterating over $i$, starting from $i = 1$ above, we get that
    \begin{multline}
        \label{eq:ntm-rank-1-cols-done}
        \intl_{[U_m]} \varphi(ug) \psi_{y(Y_1)}^{-1}(u) \, du = \\
        \intl_{[V_m]} \varphi(ug) \psi_{\alpha_m}^{-1}(u) \, du + \sum_{j=1}^{m-2} \sum_{\gamma \in \Lambda_j(\psi_{\alpha_m}\!)}\, \intl_{[V_m]} \varphi(u\hat\iota(\gamma) g) \psi_{\alpha_j, \alpha_m}^{-1}(u) \, du \, .
    \end{multline}
    
    For $m = 1$, $U_1 = V_1$ and for $m = 2$ we only get the first term in \eqref{eq:ntm-rank-1-cols-done}.

    We will now use the methods of section~\ref{sec:thmA} to expand along rows.
    Using corollary~\ref{cor:ntm-row} case~\ref{itm:ntm-double}, we see that the second term in \eqref{eq:ntm-rank-1-cols-done} does not get any further contributions when expanding to $N$.
    Starting with the first term in \eqref{eq:ntm-rank-1-cols-done} and using corollary~\ref{cor:ntm-row} first with case~\ref{itm:ntm-single-i} to $V_{m+1}$ and then repeatedly with cases~\ref{itm:ntm-single-j} and \ref{itm:ntm-double} it becomes
    \begin{multline}
        \intl_{[V_{m+1}]} \varphi(ug) \psi_{\alpha_m}^{-1}(u) \, du = \\
        = \intl_{[N]} \varphi(ng) \psi_{\alpha_m}^{-1}(n) \, dn + \sum_{i=m+2}^{n-1} \sum_{\gamma \in \Gamma_i(\psi_{\alpha_m}\!)} \, \intl_{[N]} \varphi(n \iota(\gamma) g) \psi_{\alpha_m, \alpha_i}^{-1}(n) \, dn\,.
    \end{multline}

    Lastly,
    \begin{multline}
        \intl_{[U_m]} \varphi(ug) \psi_{y(Y_1)}^{-1}(u) \, du = \intl_{[N]} \varphi(ng) \psi_{\alpha_m}^{-1}(n) \, dn +{} \\ 
        + \sum_{j=1}^{m-2} \sum_{\gamma \in \Lambda_j(\psi_{\alpha_m}\!)}\, \intl_{[N]} \varphi(n\hat\iota(\gamma) g) \psi_{\alpha_j, \alpha_m}^{-1}(n) \, dn +{} \\ 
        + \sum_{i=m+2}^{n-1} \sum_{\gamma \in \Gamma_i(\psi_{\alpha_m}\!)} \, \intl_{[N]} \varphi(n \iota(\gamma) g) \psi_{\alpha_m, \alpha_i}^{-1}(n) \, dn \, . 
    \end{multline}

\item{\bf Next-to-minimal representation - rank 2.}
Let $\pi$ be an irreducible next-to-minimal automorphic representation of $\SL_n(\BA)$ and let $\varphi \in \pi$. 
We start from the integral
\begin{equation*}
\int_{[U_m]} \varphi (ug) \psi_{y(Y_2)}^{-1}(u) \,du\,.
\end{equation*}

For each root $\alpha$, let $X_{\alpha}$ be the corresponding one-dimensional root subgroup in $\SL_n$. Let 
$$C_1 = X_{e_m-e_{m+2}} \prod_{i=1}^{m-2} X_{e_i - e_{m+2}}\,,$$
and 
$$R_1 =  X_{e_{m-1}-e_{m}} \prod_{i=1}^{m-2} X_{e_{m-1} - e_{i}}\,.$$
Then $C_1$ is a subgroup of $U_m$. Let $U_m'$ be the subgroup of $U_m$ 
with $C_1$-part identically zero. Then one can see that the quadruple 
$$(U_m', C_1, R_1, \psi_{y(Y_2)})$$
satisfies all the conditions of Lemma \ref{exchangeroots}. By this lemma, 
\begin{align*}
\begin{split}
& \ \int_{[U_m]} \varphi (ug) \psi_{y(Y_2)}^{-1}(u) du\\
= & \ \int_{C_1(\BA)}\int_{[R_1U_m']} \varphi (ucg) \psi_{y(Y_2)}^{-1}(u)\, du\,dc\,.
\end{split}
\end{align*}

Let 
$$C_2 = \prod_{i=1}^{m-2} X_{e_i - e_{m+1}}\,,$$
and 
$$R_2 =  \prod_{i=1}^{m-2} X_{e_{m} - e_{i}}\,.$$
Then $C_2$ is a subgroup of $R_1U_m'$. Let $U_m''$ be the subgroup of $R_1U_m'$ 
with $C_2$-part identically zero. Then one can see that the quadruple 
$$(U_m'', C_2, R_2, \psi_{y(Y_2)})$$
satisfies all the conditions of Lemma \ref{exchangeroots}. Applying this lemma and by changing of variables, 
\begin{align}\label{theoremB:part2-equ1}
\begin{split}
& \ \int_{C_1(\BA)}\int_{[R_1U_m']} \varphi (ucg) \psi_{y(Y_2)}^{-1}(u) \,du\,dc\\
= & \ \int_{C_1(\BA)}\int_{C_2(\BA)}\int_{[R_2U_m'']} \varphi (uc_2c_1g) \psi_{y(Y_2)}^{-1}(u) \,du\,dc_2\,dc_1\\
= & \ \int_{(C_1C_2)(\BA)}\int_{[R_2U_m'']} \varphi (ucg) \psi_{y(Y_2)}^{-1}(u) \,du\,dc\,.
\end{split}
\end{align}

Let $\omega$ be the Weyl element sending torus elements 
$$(t_1, t_2, \ldots, t_n)$$
to torus elements 
$$(t_{m-1}, t_{m+2}, t_m, t_{m+1}, t_1, t_2, \ldots, t_{m-2}, t_{m+3}, t_{m+4}, \ldots, t_n)\,.$$
Conjugating $\omega$ cross from left, the integral
in \eqref{theoremB:part2-equ1} becomes
\begin{equation}\label{theoremB:part2-equ2}
\int_{C(\BA)}\int_{[U_m^{\omega}]} \varphi (u\omega cg) \psi_{y(Y_2)}^{\omega, -1}(u) \,du\,dc\,,
\end{equation}
where $U_m^{\omega} = \omega R_2U_m'' \omega^{-1}$, $C=C_1C_2$,
for $u \in U_m^{\omega}$, 
$\psi_{y(Y_2)}^{\omega}(u) = \psi_{y(Y_2)}(\omega^{-1} u \omega)$.  
$$U_m^{\omega} = U_m^{\omega,1}V_1\,,$$
where elements $u \in U_m^{\omega,1}$ have the following form
$$\begin{pmatrix}
I_2 & 0 \\
0 & u'
\end{pmatrix}\,,$$
and $U_m^{\omega,1}$ normalizes $V_1$. 
Recall that  $V_i$ be unipotent radical of parabolic subgroup of type $(1^i,n-i)$.
Note that $\psi_{y(Y_2)}^{\omega}|_{V_1}=\psi_{\alpha_1}$, 
$\psi_{y(Y_2)}^{\omega}|_{U_m^{\omega,1}}=\psi_{\alpha_3}$. 
Recall that $\alpha_1=e_1-e_2$, $\alpha_3=e_3-e_4$. 
Hence, the integral
in \eqref{theoremB:part2-equ2} becomes
\begin{equation}\label{theoremB:part2-equ3}
\int_{C(\BA)}\int_{[U_m^{\omega,1}]} \int_{[V_1]} \varphi (vu\omega cg)\psi_{\alpha_1}^{-1}(v) \psi_{\alpha_3}^{-1}(u) \,dv\,du\,dc\,.
\end{equation}

Since $\pi$ is an irreducible next-to-minimal automorphic representation of $\SL_n(\BA)$, 
by corollary~\ref{cor:ntm-row}, case~\ref{itm:ntm-single-i}, 
the integral
in \eqref{theoremB:part2-equ3} becomes
\begin{equation}\label{theoremB:part2-equ4}
\int_{C(\BA)}\int_{[U_m^{\omega,1}]} \int_{[V_2]} \varphi (vu\omega cg)\psi_{\alpha_1}^{-1}(v) \psi_{\alpha_3}^{-1}(u) \,dv\,du\,dc\,.
\end{equation}
$U_m^{\omega,1}$ still normalizes $V_2$, and 
$$U_m^{\omega,1}V_2 = U_m^{\omega,2}V_3\,,$$
where elements $u \in U_m^{\omega,2}$ have the following form
$$\begin{pmatrix}
I_4 & 0 \\
0 & u''
\end{pmatrix}\,,$$
$u''$ is in the radical of the parabolic subgroup of type $(m-2,n-m-2)$ in $\SL_{n-4}$, 
and $U_m^{\omega,2}$ normalizes $V_3$. 
Note that $\psi_{y(Y_2)}^{\omega}|_{V_3}=\psi_{\alpha_1,\alpha_3}$ and 
$\psi_{y(Y_2)}^{\omega}|_{U_m^{\omega,2}}$ is the trivial character. 
By corollary~\ref{cor:ntm-row}, case~\ref{itm:ntm-double}, 
the integral
in \eqref{theoremB:part2-equ4} becomes
\begin{equation}\label{theoremB:part2-equ5}
\int_{C(\BA)}\int_{[U_m^{\omega,2}]} \int_{[V_4]} \varphi (vu\omega cg)\psi_{\alpha_1,\alpha_3}^{-1}(v) \,dv\,du\,dc\,.
\end{equation}

Applying corollary~\ref{cor:ntm-row}, case~\ref{itm:ntm-double}, repeatedly,
the integral
in \eqref{theoremB:part2-equ5} becomes
\begin{equation*}
\int_{C(\BA)}\int_{[U_m^{\omega,2}]} \int_{[N]} \varphi (nu\omega cg)\psi_{\alpha_1,\alpha_3}^{-1}(n) \,dn\,du\,dc\,,
\end{equation*}
which becomes 
\begin{equation}\label{theoremB:part2-equ6}
\int_{C(\BA)}\int_{[U_m^{\omega,2}]} \int_{[N]} \varphi (n\omega cg)\psi_{\alpha_1,\alpha_3}^{-1}(n) \,dn\,du\,dc\,,
\end{equation}
by changing of variables. 
Since $\int_{[U_m^{\omega,2}]}du=1$, we have obtained that 
\begin{equation*}
\int_{[U_m]} \varphi (ug) \psi_{y(Y_2)}^{-1}(u) \,du
= \int_{C(\BA)}\int_{[N]} \varphi (n\omega cg)\psi_{\alpha_1,\alpha_3}^{-1}(n) \,dn\,dc\,.
\end{equation*}
\end{itemize-small}
This completes the proof of Theorem \ref{thm:max-parabolic}. \qed

\section{Proof of theorems \ref{thm:min-coeff} and \ref{thm:ntm-coeff}}
\label{sec:orbit-coefficients}
    
\textbf{Proof of Theorem \ref{thm:min-coeff}.}
Let $\pi$ be any irreducible automorphic representation of $\SL_n(\BA)$ and let $\varphi \in \pi$. The generalized Fourier coefficient of $\varphi$ attached to the partition $[21^{n-2}]$ has been defined in Section~\ref{sec:fourier}. We recall it as follows. 

Let $s=(1, -1, 0, \ldots, 0)$, and let $u = J_{[21^{n-2}]}$ which is a matrix zero everywhere except the (2,1) entry being $1$.  Then the generalized Fourier coefficient of $\varphi$ attached to the partition $[21^{n-2}]$ is as follows:
\begin{equation*}
\mathcal{F}^{[211\ldots]} (\varphi;g)  = \CF_{s,u} (\varphi;g)= \int_{[N_s]} \varphi(ng)\psi_u^{-1}(n) \,dn\,,
\end{equation*}
where elements in the one-dimensional unipotent $N_s$ have the form 
$$\begin{pmatrix}
1&* & 0\\
0&1&0\\
0 &0& I_{n-2}
\end{pmatrix}\,.$$

Let $X=\prod_{i=3}^{n} X_{e_1-e_i}$ and $Y=\prod_{i=3}^{n} X_{e_i-e_2}$. 
Then one can see that $Y(F)$ can be identified with the character space of $[X]$ as follows:
given $y \in Y(F)$, $\psi_y(x)=\psi_u([x,y])$, for any $x \in [X]$. 
Note that both $X$ and $Y$ normalize $N_s$. Taking the Fourier expansion of $\CF_{s,u} (\varphi)(g)$ along $[X]$, we obtain that 
$$\CF_{s,u} (\varphi;g)=\sum_{y\in Y(F)} \int_{[X]}\int_{[N_s]} \varphi(xng)\psi_u^{-1}(n) \psi_y^{-1}(x)\,dn\,dx\,.$$
Since $y^{-1}\in Y(F)$ and $\varphi$ is automorphic, 
the above integral becomes
\begin{align*}
& \ \sum_{y\in Y(F)} \int_{[X]}\int_{[N_s]} \varphi(xng)\psi_u^{-1}(n) \psi_y^{-1}(x)\,dn\,dx\\
= & \ \sum_{y\in Y(F)} \int_{[X]}\int_{[N_s]} \varphi(y^{-1}xng)\psi_u^{-1}(n) \psi_y^{-1}(x)\,dn\,dx\\
= & \ \sum_{y\in Y(F)} \int_{[X]}\int_{[N_s]} \varphi(y^{-1}xnyy^{-1}g)\psi_u^{-1}(n) \psi_y^{-1}(x)\,dn\,dx\\
= & \ \sum_{y\in Y(F)} \int_{[X]}\int_{[N_s]} \varphi(xn' y^{-1}g)\psi_u^{-1}(n) \psi_y^{-1}(x)\,dn\,dx\,,
\end{align*}
where $n'=n+[x,y]$. 
By changing variables, we obtain that 
\begin{align*}
 & \ \sum_{y\in Y(F)} \int_{[X]}\int_{[N_s]} \varphi(xn' y^{-1}g)\psi_u^{-1}(n) \psi_y^{-1}(x)dndx\\
= & \ \sum_{y\in Y(F)} \int_{[X]}\int_{[N_s]} \varphi(xn y^{-1}g)\psi_u^{-1}(n) 
\psi_u^{-1}(-[x,y]) 
\psi_y^{-1}(x)\,dn\,dx\,.
\end{align*}

Note that 
\begin{align*}
& \ \psi_u^{-1}(-[x,y]) 
\psi_y^{-1}(x)\\
= & \ \psi_u([x,y]) 
\psi_u(-[x,y]) \\
= & \ 1\,.
\end{align*}
Hence, we have that 
$$\CF_{s,u} (\varphi;g) = \sum_{y\in Y(F)} \int_{[X]}\int_{[N_s]} \varphi(xn y^{-1}g)\psi_u^{-1}(n) 
\,dn\,dx\,.$$
Note that $XN_s=U_1$ and $\psi_u=\psi_{\alpha_1}$. 
Therefore, we have that 
$$\CF_{s,u} (\varphi;g) = \sum_{y\in Y(F)} \int_{[U_1]} \varphi(u y^{-1}g)\psi_{\alpha_1}^{-1}(u) 
\,du\,.$$

This completes the proof of Theorem \ref{thm:min-coeff}. \qed

\textbf{Proof of Theorem \ref{thm:ntm-coeff}.}
Let $\pi$ be any irreducible automorphic representation of $\SL_n(\BA)$ and let $\varphi \in \pi$. The generalized Fourier coefficient of $\varphi$ attached to the partition $[2^21^{n-4}]$ has also been defined in Section~\ref{sec:fourier}. We recall it as follows. 

Let $s=(1, -1, 1, -1, 0, \ldots, 0)$, and let $u = J_{[2^21^{n-4}]}$ which is a matrix zero everywhere except the (2,1) and (4,3) entries being $1$.  Then the generalized Fourier coefficient of $\varphi$ attached to the partition $[2^21^{n-4}]$ is as follows:
\begin{equation*}
\mathcal{F}^{[221\ldots]}(\varphi;g) = 
\CF_{s,u} (\varphi;g)= \int_{[N_s]} \varphi(ng)\psi_u^{-1}(n) \,dn\,,
\end{equation*}
where elements in $N_s$ have the form 
$$\begin{pmatrix}
1&* & 0 & * & 0\\
0&1&0&0&0\\
0&*&1&*&0\\
0&0&0&1&0\\
0 &0& 0&0&I_{n-4}
\end{pmatrix}\,.$$
Let $\omega$ be the Weyl element sending the torus element
$$(t_1, t_2, \ldots, t_n)$$
to the torus element
$$(t_1, t_3, t_4, t_2, t_5, t_6, \ldots, t_n)\,.$$
Conjugating $\omega$ across from left, we obtain that 
\begin{equation*}
\CF_{s,u} (\varphi;g)= \int_{[N_s^{\omega}]} \varphi(n\omega g)\psi_u^{\omega,-1}(n) \,dn\,,
\end{equation*}
where $N_s^{\omega} = \omega N_s \omega^{-1}$, and for $n \in N_s^{\omega}$,
$\psi_u^{\omega}(n) =\psi_u(\omega^{-1} n \omega)$. 
Elements in $n \in N_s^{\omega}$ have the following form 
$$n=n(z)=\begin{pmatrix}
I_2&z & 0\\
0&I_2&0\\
0 &0& I_{n-4}
\end{pmatrix}\,,$$
and $\psi_u^{\omega}(n)=\psi(z_{1,2}+z_{2,1})$.

Let 
$$X'=\prod_{i=5}^{n} X_{e_1-e_i}\prod_{i=5}^{n} X_{e_2-e_i}$$
 and 
$$Y'=\prod_{i=5}^{n} X_{e_i-e_4}\prod_{i=5}^{n} X_{e_i-e_3}\,.$$
Then one can see that $Y'(F)$ can be identified with the character space of $[X']$ as follows:
given $y \in Y'(F)$, $\psi_y(x)=\psi_u^{\omega}([x,y])$, for any $x \in [X']$. 
Note that both $X'$ and $Y'$ normalize $N_s$. Taking the Fourier expansion of $\CF_{s,u} (\varphi)(g)$ along $[X']$, we obtain that 
$$\CF_{s,u} (\varphi;g)=\sum_{y\in Y'(F)} \int_{[X']}\int_{[N_s^{\omega}]} \varphi(xn\omega g)\psi_u^{\omega,-1}(n) \psi_y^{-1}(x)\,dn\,dx\,.$$
Since $y^{-1}\in Y'(F)$ and $\varphi$ is automorphic, 
the above integral becomes
\begin{align*}
& \ \sum_{y\in Y'(F)} \int_{[X']}\int_{[N_s^{\omega}]} \varphi(xn\omega g)\psi_u^{\omega,-1}(n) \psi_y^{-1}(x)\,dn\,dx\\
= & \ \sum_{y\in Y'(F)} \int_{[X']}\int_{[N_s^{\omega}]} \varphi(y^{-1}xn\omega g)\psi_u^{\omega,-1}(n) \psi_y^{-1}(x)\,dn\,dx\\
= & \ \sum_{y\in Y'(F)} \int_{[X']}\int_{[N_s^{\omega}]} \varphi(y^{-1}xnyy^{-1}\omega g)\psi_u^{\omega,-1}(n) \psi_y^{-1}(x)\,dn\,dx\\
= & \ \sum_{y\in Y'(F)} \int_{[X']}\int_{[N_s^{\omega}]} \varphi(xn' y^{-1}\omega g)\psi_u^{\omega,-1}(n) \psi_y^{-1}(x)\,dn\,dx\,,
\end{align*}
where $n'=n+[x,y]$. 
By changing variables, we obtain that 
\begin{align*}
 & \ \sum_{y\in Y'(F)} \int_{[X']}\int_{[N_s^{\omega}]} \varphi(xn' y^{-1}\omega g)\psi_u^{\omega,-1}(n) \psi_y^{-1}(x)\,dn\,dx\\
= & \ \sum_{y\in Y'(F)} \int_{[X']}\int_{[N_s^{\omega}]} \varphi(xn y^{-1}\omega g)\psi_u^{\omega,-1}(n) 
\psi_u^{\omega,-1}(-[x,y]) 
\psi_y^{-1}(x)\,dn\,dx\,.
\end{align*}

Note that 
\begin{align*}
& \ \psi_u^{\omega,-1}(-[x,y]) 
\psi_y^{-1}(x)\\
= & \ \psi_u^{\omega}([x,y]) 
\psi_u^{\omega}(-[x,y]) \\
= & \ 1\,.
\end{align*}
Hence, we have that 
$$\CF_{s,u} (\varphi;g) = \sum_{y\in Y'(F)} \int_{[X']}\int_{[N_s^{\omega}]} \varphi(xn y^{-1} \omega g)\psi_u^{-1}(n) 
\,dn\,dx\,.$$
Note that $X'N_s^{\omega}=U_2$ and $\psi_u=\psi_{y(Y_2)}$, using the notation from section~\ref{sec:thmB}.
Therefore, we have that 
$$\CF_{s,u} (\varphi;g) = \sum_{y\in Y'(F)} \int_{[U_2]} \varphi(u y^{-1}\omega g)\psi_{y(Y_2)}^{-1}(u) 
\,du\,dx\,.$$

This completes the proof of Theorem \ref{thm:ntm-coeff}. \qed

\section{Applications}
\label{sec:sl5}

As is evident from table~\ref{tab:duality}, the case $\SL_5$ appears in the list of symmetry and duality groups in string theory. It is related to compactification of type II string theory on a three-torus $T^3$ from ten to seven spacetime dimensions. Fourier coefficients of automorphic forms on $\SL_5$ are related to non-perturbative effects as discussed in the introduction. Therefore we analyse here in some detail the structure of Fourier coefficients for automorphic forms attached to a minimal or next-to-minimal automorphic representation of $\SL_5$ that are relevant to the first two higher-derivative corrections in four-graviton scattering amplitudes.

We will give a detailed description of how the formalism developed above can be used to calculate explicit expressions for Fourier coefficients on maximal parabolic subgroups for automorphic forms attached to a minimal or next-to-minimal automorphic representation. Following a general discussion, we will treat two explicit examples for $n = 5$.

\subsection{Generalities}
With applications to string theory in mind, throughout this section we are restricting to $F = \BQ$ and let $\BA \equiv \BA_{\BQ}$. The types of expressions that are of interest are of the form:
\begin{equation}
    \mathcal{F}^{\BR}(\varphi, \psi; g) = \intl_{U(\BZ) \bs U(\BR)}\varphi(ug) \psi^{-1}(u) \, du\,,
    \label{eqn:realunipotentcoeff}
\end{equation}
where $U(\BR)$ is a parabolic subgroup of $\mathrm{G}(\BR)$, $\psi$ is some rank-1 or rank-2 character on $U(\BR)$ and $\varphi$ is an automorphic form in the minimal- or next-to-minimal automorphic representations of $\mathrm{G}(\BR)$. Any such coefficient can be brought to a standard form using the action of the arithmetic Levi subgroup $L(\BZ)$. For rank-1 this form is $\psi = \psi_{y(kY_1)}$ for some integer $k\neq 0$ and for rank-2 one has $\psi(k_1 u_{m,m+1}+k_2 u_{m-1,m+2})$ for integers $k_1$ and $k_2$, cf.~\eqref{eq:psi-Y2}. For simplicity, we will restrict ourselves to the case $Y_{y(Y_2)}$ corresponding to $k_1=k_2=1$ and demonstrate how to apply theorem \ref{thm:max-parabolic}. The techniques demonstrated here allow for the calculation of all such Fourier coefficients for automorphic forms in the minimal and next-to-minimal representations on $\SL_n$.

In order to apply theorem \ref{thm:max-parabolic}, we first perform an adelic lift~\cite{FGKP18}
\begin{equation}
    \mathcal{F}^{\BR}(\varphi, \psi; g_\infty) = \mathcal{F}^{\BA}(\varphi, \psi; (g_\infty, I_n, I_n, \cdots)) = \intl_{[U]}\varphi(u(g_\infty, I_n, I_n, \dots)) \psi^{-1}(u)  \,du\,
    .
\end{equation}
The theorem now gives $\mathcal{F}^{\BA}$ in terms of adelic Whittaker functions. These Whittaker functions will then be evaluated using the adelic reduction formula
\begin{equation}
    W_\psi(\lambda, a) = \sum_{w_c w_0' \in \mathcal{C}_\psi}a^{(w_c w_0')^{-1}\lambda + \rho} M(w_c^{-1}, \lambda) W_{\psi^a}'(w_c^{-1}\lambda, 1)
    \label{eqn:reduction}
\end{equation}
of \cite{FKP14}. The power of this formula lies in that it expresses a degenerate Whittaker function evaluated on the Cartan torus of a group $\mathrm{G}(\BA)$ as a sum of generic Whittaker functions on a subgroup $\mathrm{G}'(\BA)$. This subgroup $\mathrm{G}'(\BA)$ is determined by deleting all nodes in the Dynkin diagram of $\mathrm{G}(\BA)$ on which $\psi$ is not supported. $\lambda$ denotes the weight of the Eisenstein series, $w_0'$ denotes the longest Weyl word on $\mathrm{G}'$, $\mathcal{C}_\psi$ denotes the set
\begin{equation}
    \mathcal{C}_\psi = \{ w \in \mathcal{W} \;| \;w \Pi' < 0 \}
\end{equation}
where $\Pi'$ is the set of simple roots of $\mathrm{G}'$ and $w_c$ is hence the summation variable and corresponds to a specific representative of the quotient Weyl group $\mathcal{W}/\mathcal{W}'$ described in~\cite{FKP14}. $\rho$ denotes the Weyl vector, $M$ denotes the intertwiner
\begin{equation}
    M(w, \lambda) = \prod_{\substack{\alpha > 0 \\ w\alpha < 0}} \frac{\xi(\langle \lambda | \alpha \rangle)}{\xi(\langle \lambda | \alpha \rangle + 1)}
\end{equation}
as featured in the Langlands constant term formula, where $\xi$ is the completed Riemann zeta function and $\psi^a$ denotes the ``twisted character'' both defined in appendix \ref{app:euler}.

The evaluation of a real Fourier coefficient $\mathcal{F}^{\BR}$ over a unipotent schematically looks like
\begin{equation}
    \begin{aligned}
        & \mathcal{F}^{\BR}(\varphi, \psi; g_\infty) = \mathcal{F}^{\BA}(\varphi, \psi; (g_\infty, I_n, I_n, \cdots)) && \text{Adelic lift} \\
        ={}& \sum_\psi \sum_{l\in \Lambda \text{ or } l\in\Gamma} W_{\psi}(l(g_\infty, I_n, I_n, \cdots)) && \text{Theorem \ref{thm:max-parabolic}} \\
        ={}& \sum_\psi \sum_{l\in \Lambda \text{ or } l\in\Gamma} W_{\psi}((n_\infty a_\infty k_\infty, n_2 a_2 k_2, n_3 a_3 k_3, \cdots)) && \text{Iwasawa-decomposition} \\
        ={}& \sum_\psi \left( \prod_{p\leq \infty} \psi_p(n_p) \right) \sum_{l\in \Lambda \text{ or } l\in\Gamma} W_{\psi}((a_\infty, a_2,a_3, \cdots)) && W_\psi(nak) = \psi(n) W_\psi(a) \\
        ={}& \sum_\psi \psi_{\infty}(n_{\infty}) \sum_{l\in \Lambda \text{ or } l\in\Gamma} \sum_{w} a^{\ldots} M(\cdots) W_{\psi^a}'(\cdots, 1) && \text{Reduction formula \eqref{eqn:reduction}}
    .
    \end{aligned}
\end{equation}
The fourth line extracts the unipotent $n_p$-dependence at each of the local places $p \leq \infty$. In the fifth line we have used that only the archimedean unipotent $n_\infty$ contributes. The reason that the $p$-adic unipotent matrices $n_p$ of the $p$-adic Iwasawa-decomposition of $l \in \mathrm{G}(F) \subset \mathrm{G}(\BQ_p)$ above drop out is as follows. In using theorem \ref{thm:max-parabolic}, we will be faced with evaluating Whittaker functions such as
\begin{equation}
    \begin{aligned}
        W_{\alpha_j, \alpha_m}(\hat{\iota}(\lambda_{j}) g) &\quad{}\text{for}\quad{} j \leq m-2 \quad{}\text{where}\quad{} \lambda_{j} \in \Lambda_{j} \quad{}\text{and}\quad{} \\
        W_{\alpha_m, \alpha_i}(\iota(\gamma_i) g) &\quad{}\text{for}\quad{} i \geq m+2 \quad{}\text{where}\quad{} \gamma_i \in \Gamma_i\,
        .
    \end{aligned}
\end{equation}
We have that $\gamma_i$ and $\lambda_{j}$ are embedded in $\SL_n$ as (cf.~\eqref{eq:iota})
\begin{equation}
    \hat{\iota}(\lambda_{j}) = 
    \left(
    \begin{smallmatrix}
        \lambda_{j} \\
        & I_{n-j}
    \end{smallmatrix}
    \right)
    \quad{}\text{and}\quad{}
    \iota(\gamma_i) = 
    \left(
    \begin{smallmatrix}
        I_i \\
        & \gamma_i
    \end{smallmatrix}
    \right)\,
    .
\end{equation}
It is clear from their block-diagonal form that the unipotent $n_p$ in the $p$-adic Iwasawa-decomposition of $\hat{\iota}(\lambda_j)$ (and $\iota(\gamma_i)$) will feature the same block-diagonal form. Since $W_{\alpha_j, \alpha_m}$ (and $W_{\alpha_m, \alpha_i}$) is only sensitive to the unipotent on rows $j$ and $m \geq j+2 > j$ (on rows $i$ and $m \leq i-2 \leq i$), the block diagonal structure of $n_p$ implies $\psi_{\alpha_j, \alpha_m; p}(n_p) = 1$ (and $\psi_{\alpha_m, \alpha_i; p}(n_p) = 1$).

For a real matrix $g \in \SL_n(\BR)$, we will denote its Iwasawa-decomposition
\begin{equation}
\label{eq:realIwa}
    g = n_\infty a_\infty k_\infty = 
    \left( 
    \begin{smallmatrix}
        1 & x_{12} & \cdots & \cdots & x_{1n} \\
          & 1 & \ddots & \ddots & \vdots \\
          & & \ddots & \ddots & \vdots \\
          & & & 1 & x_{n-1, n} \\
          & & & & 1
    \end{smallmatrix}
    \right)
    \left( 
    \begin{smallmatrix}
        y_1 & & & & \\
          & y_2/y_1 & & & \\
          & & \ddots & & \\
          & & & y_{n-1}/y_{n-2} & \\
          & & & & 1/y_{n-1}
    \end{smallmatrix}
    \right)
    k_{\infty}\,
    .
\end{equation}
Similarly, for a $p$-adic matrix $g \in \SL_n(\BQ_p)$ we denote it as
\begin{equation}
    g = n_p a_p k_p = 
    n_p
    \left( 
    \begin{smallmatrix}
        \eta_{1, p} & & & & \\
        & \eta_{2, p}/\eta_{1, p} & & & \\
          & & \ddots & & \\
          & & & \eta_{n-1, p}/\eta_{n-2, p} & \\
          & & & & 1/\eta_{n-1, p}
    \end{smallmatrix}
    \right)
    k_p\,
    .
\end{equation}
Appendix \ref{sec:iwasawa} contains closed formulae for the $x$'s and the $y$'s, as well as a closed formula for the $p$-adic norm $|\eta_{i, p}|_p$ of the $\eta$'s.

In what follows, we will make use of all formulae that are derived or stated in appendices \ref{app:euler}, \ref{sec:iwasawa} and \ref{sec:cosets} along with the following notation
\begin{itemize}\setlength{\itemsep}{1mm}
    \item A prime on a variable, eg.\ $x'$, generally denotes $x' \neq 0$.
    \item For sums we write $\displaystyle \sum_x \equiv \sum_{x \in \BQ}$.
    \item We write $\displaystyle \sum_{x'} f(x) \equiv \sum_{x \in \BQ \bs \{0\}} f(x)$ and $\displaystyle \sum_{x' \in \BZ} f(x) \equiv \sum_{x \in \BZ \bs \{0\}} f(x)$. Note that the prime is used to indicate whether or not zero is included in the sum but the prime is omitted in the summand.
    \item For products we write $\displaystyle \prod_p \equiv \prod_{p \text{ prime}}$. Writing $\displaystyle \prod_{p \leq \infty}$ denotes the product over all primes $p$ (the non-archimedean places) as well as the element $p = \infty$ (the archimedean place).
    \item For $x \in \BR$ we denote $\e{x} \equiv e^{2\pi i x}$.
\end{itemize}

\subsection{Example: Rank-1 coefficient of \texorpdfstring{$\pi_{\text{min}}$}{pimin} on \texorpdfstring{$P_{\alpha_4}\subset \SL_5$}{Palpha4 in SL5}}

Here, we will calculate the real rank-1 Fourier coefficient \eqref{eqn:realunipotentcoeff} for the minimal Eisenstein series $E(\lambda; g)$ with $\lambda = 2s\Lambda_1 - \rho$ in the maximal parabolic
\begin{equation}
    P_{\alpha_4} = \GL(4) \times \GL(1) \times U_{\alpha_4} \subset \SL(5) \quad{}\text{subject to } \quad \det(\GL(4) \times \GL(1)) = 1\,,
\end{equation}
associated with removing the ``last'' node in the Dynkin diagram of $\SL(5)$. The unipotent radical is
\begin{equation}
    U(\BR) = U_{\alpha_4}(\BR) = 
    \left\{
    \left(
    \begin{smallmatrix}
        1 & & & & * \\
          &1& & & * \\
          & &1& & * \\
          & & &1& * \\
          & & & & 1
    \end{smallmatrix}
    \right)
    \right\}\,
    .
\end{equation}

Theorem \ref{thm:max-parabolic} gives for the unramified character $\psi_{y(Y_1)}$ that
\begin{align}
    \mathcal{F}^{\BA}(E(2s\Lambda_1 - \rho), \psi_{y(Y_1)}; g) ={}& W_{\alpha_4}(g) \,
    .
\end{align}

\begin{table}[t]
    \begin{align*}
        \begin{tabu}{cl|c|c|c}
            &w_c & \left\langle w_c^{-1} \lambda + \rho | \alpha_4 \right\rangle & M\left( w_c^{-1}, \lambda \right) & \left( w_c w_0' \right)^{-1} \lambda + \rho \\
            \hline
            &\operatorname{Id} & 0 & 1 & \dots \\
            &w_{1} & 0 & \dots & \dots \\
            &w_{12} & 0 & \dots & \dots \\
            *&w_{123} & 2\left( s - \frac{3}{2} \right) & \frac{\xi(2s-3)}{\xi(2s)} & [0, 0, 0, 5 - 2s] \\
        \end{tabu}
    \end{align*}
    \caption{\label{tab:2}Data for the reduction formula \eqref{eqn:reduction} to evaluate $W_{\alpha_4}(a)$ on $\SL_5$ with $\lambda = 2s\Lambda_1 - \rho$. The star indicates the one and only row that contributes in the sum over Weyl words.}
\end{table}
The Whittaker function is found by the reduction formula with data given in table~\ref{tab:2}. In this case, there is no diagonally embedded rational matrix $l$, or equivalently $l = I_5$, in the general procedure and hence we have $|\eta_{1, p}|_p = |\eta_{2, p}|_p = |\eta_{3, 4}|_p = |\eta_{4, p}|_p = 1$. We get
\begin{align}
  \begin{aligned}
    & W_{\alpha_4}(\lambda; (g_\infty, I_5, I_5, \cdots)) = \\
    ={}& \e{x_{45}} \left( y_4^{5-2s} \frac{\xi\left( 2s-3 \right)}{\xi\left( 2s \right)} \prod_{p < \infty} |\eta_{4, p}|_p^{5-2s} \right) B_{s-3/2}\left( \frac{y_4^2}{y_3}, 1 \right) \\
    & \times \prod_{p < \infty} \gamma_{p}\left( \frac{\eta_{4, p}^2}{\eta_{3, p}} \right) \left( 1 - p^{-2(s-3/2)} \right) \frac{1-p^{-2(s-3/2)+1} \left|\frac{\eta_{4, p}^2}{\eta_{3, p}} \right|_{p}^{2(s-3/2)-1}}{1-p^{-2(s-3/2)+1}} \\
    ={}& \e{x_{45}} y_4^{5-2s} \frac{1}{\xi\left( 2s \right)} 2 \left| \frac{y_4^2}{y_3}\right|_\infty^{s-2} K_{s-2}\left( 2\pi \left| \frac{y_4^2}{y_3} \right|_\infty \right) \\
    ={}& 2 \e{x_{45}} y_3^{2-s} y_4 \frac{1}{\xi\left( 2s \right)}  K_{s-2}\left( 2\pi \left| \frac{y_4^2}{y_3} \right|_\infty \right) = \mathcal{F}^{\BR}(E(2s\Lambda_1 - \rho), \psi_{y(Y_1)}; g_\infty)\,.
    \label{eqn:unramifiedminimal}
  \end{aligned}
\end{align}
The $x$'s and $y$'s are the Iwasawa coordinates for the matrix $g_\infty$ as in~\eqref{eq:realIwa}. The function $B_s$ that appears is a more compact way of writing the $\SL_2$ Whittaker vector defined explicitly in~\eqref{eq:SL2Whitt}.

Parameterizing $g_\infty$ as
\begin{equation}
    g_\infty = ue = 
    \begin{psmallmatrix}
        I_4 & Q \\
        0   & 1
    \end{psmallmatrix}
    \begin{psmallmatrix}
        r^{-1/4}e_4 & 0 \\
        0 & r
    \end{psmallmatrix}
    \quad{}\text{where}\quad{} e_4 \in \SL_4(\BR)\,,
\end{equation}
we get in particular that
\begin{equation}
    y_3 = r^{-3/4}||N e_4|| \quad{}\text{and}\quad{} y_4 = r^{-1}\,,
\end{equation}
where $N = 
\displaystyle
\begin{psmallmatrix}
    0 & 0 & 0 & 1
\end{psmallmatrix}
$ so that $N e_4$ is equal to the last row in $e_4$. This is obtained using the formula \eqref{eqn:realdilatons}. We get in particular that
\begin{equation}
    y_3^{2-s} y_4 = r^{2s-5}\left( r^{-5/4} ||N e_4 || \right)^{s-2}  \quad{}\text{and}\quad{} \frac{y_4^2}{y_3} = r^{-5/4} ||N e_4 ||
   \, .
\end{equation}
The more general (real) ramified Fourier coefficient has the expression
\begin{equation}
    \begin{aligned}
        &\int E\left( 2s\Lambda_1 - \rho; 
        \begin{psmallmatrix}
            1 &   &   &   & u_1 \\
            & 1 &   &   & u_2 \\
            &   & 1 &   & u_3 \\
            &   &   & 1 & u_4 \\
            &   &   &   & 1
        \end{psmallmatrix}
        g_\infty
        \right)
        \overline{\e{m_1 u_1 + m_2 u_2 + m_3 u_3 + m'_4 u_4}} d^4u
        = \\
        ={}& 
        \e{x_{45}} r^{2s-5} \frac{2}{\xi(2s)} \sigma_{4-2s}(k) \left( r^{-5/4} ||N e_4 || \right)^{s-2} K_{s-2}\left( 2\pi r^{-5/4}||N e_4|| \right)\\
        =&\e{x_{45}} r^{\frac{3s}{4}-\frac{5}{2}} \frac{2}{\xi(2s)} \frac{\sigma_{2s-4}(k)}{|k|^{s-2}} ||\tilde{N} e_4 ||^{s-2} K_{s-2}\left( 2\pi|k|  r^{-5/4}||\tilde{N} e_4|| \right)\label{eq:minFC}
    \end{aligned}
\end{equation}
for integer $m$'s while for non-integer rational $m$'s it vanishes. Here $g_\infty$ has been parametrized as above, $N = 
\displaystyle
\begin{psmallmatrix}
    m_1 & m_2 & m_3 & m'_4
\end{psmallmatrix}=k \tilde{N}
$, $k = \gcd(N)$ and $m'_4 \neq 0$. This expression can also be found by starting from $\psi_{y(kY_1)}$ for the standard Fourier coefficient instead. This corresponds to $N = 
\displaystyle
\begin{psmallmatrix}
    0 & 0 & 0 &k
\end{psmallmatrix}$  and its $L(\BZ)$ orbit gives the general expression~\eqref{eq:minFC}. 
Formula~\eqref{eq:minFC} agrees with~\cite[Eq.~(H.37)]{GMV15} where the Fourier coefficients were computed by Poisson resummation technique after a translation of conventions.

\subsection{Example: Rank-1 coefficient of \texorpdfstring{$\pi_{\text{ntm}}$}{pintm} on \texorpdfstring{$P_{\alpha_4} \subset \SL_5$}{Palpha4 in SL5}}

Here, we will calculate the real rank-1\footnote{There is no rank-2 character for this parabolic.} Fourier coefficient \eqref{eqn:realunipotentcoeff} for the next-to-minimal Eisenstein series $E(\lambda; g)$ with $\lambda = 2s\Lambda_2 - \rho$ in the maximal parabolic
\begin{equation}
    P_{\alpha_4} = \GL(4) \times \GL(1) \times U_{\alpha_4} \subset \SL(5) \quad{}\text{subject to } \quad \det(\GL(4) \times \GL(1)) = 1
\end{equation}
associated with removing the ``last'' node in the Dynkin diagram of $\SL(5)$. The unipotent radical is
\begin{equation}
    U(\BR) = U_{\beta_4}(\BR) = 
    \left\{
    \left(
    \begin{smallmatrix}
        1 & & & & * \\
          &1& & & * \\
          & &1& & * \\
          & & &1& * \\
          & & & & 1
    \end{smallmatrix}
    \right)
    \right\}\,
    .
\end{equation}

Theorem \ref{thm:max-parabolic} gives
\begin{align}
  \begin{aligned}
      & \mathcal{F}^{\BA}(E(2s\Lambda_2 - \rho), \psi_{y(Y_1)}; g) = \\
      ={}& W_{\alpha_4}(g) + \sum_{\lambda_1 \in \Lambda_1} W_{\alpha_1, \alpha_4}(\lambda_1 g) + \sum_{\lambda_2 \in \Lambda_2} W_{\alpha_2, \alpha_4}(\lambda_2 g) \\
      ={}& W_{\alpha_4}(g) \\
      &+{} 
      \sum_{z'} W_{\alpha_1, \alpha_4}
      \Bigg(
      \underbrace{
          \left(
          \begin{smallmatrix}
              z \\
              & 1 \\
              & & 1 \\
              & & & 1 \\
              & & & & 1/z
          \end{smallmatrix}
          \right)
      }_{l_z}
      g
      \Bigg)
      \\
      &+{} 
      \sum_{x', y} W_{\alpha_2, \alpha_4}
      \bigg( 
      \underbrace{
      \left(
      \begin{smallmatrix}
          x^{-1} \\
          y & x \\
          & & I_3 \\
      \end{smallmatrix}
      \right)
      }_{l_{xy}}
      g
      \bigg)
      \\
      &+{} 
      \sum_{x'} W_{\alpha_2, \alpha_4}
      \bigg(
      \underbrace{
      \left(
      \begin{smallmatrix}
          0 & -x^{-1} \\
          x & 0 \\
          & & I_3 \\
      \end{smallmatrix}
      \right)
      }_{l_x}
      g
      \bigg)\,,
  \end{aligned}
\end{align}
using the representatives derived in appendix \ref{sec:cosets}.

\begin{table}[t]
    \begin{align*}
        \begin{tabu}{cl|c|c|c}
            &w_c & \left\langle w_c^{-1} \lambda + \rho | \alpha_4 \right\rangle & M\left( w_c^{-1}, \lambda \right) & \left( w_c w_0' \right)^{-1} \lambda + \rho \\
            \hline
            &\operatorname{Id} & 0 & 1 & \dots \\
            &w_{2} & 0 & \dots & \dots \\
            &w_{21} & 0 & \dots & \dots \\
            *&w_{23} & 2\left( s - 1 \right) & \frac{\xi(2s-2)}{\xi(2s)} & [2s-1, 0, 0, 4 - 2s] \\
            *&w_{213} & 2\left( s - 1 \right) & \frac{\xi(2s-2)^2}{\xi(2s) \xi(2s-1)} & [3-2s, 2s-2, 0, 4 - 2s] \\
            *&w_{2132} & 2\left( s - 1 \right) & \frac{\xi(2s-3)\xi(2s-2)}{\xi(2s)\xi(2s-1)} & [0, 4-2s, 2s-3, 4 - 2s] \\
            &w_{213243} & 0 & \dots & \dots \\
        \end{tabu}
    \end{align*}
    \caption{\label{tab:3}Data for the reduction formula \eqref{eqn:reduction} to evaluate $W_{\alpha_4}(a)$ on $\SL_5$ with $\lambda = 2s\Lambda_2 - \rho$. The stars indicate which rows contribute in the sum over Weyl words.}
\end{table}
The first Whittaker function is found by the reduction formula with the data of table~\ref{tab:3}. In this case, there is no diagonally embedded rational matrix $l$, or equivalently $l = I_5$, and hence we have $|\eta_{1, p}|_p = |\eta_{2, p}|_p = |\eta_{3, 4}|_p = |\eta_{4, p}|_p = 1$. We get{\allowdisplaybreaks
\begin{align}
\label{ex1part1}\\
    & W_{\alpha_4}(\lambda; (g_\infty, I_n, I_n, \cdots)) = \\
    ={}& \e{x_{45}} B_{s-1}\left( \frac{y_4^2}{y_3}, 1 \right) \left( y_1^{2s-1} y_4^{4-2s} \frac{\xi\left( 2s-2 \right)}{\xi\left( 2s \right)} \prod_{p < \infty} |\eta_{1, p}|_p^{2s-1}|\eta_{4, p}|_p^{4-2s} \right. \\
    &{}+ \left. y_1^{3-2s} y_2^{2s-2} y_4^{4-2s} \frac{\xi(2s-2)^2}{\xi(2s) \xi(2s-1)} \prod_{p < \infty} |\eta_{1, p}|_p^{3-2s} |\eta_{2, p}|_p^{2s-2} |\eta_{4, p}|_p^{4-2s} \right. + \\
    &{}+ \left. y_2^{4-2s} y_3^{2s-3} y_4^{4-2s} \frac{\xi(2s-3)\xi(2s-2)}{\xi(2s)\xi(2s-1)} \prod_{p < \infty} |\eta_{2, p}|_p^{4-2s} |\eta_{3, p}|_p^{2s-3} |\eta_{4, p}|_p^{4-2s} \right)  \\
    & \prod_{p < \infty} \gamma_{p}\left( \frac{\eta_{4, p}^2}{\eta_{3, p}} \right) \left( 1 - p^{-2(s-1)} \right) \frac{1-p^{-2(s-1)+1} \left|\frac{\eta_{4, p}^2}{\eta_{3, p}} \right|_{p}^{2(s-1)-1}}{1-p^{-2(s-1)+1}} \\
    ={}& \e{x_{45}} \left( y_1^{2s-1} y_4^{4-2s} \frac{1}{\xi\left( 2s \right)} + y_1^{3-2s} y_2^{2s-2} y_4^{4-2s} \frac{\xi(2s-2)}{\xi(2s) \xi(2s-1)}\right. \\
    &{}+ \left. y_2^{4-2s} y_3^{2s-3} y_4^{4-2s} \frac{\xi(2s-3)}{\xi(2s)\xi(2s-1)} \right) 2 \left| \frac{y_4^2}{y_3}\right|_\infty^{s-3/2} K_{s-3/2}\left( 2\pi \left| \frac{y_4^2}{y_3} \right|_\infty \right) \\
    ={}& 2 \e{x_{45}} \left( y_1^{2s-1} y_3^{3/2-s} y_4 \frac{1}{\xi\left( 2s \right)} + y_1^{3-2s} y_2^{2s-2} y_3^{3/2-s} y_4 \frac{\xi(2s-2)}{\xi(2s) \xi(2s-1)}\right. \\
    &{}+ \left. y_2^{4-2s} y_3^{s-3/2} y_4 \frac{\xi(2s-3)}{\xi(2s)\xi(2s-1)} \right) K_{s-3/2}\left( 2\pi \left| \frac{y_4^2}{y_3} \right|_\infty \right)\,. 
\end{align}
The $x$'s and $y$'s are the Iwasawa coordinates for the matrix $g_\infty$ as in~\eqref{eq:realIwa}.}

\begin{table}[t]
    \begin{align*}
        \begin{tabu}{cl|c|c|c|c}
            &w_c & \left\langle w_c^{-1} \lambda + \rho | \alpha_1 \right\rangle & \left\langle w_c^{-1} \lambda + \rho | \alpha_4 \right\rangle & M\left( w_c^{-1}, \lambda \right) & \left( w_c w_0' \right)^{-1} \lambda + \rho \\
            \hline
            &\operatorname{Id} & 0 & 0 & 1 & \dots \\
            &w_{2} & 2\left( s - \frac{1}{2} \right) & 0 & \dots & \dots \\
            *&w_{23} & 2\left( s-\frac{1}{2} \right) & 2(s-1) & \frac{\xi(2s-2)}{\xi(2s)} & v \\
            &w_{2132} & 0 & 2(s-1) & \dots & \dots \\
            &w_{213243} & 0 & 0 & \dots & \dots \\
        \end{tabu}
    \end{align*}
    \caption{\label{tab:4}Data for the reduction formula \eqref{eqn:reduction} to evaluate $W_{\alpha_1, \alpha_4}(a)$ on $\SL_5$ with $\lambda = 2s\Lambda_2 - \rho$. The stars indicate which Weyl words contribute to the reduction formula. We wrote $v = [3-2s, 2s-2, 0, 4-2s]$ here to conserve space.}
\end{table}

The second Whittaker function is found by the reduction formula with the data given in table~\ref{tab:4}. The $p$-adic Iwasawa-decomposition of $l_z$ has
\begin{equation}
    |\eta_{1, p}|_p = |\eta_{2, p}|_p = |\eta_{3, p}|_p = |\eta_{4, p}|_p = |z|_p\,
    .
\end{equation}
{\allowdisplaybreaks We get
\begin{align}
\label{ex1part2}\\
      \sum_{z'} {}& W_{\alpha_1, \alpha_4}\left( \lambda; l_z (g_\infty, I_n, I_n, \cdots) \right) = \\
    = \sum_{z'} {}& \e{x_{12} + x_{45}} y_1^{3-2s} y_2^{2s-2} y_4^{4-2s} \frac{\xi(2s-2)}{\xi(2s)}  \\
    & B_{s-1/2}\left( \frac{y_1^2}{y_2}, 1 \right) B_{s-1}\left( \frac{y_4^2}{y_3}, 1 \right) \prod_{p < \infty} |\eta_{1, p}|_p^{3-2s} |\eta_{2, p}|_p^{2s-2} |\eta_{4, p}|_p^{4-2s}\\
    & \prod_{p < \infty} \gamma_{p}\left( \frac{\eta_{1, p}^2}{\eta_{2, p}} \right) \left( 1 - p^{-2(s-1/2)} \right) \frac{1-p^{-2(s-1/2)+1} \left| \frac{\eta_{1, p}^2}{\eta_{2, p}} \right|_{p}^{2(s-1/2)-1}}{1-p^{-2(s-1/2)+1}} \times \\
    & \times \prod_{p < \infty} \gamma_{p}\left( \frac{\eta_{4, p}^2}{\eta_{3, p}} \right) \left( 1 - p^{-2(s-1)} \right) \frac{1-p^{-2(s-1)+1} \left| \frac{\eta_{4, p}^2}{\eta_{3, p}} \right|_{p}^{2(s-1)-1}}{1-p^{-2(s-1)+1}} \\
    =\sum_{z' \in \BZ} {}& \e{x_{12} + x_{45}} y_1^{3-2s} y_2^{2s-2}y_4^{4-2s} \frac{\xi(2s-1)}{\xi(2s)} \prod_{p < \infty} |z|_p^{5-2s} \\
    & 4\left| \frac{y_1^2}{y_2} \right|_{\infty}^{s-3/2}\left| \frac{y_4^2}{y_3} \right|_{\infty}^{s-2} K_{s-3/2}\left( 2\pi \left| \frac{y_1^2}{y_2} \right|_{\infty} \right) K_{s-2}\left( 2\pi \left| \frac{y_4^2}{y_3} \right|_{\infty} \right) \\
    & \sigma_{-2(s-1/2)+1}(|z|_\infty) \sigma_{-2(s-1)+1}(|z|_\infty) \\
    =\sum_{z' \in \BZ} {}& 4 \e{x_{12} + x_{45}} y_2^{s-1/2}y_3^{2-s} \frac{\xi(2s-1)}{\xi(2s)} |z|_\infty^{2s-5} \\
    & K_{s-3/2}\left( 2\pi \left| \frac{y_1^2}{y_2} \right|_{\infty} \right) K_{s-2}\left( 2\pi \left| \frac{y_4^2}{y_3} \right|_{\infty} \right) \sigma_{2-2s}(|z|_\infty) \sigma_{3-2s}(|z|_\infty)\,.
\end{align}
The $x$'s and $y$'s are the Iwasawa coordinates for the matrix $l_z g_\infty$.}

\begin{table}[t]
    \begin{align*}
        \begin{tabu}{cl|c|c|c|c}
            &w_c & \left\langle w_c^{-1} \lambda + \rho | \alpha_2 \right\rangle & \left\langle w_c^{-1} \lambda + \rho | \alpha_4 \right\rangle & M\left( w_c^{-1}, \lambda \right) & \left( w_c w_0' \right)^{-1} \lambda + \rho \\
            \hline
            &\operatorname{Id} & 2s & 0 & 1 & \dots \\
            &w_{21} & 0 & 0 & \dots & \dots \\
            &w_{23} & 0 & 2(s-1) & \dots & \dots \\
            *&w_{213} & 2(s-1) & 2(s-1) & \frac{\xi(2s-2)^2}{\xi(2s)\xi(2s-1)} & v \\
            &w_{213243} & 0 & 0 & \dots & \dots \\
        \end{tabu}
    \end{align*}
    \caption{\label{tab:5}Data for the reduction formula \eqref{eqn:reduction} to evaluate $W_{\alpha_2, \alpha_4}(a)$ on $\SL_5$ with $\lambda = 2s\Lambda_2 - \rho$. The star indicates the Weyl word that contributes to the reduction formula. We wrote  $v = [0, 4-2s, 2s-3, 4-2s]$  to save space.}

\end{table}
The third and fourth Whittaker functions are found by the reduction formula with the data from table~\ref{tab:5}. The $p$-adic Iwasawa-decomposition of $l_{xy}$ has
\begin{equation}
    |\eta_{1, p}|_p^{-1} = \max\{|y|_p, |x|_p\}
    \qtextq{and}
    |\eta_{2, p}|_p = |\eta_{3, p}|_p = |\eta_{4, p}|_p = 1\,
    .
\end{equation}

{\allowdisplaybreaks We get
\begin{align}
\label{ex1part3}\\
      \sum_{x', y} {}& W_{\alpha_2, \alpha_4}\left( \lambda; l_{xy} (g_\infty, I_n, I_n, \cdots) \right) = \\
      = \sum_{x', y} {}& \e{x_{23} + x_{45}} y_2^{4-2s} y_3^{2s-3} y_4^{4-2s} \frac{\xi(2s-2)^2}{\xi(2s)\xi(2s-1)}  \\
    & B_{s-1}\left( \frac{y_2^2}{y_1 y_3}, 1 \right) B_{s-1}\left( \frac{y_4^2}{y_3}, 1 \right)\prod_{p < \infty} |\eta_{2, p}|_p^{4-2s} |\eta_{3, p}|_p^{2s-3} |\eta_{4, p}|_p^{4-2s} \\
    & \prod_{p < \infty} \gamma_{p}\left( \frac{\eta_{2, p}^2}{\eta_{1, p} \eta_{3, p}} \right) \left( 1 - p^{-2(s-1)} \right) \frac{1-p^{-2(s-1)+1} \left| \frac{\eta_{2, p}^2}{\eta_{1, p} \eta_{3, p}} \right|_{p}^{2(s-1)-1}}{1-p^{-2(s-1)+1}} \times \\
    & \times \prod_{p < \infty} \gamma_{p}\left( \frac{\eta_{4, p}^2}{\eta_{3, p}} \right) \left( 1 - p^{-2(s-1)} \right) \frac{1-p^{-2(s-1)+1} \left| \frac{\eta_{4, p}^2}{\eta_{3, p}} \right|_{p}^{2(s-1)-1}}{1-p^{-2(s-1)+1}} \\
    = \sum_{x', y \in \BZ} {}& \e{x_{23} + x_{45}} y_2^{4-2s} y_3^{2s-3} y_4^{4-2s} \frac{1}{\xi(2s)\xi(2s-1)} 4 \left| \frac{y_2^2}{y_1 y_3} \right|_\infty^{s-3/2} \left| \frac{y_4^2}{y_3} \right|_\infty^{s-3/2} \\
    & K_{s-3/2}\left( 2\pi \left|\frac{y_2^2}{y_1 y_3} \right|_\infty \right) K_{s-3/2}\left( 2\pi \left| \frac{y_4^2}{y_3} \right|_\infty \right) \sigma_{-2(s-1)+1}(k) \\
    = \sum_{x', y \in \BZ} {}& 4 \e{x_{23} + x_{45}} y_1^{3/2-s} y_2^{1} y_4^{1} \frac{1}{\xi(2s)\xi(2s-1)}  \\
    & K_{s-3/2}\left( 2\pi \left|\frac{y_2^2}{y_1 y_3} \right|_\infty \right) K_{s-3/2}\left( 2\pi \left| \frac{y_4^2}{y_3} \right|_\infty \right) \sigma_{3-2s}(k)\,,
\end{align}
where $k = \gcd(|y|, |x|)$. Here, the $x$'s and $y$'s are the Iwasawa coordinates for the matrix $l_{xy} g_\infty$.}

The $p$-adic Iwasawa-decomposition of $l_{x}$ has
\begin{equation}
    |\eta_{1, p}|_p^{-1} = \max\{|0|_p, |x|_p\} = |x|_p
    \qtextq{and}
    |\eta_{2, p}|_p = |\eta_{3, p}|_p = |\eta_{4, p}|_p = 1\,
    .
\end{equation}
We get
\begin{equation}
    \begin{aligned}
      \sum_{x'} {}& W_{\alpha_2, \alpha_4}\left( \Lambda; l_x (g_\infty, I_n, I_n, \cdots) \right) = \\
      = \sum_{x'} {}& \e{x_{23} + x_{45}} y_2^{4-2s} y_3^{2s-3} y_4^{4-2s} \frac{\xi(2s-2)^2}{\xi(2s)\xi(2s-1)}  \\
    & B_{s-1}\left( \frac{y_2^2}{y_1 y_3}, 1 \right) B_{s-1}\left( \frac{y_4^2}{y_3}, 1 \right)\prod_{p < \infty} |\eta_{2, p}|_p^{4-2s} |\eta_{3, p}|_p^{2s-3} |\eta_{4, p}|_p^{4-2s}\\
    & \prod_{p < \infty} \gamma_{p}\left( \frac{\eta_{2, p}^2}{\eta_{1, p} \eta_{3, p}} \right) \left( 1 - p^{-2(s-1)} \right) \frac{1-p^{-2(s-1)+1} \left| \frac{\eta_{2, p}^2}{\eta_{1, p} \eta_{3, p}} \right|_{p}^{2(s-1)-1}}{1-p^{-2(s-1)+1}} \\
    & \prod_{p < \infty} \gamma_{p}\left( \frac{\eta_{4, p}^2}{\eta_{3, p}} \right) \left( 1 - p^{-2(s-1)} \right) \frac{1-p^{-2(s-1)+1} \left| \frac{\eta_{4, p}^2}{\eta_{3, p}} \right|_{p}^{2(s-1)-1}}{1-p^{-2(s-1)+1}} \\
    = \sum_{x' \in \BZ} {}& \e{x_{23} + x_{45}} y_2^{4-2s} y_3^{2s-3} y_4^{4-2s} \frac{1}{\xi(2s)\xi(2s-1)} 4 \left| \frac{y_2^2}{y_1 y_3} \right|_\infty^{s-3/2} \left| \frac{y_4^2}{y_3} \right|_\infty^{s-3/2} \\
    & K_{s-3/2}\left( 2\pi \left|\frac{y_2^2}{y_1 y_3} \right|_\infty \right) K_{s-3/2}\left( 2\pi \left| \frac{y_4^2}{y_3} \right|_\infty \right) \sigma_{-2(s-1)+1}(|x|_\infty) \\
    = \sum_{x' \in \BZ} {}& 4 \e{x_{23} + x_{45}} y_1^{3/2-s} y_2^{1} y_4^{1} \frac{1}{\xi(2s)\xi(2s-1)}  \\
    & K_{s-3/2}\left( 2\pi \left|\frac{y_2^2}{y_1 y_3} \right|_\infty \right) K_{s-3/2}\left( 2\pi \left| \frac{y_4^2}{y_3} \right|_\infty \right) \sigma_{3-2s}(|x|_\infty) \, . \label{ex1part4}
    \end{aligned}
\end{equation}
The $x$'s and $y$'s are the Iwasawa coordinates for the matrix $l_x g_\infty$.

The complete Fourier coefficient $\mathcal{F}^{\BR}(E(2s\Lambda_2 - \rho), \psi_{y(Y_1)}; g_\infty)$ is then given by the combination of~\eqref{ex1part1}, \eqref{ex1part2}, \eqref{ex1part3} and \eqref{ex1part4}. We note that the our final result differs formally from the one given in~\cite[Eq.~(H.52)]{GMV15} where the result is given as a convoluted integral over two Bessel functions whereas we do not have any remaining integral. The two results need not be in actual disagreement as there are many non-trivial relations involving infinite sums or integrals of Bessel functions.

The automorphic form
\begin{align}
\lim_{s\to 1/2} \frac{2\zeta(3)\xi(2s-3)}{\xi(2s)} E(2s\Lambda_2-\rho;g) = 2\zeta(3) E(3\Lambda_4-\rho)
\end{align}
lies in a minimal automorphic representation and controls the first non-trivial corrections that string theory predicts to the four-graviton scattering amplitude beyond standard general relativity~\cite{GMRV10,P10}. The Fourier coefficients that we computed above can then be used to to extract so-called 1/2 BPS instanton contributions in the string perturbation limit of the amplitude. More precisely, they represent non-perturbative corrections to the scattering amplitude that, albeit smooth, are not analytic in the string coupling constant around vanishing coupling. They are therefore not visible in standard perturbation theory for small coupling but represent important correction nonetheless. Their interpretation is in terms of specific D$p$-branes ($p\le 2$) that are extended $(p+1)$-dimensional objects that can extend on non-trivial cycles of the torus $T^3$ that is present when $\SL_5$ is the duality group. The detailed structure of the Fourier coefficient, in particular the arithmetic divisor sums appearing, can shed some light on the combinatorics of these D-branes similar to what is happening in the $\SL_2$ case~\cite{Yi:1997eg,Sethi:1997pa,Moore:1998et}.

For the next non-trivial correction to the four-graviton scattering amplitude one requires an automorphic form in the next-to-minimal automorphic representation~\cite{GMRV10,P10,GMV15}. This function is not a single Eisenstein series of the type we have analysed above but a very special combination of two formally divergent Eisenstein series with some Fourier coefficients computed using the Mellin transform of a theta lift in~\cite{GMV15}.

\appendix
\section{Euler products and twisted characters}
\label{app:euler}

This appendix contains details and explanations for section~\ref{sec:sl5}, which is why we restrict to the field $F = \BQ$ with the corresponding ring of adeles $\BA = \BA_\BQ$.

An Euler product is a product over the primes. The $p$-adic norm is denoted $|\cdot|_p$ and is defined for the $p$-adic numbers $\BQ_p$. The absolute value norm or ``infinity norm'' is denoted $|\cdot|_\infty$ and is defined for real numbers $\BR = \BQ_\infty$. The $p$-adic numbers as well as the real numbers (being completions of the rational numbers) all contain the rational numbers: $\BQ \subset \BQ_p$ for all $p$ prime. The norm of an adele $x = (x_\infty, x_2, x_3, x_5, \dots) \in \BA$ is denoted $|\cdot|$ (without ornaments) and is the product of norms at the local places
\begin{equation}
    |x| = \prod_{p \leq \infty} |x_p|_p\,
    .
\end{equation}
The rational numbers $\BQ$ are diagonally embedded into the adeles $\BA$
\begin{equation}
    \BQ \subset \BA \quad{}\text{in the sense that}\quad{} (q, q, q, q, \dots) \in \BA \quad{}\text{for}\quad{} q \in \BQ\,
    .
\end{equation}

\vspace{1.0em}
\paragraph{\bf Product of norms}
For a rational number $x \in \BQ$ with a decomposition into primes as 
\begin{align}
  x = \pm \prod_p p^{m^{(p)}}\,,
\end{align}
we get a particularly simple result for the adelic norm of $x$, namely
\begin{align}
  \prod_{p \leq \infty} |x|_p = |x|_\infty \prod_p p^{-m^{(p)}} = |x|_\infty |x|_\infty^{-1} = 1\,
  \label{eqn:unitproduct}
  .
\end{align}
This is most often used as
\begin{equation}
    x \in \BQ \quad{}\Rightarrow \quad{} \prod_p |x|_p = |x|_\infty^{-1}\,
    .
\end{equation}

\vspace{1.0em}
\paragraph{\bf Greatest common divisor}
For a set of natural numbers $\{ x_i \}$ where each $x_i$ has a decomposition into primes as
\begin{align}
  x_i = \prod_p p^{m^{(p)}_i}\,,
\end{align}
one can express the greatest common divisor $k$ as
\begin{align}
    k \equiv \gcd(\{x_i \}) = \prod_p p^{\min_i \left\{ m_i^{(p)} \right\} }
  .
\end{align}
Together with
\begin{align}
  \left| x_i \right|_p = p^{-m_i^{(p)}}\,,
\end{align}
we are led to the expression
\begin{align}
    k = \prod_p p^{\min_i \left\{ m_i^{(p)} \right\} } = \prod_p \min_i\left\{ p^{ m_i^{(p)}} \right\} = \prod_p \min_i\left\{ |x_i|_p^{-1} \right\} = \prod_p \left( \max_i \{\left| x_i \right|_p\} \right)^{-1}
  .
\end{align}
We also have the formula
\begin{align}
  \left| k \right|_p = \max_i \left\{ \left| x_i \right|_p \right\}
  \label{eqn:pnormk}
  .
\end{align}

Note that
\begin{equation}
    \gcd(x_1, \cdots, x_n, 0) = \gcd(x_1, \cdots, x_n)\,,
\end{equation}
since every (nonzero) integer divides 0. Additionally, we define
\begin{equation}
    \gcd(x) = x \quad\forall x \in \BZ\,,
\end{equation}
including $x = 0$.

\vspace{1.0em}
\paragraph{\bf Divisor sum}
We have the identity 
\begin{align}
  \prod_p \frac{1-p^{-s}\left| m \right|_p^s}{1 - p^{-s}} = \sum_{d|m}d^{-s} \equiv \sigma_{-s}(m)\,,
\end{align}
for $s \in \BC$ and $m \in \BZ$.

\vspace{1.0em}
\paragraph{\bf The completed Riemann zeta function}
The Riemann zeta function
\begin{align}
  \zeta(s) = \sum_{n=1}^\infty n^{-s} , \quad \Re(s) >1
\end{align}
can be written as an Euler product as
\begin{align}
  \zeta(s) = \prod_p \frac{1}{1 - p^{-s}}, \quad \Re(s)>1
\end{align}
and can be analytically continued to the whole complex plane except at $s = 0$ and $s = 1$ where it has simple poles. This is done by defining the completed Riemann zeta function
\begin{equation}
    \xi(s) \equiv \Gamma\left( \frac{s}{2} \right)\pi^{-s/2}\zeta(s)
\end{equation}
which obeys the functional relation
\begin{equation}
    \xi(s) = \xi(1-s)
\end{equation}
as shown by Riemann.

\vspace{1.0em}
\paragraph{\bf $p$-adic gaussian}
The $p$-adic gaussian $\gamma_p: \BQ_p \rightarrow \{0, 1\}$ is defined as
\begin{equation}
    \gamma_p(x) = 
    \begin{cases}
        1, & |x|_p \leq 1 \\
        0, & |x|_p > 1 \\
    \end{cases}
    =
    \begin{cases}
        1, & x \in \BZ_p \\
        0, & x \nin \BZ_p\,. \\
    \end{cases}
\end{equation}
For a rational number $x$ we then get
\begin{equation}
    \prod_p \gamma_p(x) = 
    \begin{cases}
        1, & x \in \BZ_p \forall p \\
        0, & \text{else}
    \end{cases}
    =
    \begin{cases}
        1, & x \in \BZ \\
        0, & \text{else\,.}
    \end{cases}
\end{equation}

Notice also that for rational numbers $x_1,\ldots, x_n\in \BQ$ and picking an $x \in \BQ$ such that for all primes $p$
\begin{equation}
    |x|_p = \max \{|x_1|_p, \cdots, |x_n|_p \} \,,
    \label{eqn:specialx}
\end{equation}
we have
\begin{equation}
    \gamma_p(x) = \prod_{i = 1}^n \gamma_p(x_i)
    .
\end{equation}
A consequence of this is that for an eulerian function depending only on the $p$-adic norms of its argument
\begin{equation}
    f(x) = \prod_p f_p(|x|_p)\,,
\end{equation}
then with $x$ as in \eqref{eqn:specialx}, we have
\begin{equation}
    f(x) \prod_p \gamma_p(x) = \prod_p f_p(|x|_p) \gamma_p(x) = \prod_p f_p(|k|_p) \gamma_p(x) = f(k) \prod_p \gamma_p(x)\,,
\end{equation}
where
\begin{equation}
    k = \gcd(|x_1|_\infty, \cdots, |x_n|_\infty)\,.
\end{equation}
This equation makes sense as $\prod_p \gamma_p(x)$ ensures that the left- and right hand sides are nonzero only when each $x_i$ is integer for which $k$ is well defined. We now see how a sum over rationals with $x$ as in \eqref{eqn:specialx} can collapse to a sum over integers due to the $p$-adic gaussian
\begin{equation}
    \sum_{x_1, \ldots, x_n} f(x) \prod_p \gamma_p(x) = \sum_{x_1, \ldots, x_n} f(k) \prod_p \gamma_p(x) = \sum_{x_1, \ldots, x_n \in \BZ} f(k)\, .
\end{equation}

\vspace{1.0em}
\paragraph{\bf $\SL(2)$ Whittaker function}
The ramified (meaning $m$ not necessarily unity) $\SL(2, \BA)$ Whittaker function evaluated at
\begin{align}
  g = \left( g_\infty, \I, \I, \dots \right) = 
  \left( 
  \left(
  \begin{array}{cc}
    1 & x \\
    0 & 1
  \end{array}
  \right)
  \left(
  \begin{array}{cc}
    y & 0 \\
    0 & \frac{1}{y}
  \end{array}
  \right)
  k, \I, \I, \dots
  \right)
\end{align}
written as an Euler product reads
\begin{align}
  \begin{aligned}
    &W_{\alpha}\left( 2s\Lambda - \rho, m; g \right) = W_{\alpha}\left( 2s\Lambda - \rho, m;
    \left( 
    \left(
    \begin{smallmatrix}
      1   &   x   \\
      &   1   \\
    \end{smallmatrix}
    \right)
    \left(
    \begin{smallmatrix}
      y   &       \\
      &\frac{1}{y}\\
    \end{smallmatrix}
    \right)
    k, \I, \I, \dots
   \right)
    \right) = \\
    ={}&
    \e{mx}
    B_{s}(m, y)
    \prod_p \gamma_p(m) \left( 1-p^{-2s} \right)\frac{1-p^{-2s+1}|m|_p^{2s-1}}{1 - p^{-2s+1}}\,,
  \end{aligned}
\end{align}
where $\alpha$ is the simple root and $\Lambda$ is the fundamental weight. Here
\begin{align}
\label{eq:SL2Whitt}
  B_s (m, y) \equiv \frac{2\pi^s}{\Gamma(s)} y^{1/2} |m|^{s-1/2} K_{s-1/2}\left( 2\pi |m| y \right)
\end{align}
should be seen as the archimedean $\SL(2)$-Whittaker function and each factor in the Euler product as the non-archimedean Whittaker functions. The product
\begin{align}
  \prod_p \gamma_p(m)
\end{align}
restricts to $m \in \BZ$ as explained above. The expression can then be written as
\begin{align}
  W_{\alpha}\left( 2s\Lambda - \rho, m; g \right) = \e{mx}\frac{2}{\xi(2s)}y^{1/2} |m|^{s-1/2} \sigma_{1-2s}(|m|) K_{s-1/2}\left( 2\pi |m| y \right) \,
  .
\end{align}
Notice how the factors of the Eulerian expression for the Riemann zeta function in the non-archimedean part combines with $\pi^s/\Gamma(s)$ in the archimedean part to form a completed Riemann zeta function.

\vspace{1.0em}
\paragraph{\bf Twisted character}
Let $m \in \BQ$ and $\psi_{p,m}$ be an additive character on $\BQ_p$ defined as
\begin{alignat}{3}
  \psi_{\infty, m}(x) &= e^{2\pi i m x}\,; \quad &  m, x &\in \BR &  &\qtext{for real numbers} \label{eqn:realcharacter} \\
  \psi_{p, m}(x) &= e^{-2\pi i \left[ m x \right]_p}\,; \quad & m, x &\in \BQ_p &  &\qtext{for $p$-adic numbers\,.} \label{eqn:padiccharacter}
\end{alignat}
An unitary multiplicative character on the unipotent radical $N(\BA)$ of the Borel subgroup of $\SL(n, \BA)$ can then be parametrized by $m_1, \dots, m_{n-1} \in \BQ$ as 
\begin{align}
  \psi( n ) = \psi( e^{\sum_{\alpha \in \Delta_+} u_{\alpha} E_{\alpha} } ) = \psi( e^{\sum_{\alpha \in \Pi} u_{\alpha} E_{\alpha} } ) = \prod_{p \leq \infty} \prod_{i=1}^n \psi_{p, m_i} \left( \left( u_{\alpha_i} \right)_p \right)\,, \label{eqn:adeliccharacter}
\end{align}
where $\Delta_+$ is the set of positive roots and $\Pi = \left\{ \alpha_1, \dots, \alpha_{n-1} \right\} \subset \Delta_+$ is the set of simple roots. The second equality is due to the fact that the additive character is only sensitive to the abelianization of $N(\BA)$. In the final equality, $\left( x_{\alpha_i} \right)_p$ denotes the $p$-adic (or real) component of the adelic coordinate $x_{\alpha_i}$. 

For an element $a \in A(\ads)$, we would like to evaluate the twisted character
\begin{align}
  \psi^a(n) \equiv \psi( a n a^{-1}) \, .
\end{align}

Let $x_\alpha(t) = \exp(t E_\alpha)$ where $t \in \ads$ and $E_\alpha$ is the positive Chevalley generator for the root $\alpha$. 
For $t \in \ads^\times$, define $w_\alpha(t) = x_\alpha(t) x_{-\alpha}(-t^{-1}) x_\alpha(t)$ and $h_\alpha(t) = w_\alpha(t) w_\alpha(1)^{-1}$. 
An element $a \in A(\ads)$ is then parametrized by \cite{Steinberg}
\begin{equation}
    a = \prod_{i=1}^{n-1} h_{\alpha_i}(y_i) \qquad y_i \in \ads^\times \, ,
\end{equation}
where the different generators $h_\alpha$ commute for all simple roots $\alpha \in \Pi$ and are multiplicative in $t_\alpha$.
For a simple root $\alpha$ and a root $\beta$, we have that \cite{Steinberg}
\begin{equation}
    h_{\alpha}(y) x_{\beta}(u) h_{\alpha}(y)^{-1} = x_{\beta}(y^{\beta(H_\alpha)} u)\,.
\end{equation}

Since the character $\psi$ on $N$ is sensitive only to the $x_\beta$ with $\beta$ a simple root, it is enough to consider 
\begin{equation}
    h_{\alpha_i}(y) x_{\alpha_j}(u) h_{\alpha_i}(y)^{-1} = x_{\alpha_j}(y^{A_{ij}} u)
\end{equation}
for the simple roots $\alpha_i$ and $\alpha_j$, where $A_{ij}$ is the Cartan matrix.

We then have that
\begin{multline}
        \psi(a n a^{-1}) = \psi\bigl(\exp\Bigl(\sum_{j = 1}^{n-1} \bigl(\prod_{i = 1}^{n-1} y_i^{A_{ij}} \bigr) x_{\alpha_j} E_{\alpha_j}\Bigr) \bigr) = \\
        = \psi\bigl(\exp\Bigl(\frac{y_1^2}{y_2} x_{\alpha_1} E_{\alpha_1} +  \sum_{j=2}^{n-2} \frac{y_j^2}{y_{j-1} y_{j+1}} x_{\alpha_j} E_{\alpha_j} + \frac{y_{n-1}^2}{y_{n-2}} x_{\alpha_{n-1}} E_{\alpha_{n-1}} + \dots \Bigr) \bigr) \, .
\end{multline}

We can interpret the transformation $\psi \rightarrow \psi^a$ as that the parameters $m_i$  transform according to
\begin{align}
  m_i &\rightarrow m_i' = \left( \frac{y_i^2}{y_{i-1}y_{i+1}} \right) m_i, \quad i = 1, \dots, n-1\,,
\end{align}
where we have defined $y_0 = y_n = 1$. Note that starting with rational parameters $m_i$, the transformed parameters $m_i'$ are no longer necessarily rational.

\section{Iwasawa-decomposition}
\label{sec:iwasawa}
Proof of the following results can be found in \cite{A16}. For a real matrix $g \in \SL_n(\BR)$ written in Iwasawa form
\begin{equation}
    g = n_\infty a_\infty k_\infty = 
    \left( 
    \begin{smallmatrix}
        1 & x_{12} & \cdots & \cdots & x_{1n} \\
          & 1 & \ddots & \ddots & \vdots \\
          & & \ddots & \ddots & \vdots \\
          & & & 1 & x_{n-1, n} \\
          & & & & 1
    \end{smallmatrix}
    \right)
    \left( 
    \begin{smallmatrix}
        y_1 & & & & \\
          & y_2/y_1 & & & \\
          & & \ddots & & \\
          & & & y_{n-1}/y_{n-2} & \\
          & & & & 1/y_{n-1}
    \end{smallmatrix}
    \right)
    k_{\infty}\,,
\end{equation}
we have
\begin{align}
x_{\mu \nu} ={} & y_{\nu-1}^2 \epsilon\left( V_\mu, V_{\nu+1}, \dots, V_n; V_{\nu}, V_{\nu+1}, \dots, V_n \right), \quad \mu < \nu, \label{eqn:axions} \qtext{and} \\
y_{\mu}^{-2} ={} & \epsilon\left( V_{\mu+1}, \dots, V_n; V_{\mu+1}, \dots, V_n \right)\,, \label{eqn:realdilatons}
\end{align}
where $V_\mu$ is the $\mu$\textsuperscript{th} row of $g$ (regarded as an $n$-vector). Furthermore, $\epsilon$ denotes the totally antisymmetric product
\begin{equation}
\begin{aligned}
  \epsilon\left( A_1, \dots, A_m; B_1, \dots, B_m \right) = \delta_{a_{1} \dash a_m}^{i_{1} \dash i_m} \left( A_{1} \right)^{a_{1}} \dots{} \left( A_m \right)^{a_m} \left( B_{1} \right)_{i_{1}} \dots{} \left( B_m \right)_{i_m}\,,
\end{aligned}
\end{equation}
where the $A$'s and $B$'s are $n$-vectors and 
\begin{align}
\delta_{a_{1} \dash a_m}^{i_{1} \dash i_m} = m! \delta_{\left[ a_{1} \right.}^{i_{1}} \dots \delta_{\left. a_{m} \right]}^{i_{m}} = \frac{1}{(n-m)!} \epsilon_{a_1 \dash a_m \alpha_{m+1} \dash \alpha_n} \epsilon^{i_1 \dash i_m \alpha_{m+1} \dash \alpha_n}
\end{align}
denotes the generalized Kronecker delta. Put in words, $\epsilon$ takes two sets of vectors and returns the sum of every possible product of scalar products between the two sets weighted by the signs of the given permutations. For example
\begin{equation}
    \epsilon(A_1, A_2; B_1, B_2) = (A_1\cdot B_1)(A_2\cdot B_2) - (A_1\cdot B_2)(A_2\cdot B_1)\,
    .
\end{equation}

For a $p$-adic matrix $g \in \SL_n(\BQ_p)$, the Iwasawa-decomposition
\begin{equation}
    g = n_p a_p k_p = 
    n_p
    \left( 
    \begin{smallmatrix}
        \eta_{1, p} & & & & \\
        & \eta_{2, p}/\eta_{1, p} & & & \\
          & & \ddots & & \\
          & & & \eta_{n-1, p}/\eta_{n-2, p} & \\
          & & & & 1/\eta_{n-1, p}
    \end{smallmatrix}
    \right)
    k_p
\end{equation}
is no longer unique. The $p$-adic norms of the $\eta$'s however are constant across the family of decompositions and are given by
\begin{equation}
\left| \eta_{n-k} \right|_p =
\left(
\max_{\sigma \in \Theta_{k}^{n}}
\left\{ \left|
  g\left(
  \begin{smallmatrix}
    n-k+1 & \dots & n\\
    \sigma(1) & \dots & \sigma(k) \\
  \end{smallmatrix}
  \right)
\right|_p \right\}
\right)^{-1}\,,
\label{eqn:min}
\qtextq{where}
k \in \{1, \dots, n-1\}\,,
\end{equation}
and $\Theta_k^{n}$ detones the set of all ordered subsets of $\{1, \dots, n\}$ of order $k$. Here, 
$
g\left(
\begin{smallmatrix}
  r_{1} & \dots & r_{k} \\
  c_{1} & \dots & c_{k} \\
\end{smallmatrix}
\right)
\label{eqn:minor}
$
denotes a minor of order $k$, given as the determinant of the submatrix of $g$ obtained by only picking the $k$ rows $\{r_{i}\}$ and $k$ columns $\{c_{i}\}$. For example, the matrix
\begin{equation}
    \begin{psmallmatrix}
        \frac{1}{1} & \frac{1}{2} & \frac{1}{3} & \frac{1}{4} \\
        \frac{1}{5} & \frac{1}{6} & \frac{1}{7} & \frac{1}{8} \\
        \frac{1}{9} & \frac{1}{10} & \frac{1}{11} & \frac{1}{12} \\
        \frac{1}{13} & \frac{1}{14} & \frac{1}{15} & \frac{1}{16} \\
    \end{psmallmatrix}\,
    .
\end{equation}
has
\begin{equation}
    \begin{aligned}
        |\eta_2|_3 ={}&
        \max\left\{
            \left| \left|
            \begin{smallmatrix}
                \frac{1}{9}  & \frac{1}{10} \\
                \frac{1}{13} & \frac{1}{14} \\
            \end{smallmatrix}
            \right| \right|_3, 
            \left| \left|
            \begin{smallmatrix}
                \frac{1}{9}  & \frac{1}{11} \\
                \frac{1}{13} & \frac{1}{15} \\
            \end{smallmatrix}
            \right| \right|_3, 
            \left| \left|
            \begin{smallmatrix}
                \frac{1}{9}  & \frac{1}{12} \\
                \frac{1}{13} & \frac{1}{16} \\
            \end{smallmatrix}
            \right| \right|_3, 
            \left| \left|
            \begin{smallmatrix}
                \frac{1}{10} & \frac{1}{11} \\
                \frac{1}{14} & \frac{1}{15} \\
            \end{smallmatrix}
            \right| \right|_3, 
            \left| \left|
            \begin{smallmatrix}
                \frac{1}{10} & \frac{1}{12} \\
                \frac{1}{14} & \frac{1}{16} \\
            \end{smallmatrix}
            \right| \right|_3, 
            \left| \left|
            \begin{smallmatrix}
                \frac{1}{11} & \frac{1}{12} \\
                \frac{1}{15} & \frac{1}{16} \\
            \end{smallmatrix}
            \right| \right|_3
        \right\} = \\
        ={}&
        \max \left\{ 
            \left| \frac{1}{4095} \right|_3, 
            \left| \frac{8}{19305} \right|_3, 
            \left| \frac{1}{1872} \right|_3, 
            \left| \frac{1}{5775} \right|_3, 
            \left| \frac{1}{3360} \right|_3, 
            \left| \frac{1}{7920} \right|_3, 
        \right\} \\
        ={}&
        \max \left\{ 3^2, 3^3, 3^2, 3^1, 3^1, 3^2 \right\} = 3^3 = 9\,
        .
    \end{aligned}
\end{equation}

\section{Parametrizing \texorpdfstring{$\Gamma_i$}{Gamma\_i} and \texorpdfstring{$\Lambda_j$}{Lambda\_j}}
\label{sec:cosets}
Recall the definitions
\begin{equation}
    \Gamma_i(\psi_0) \defeq
    \begin{cases}
        (\SL_{n-i}(F))_{\hat Y} \bs \SL_{n-i}(F) & 1 \leq i \leq n-2 \\
        (T_{\psi_0} \cap T_{\psi_{\alpha_{n-1}}}) \bs T_{\psi_0} & i = n-1\,,
    \end{cases}
\end{equation}
where
\begin{equation}
    (\SL_{n-i}(F))_{\hat Y} = \left\{ 
        \begin{psmallmatrix}
            1 & \xi^{\mathrm{T}} \\
            0 & h
        \end{psmallmatrix}
        : h \in \SL_{n-i-1}(F), \xi \in F^{n-i-1}
    \right\}\,;
\end{equation}
and
\begin{equation}
    \Lambda_j(\psi_0) \defeq
    \begin{cases}
        (\SL_j(F))_{\hat X} \bs \SL_j(F) & 2 < j \leq n \\
        (T_{\psi_0} \cap T_{\psi_{\alpha_1}}) \bs T_{\psi_0} & j = 2\,,
    \end{cases}
\end{equation}
where
\begin{equation}
    (\SL_{j}(F))_{\hat X} = \left\{ 
        \begin{psmallmatrix}
            h & \xi \\
            0 & 1
        \end{psmallmatrix}
        : h \in \SL_{j-1}(F), \xi \in F^{j-1}
    \right\}\,
    .
\end{equation}
In this appendix, we will find convenient representatives for these coset spaces. We begin with a lemma.

\begin{lem}
    \label{lem:GLcoset}
    Let $S_k(F)$ denote the set of all $k\times k$ matrices $m$ over the field $F$ satisfying $\dim \ker m = 1$. The coset space $\GL_{k}(F) \bs S_k(F)$ can then be parametrized as
    \begin{equation}
        \GL_{k}(F) \bs S_k =
        \bigcup_{a = 0}^{k-1}
        \left\{ 
            \left( 
            \begin{smallmatrix}
                0 & 0 & 0 \\
                I_a & 0 & 0 \\
                0 & v & I_{k-a-1}
            \end{smallmatrix}
            \right)
            : v \in F^{k-a-1}
        \right\}\,.
    \end{equation}
\end{lem}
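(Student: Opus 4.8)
The plan is to classify matrices $m \in S_k(F)$ by the action of $\GL_k(F)$ from the left, which corresponds to applying row operations. The key invariant is the one-dimensional kernel $\ker m \subseteq F^k$; two matrices $m, m'$ with $\dim\ker m = \dim\ker m' = 1$ lie in the same $\GL_k(F)$-orbit on the left if and only if $\operatorname{row}(m) = \operatorname{row}(m')$ and this happens exactly when $\ker m = \ker m'$ (since for a $k \times k$ matrix the row space is the orthogonal complement of the kernel). So the left cosets $\GL_k(F) \bs S_k(F)$ are in bijection with the lines in $F^k$ that can occur as the kernel — but every line occurs, since given any $0 \neq w \in F^k$ one can build a rank $k-1$ matrix with kernel exactly $Fw$. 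Thus $\GL_k(F)\bs S_k(F)$ is in bijection with $\mathbb{P}^{k-1}(F)$.

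Next I would make this bijection explicit by choosing a canonical representative in each orbit. Given $m$ with $\ker m = Fw$, bring $m$ to reduced row echelon form by left multiplication with a suitable element of $\GL_k(F)$; the resulting matrix has exactly $k-1$ pivots (one per nonzero row) and one zero row, and its shape is completely determined by the position of the non-pivot column, which is dual to the ``first nonzero entry'' of $w$. Writing $a$ for the number of pivot columns appearing strictly before the non-pivot column (so $0 \leq a \leq k-1$), the reduced echelon form is precisely
\begin{equation}
    \begin{psmallmatrix}
        0 & 0 & 0 \\
        I_a & 0 & 0 \\
        0 & v & I_{k-a-1}
    \end{psmallmatrix},
\end{equation}
where the single free column $v \in F^{k-a-1}$ records the entries of that non-pivot column below the block $I_a$ (the entries above are forced to zero by the echelon condition). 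Conversely each such matrix manifestly has rank $k-1$, hence lies in $S_k(F)$, and distinct pairs $(a,v)$ give distinct reduced echelon forms, so they lie in distinct cosets. This establishes that the displayed union exhausts $\GL_k(F)\bs S_k(F)$ without repetition.

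The only point requiring a little care — and the place I would slow down — is the claim that left $\GL_k(F)$-equivalence is detected exactly by the row space (equivalently the kernel), together with checking that the reduced row echelon form of an element of $S_k(F)$ really does have the stated block shape for a unique $a$. Both are elementary linear algebra over a field: uniqueness of RREF under left multiplication by $\GL_k$ is standard, and the block description just amounts to bookkeeping of which columns are pivot columns. I would state these as the two verifications and leave the routine computations to the reader. No deeper input (nothing about $\SL_n$, characters, or orbits) is needed here; this lemma is purely a statement about coset representatives that will subsequently be applied with $k = n-i$ or $k = j$ to parametrize $\Gamma_i$ and $\Lambda_j$.
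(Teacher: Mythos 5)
Your reduction of the problem to classifying row spaces (equivalently kernels) is sound: left multiplication by $\GL_k(F)$ preserves the row space, two corank-one matrices with the same row space are left-equivalent (both equal an invertible matrix times their common echelon form), and for a corank-one matrix the row space is the annihilator of the one-dimensional kernel, so the cosets are in bijection with lines in $F^k$. This organizing principle is cleaner than the paper's own proof, which argues by induction on $k$ with explicit row reduction and a case split on the rank of the lower-right block.

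However, the second half of your argument contains a genuine error: the matrices displayed in the lemma are \emph{not} reduced row echelon forms, and the RREF of an element of $S_k(F)$ does not have the displayed shape. In the lemma's representative the zero row sits on top and the free column $v$ occupies the rows of the $I_{k-a-1}$ block, i.e.\ the non-pivot column is free \emph{below}, in rows whose leading $1$'s lie to its \emph{right}; in a genuine RREF the zero row is at the bottom and a non-pivot column can be nonzero only in rows whose pivots lie to its \emph{left}. Concretely, for $k=2$, $a=0$ the lemma's representative is $\begin{psmallmatrix}0&0\\ v&1\end{psmallmatrix}$, whose RREF for $v\neq 0$ is $\begin{psmallmatrix}1&v^{-1}\\ 0&0\end{psmallmatrix}$; in general the RREFs form the family $\begin{psmallmatrix} I_{c-1}&w&0\\ 0&0&I_{k-c}\\ 0&0&0\end{psmallmatrix}$, $w\in F^{c-1}$, which is a different (also valid) complete set of representatives, corresponding to normalizing the kernel vector at its \emph{last} nonzero coordinate rather than its first. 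So your exhaustiveness and distinctness claims, resting on the identification with RREF, prove a variant of the statement but not the lemma as written, and the specific block form does matter for how the representatives are used later (e.g.\ in the proof of the $\Gamma_i$ parametrization). The repair inside your framework is short: the displayed matrix has kernel spanned by ${}^t(0,\dots,0,1,-v_1,\dots,-v_{k-a-1})$ with the $1$ in position $a+1$, and every line in $F^k$ has a unique spanning vector whose first nonzero coordinate equals $1$; hence the displayed matrices have rank $k-1$ and meet every left coset exactly once, which is the lemma.
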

\begin{proof}
    We will use induction to prove the lemma. Assume that the result holds up to and including matrices of size $k\times k$ and consider the coset space $\GL_{k+1}(F) \bs S_{k+1}(F)$. Start with a matrix $m_{k+1} \in S_{k+1}(F)$. Left action of the group $\GL_{k+1}(F) \ni h_{k+1}$ taking $m_{k+1} \rightarrow h_{k+1}m_{k+1}$ is equivalent to performing Gauss elimination among the rows of $m_{k+1}$. Since we have $\dim \ker m_{k+1} = 1$ we can bring $m_{k+1}$ to the form
    \begin{equation}
        m_{k+1} \rightarrow
        \begin{psmallmatrix}
            0 & 0 \\
            v & m
        \end{psmallmatrix}\,,
    \end{equation}
    where $v \in F^k$ and $m$ satisfies $\dim \ker m_k \leq 1$. We cannot have $\dim \ker m \geq 2$ as we could then perform additional row manipulations to produce two zero rows in $m$ and hence another zero row in $m_{k+1}$ which violates $m_{k+1} \in S_{k+1}(F)$.
    \subsubsection{Case 1: $\dim \ker m = 0$} 
    Here $m$ is invertible and we can bring $m_{k+1}$ to the form
    \begin{equation}
        \begin{psmallmatrix}
            0 & 0 \\
            v & m_k
        \end{psmallmatrix}
        \rightarrow
        \begin{psmallmatrix}
            0 & 0 \\
            v & I_k
        \end{psmallmatrix}
    \end{equation}
    having relabelled $v$. We get the contribution
    \begin{equation}
        \left.
        \left\{
            \begin{psmallmatrix}
                0 & 0 & 0\\
                I_a & 0 & 0 \\
                0 & v & I_{k+1-a-1}
            \end{psmallmatrix}
            : v \in F^{k+1-a-1}
        \right\}
        \right|_{a=0}\,
        .
    \end{equation}
    \subsubsection{Case 2: $\dim \ker m = 1$} 
    We now have $m = m_k$ for some $m_k \in S_k$ and we can apply the induction assumption which leads us to consider matrices of the form
    \begin{equation}
        \begin{psmallmatrix}
            0 & 0 & 0 & 0 \\
            v^{(1)} & 0 & 0 & 0 \\
            v^{(2)} & I_a & 0 & 0 \\
            v^{(3)} & 0 & u & I_{k-a-1}
        \end{psmallmatrix}\,,
        \quad\text{where}\quad a \in [0, k-2] \cap \BZ\,
        .
    \end{equation}
    We see that we must have $v^{(1)} \neq 0$ and with further row manipulations we can thus bring this to the form
    \begin{equation}
        \begin{psmallmatrix}
            0 & 0 & 0 & 0 \\
            v^{(1)} & 0 & 0 & 0 \\
            v^{(2)} & I_a & 0 & 0 \\
            v^{(3)} & 0 & u & I_{k-a-1}
        \end{psmallmatrix}
        \rightarrow
        \begin{psmallmatrix}
            0 & 0 & 0 & 0 \\
            1 & 0 & 0 & 0 \\
            0 & I_a & 0 & 0 \\
            0 & 0 & u & I_{k-a-1}
        \end{psmallmatrix}\,
        .
    \end{equation}
    We get the contributions
    \begin{equation}
        \bigcup_{a = 0}^{k-2}
        \left\{ 
            \begin{psmallmatrix}
                0 & 0 & 0 & 0 \\
                1 & 0 & 0 & 0 \\
                0 & I_a & 0 & 0 \\
                0 & 0 & v & I_{k-a-1}
            \end{psmallmatrix}
            : v \in F^{k-a-1}
        \right\}
        =
        \bigcup_{a = 1}^{k-1}
        \left\{ 
            \left( 
            \begin{smallmatrix}
                0 & 0 & 0 \\
                I_a & 0 & 0 \\
                0 & v & I_{k+1-a-1}
            \end{smallmatrix}
            \right)
            : v \in F^{k+1-a-1}
        \right\}\,
        .
    \end{equation}
    This combines with the contribution from case 1 to give the form stated in the lemma.
    
    That the base case $k = 1$ has the correct form is trivial. Peano's axiom of induction now establishes the lemma.
\end{proof}

\begin{lem}
    \label{lem:gammacoset}
    The coset space
    \begin{equation}
        (\SL_{n-i}(F))_{\hat Y} \bs \SL_{n-i}(F) \quad 1 \leq i \leq n-2
    \end{equation}
    can be parametrized as
\begin{equation}
    \begin{aligned}
        & (\SL_{n-i}(F))_{\hat Y} \bs \SL_{n-i}(F) = \\
        ={}& \left\{
            \begin{psmallmatrix}
                x'^{-1} & 0 & 0 \\
                y & x' & 0 \\
                v & 0 & I_{n-i-2}
            \end{psmallmatrix}
            : x' \in F^{\times}, y \in F, v \in F^{n-i-2}
        \right\} \\
        \cup{}&
        \bigcup_{a = 0}^{n-i-2}
        \left\{
            \begin{psmallmatrix}
                0 & 0 & (-1)^{a+1} x'^{-1} & 0 \\
                x' & 0 & 0 & 0 \\
                0 & I_a & 0 & 0 \\
                0 & 0 & v & I_{n-i-a-2}
            \end{psmallmatrix}
            : x' \in F^{\times}, v \in F^{n-i-a-2}
        \right\} \,.\\
    \end{aligned}
\end{equation}
\end{lem}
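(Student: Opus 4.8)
The plan is to exploit the identification already set up in the proof of Lemma~\ref{lem:gamma}. Write $k = n-i$, so $2 \le k \le n-1$. There it is recorded that $\gamma \mapsto \gamma^{-1}\hat Y$ induces a bijection
\[
    (\SL_{k}(F))_{\hat Y}\bs\SL_{k}(F)\;\iso\;\Mat_{k\times 1}(F)^\times = F^{k}\setminus\{0\}\,,
\]
which is well defined and injective because $\hat Y$ is fixed by $(\SL_{k}(F))_{\hat Y}$, and surjective because $\SL_{k}(F)$ acts transitively on nonzero column vectors (this uses $k\ge 2$). Hence it suffices to show that the union on the right-hand side of the lemma meets every right coset exactly once; equivalently, that for each $w\in F^{k}\setminus\{0\}$ there is exactly one matrix $\gamma$ in that union with $\gamma w=\hat Y$.

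I would organize this by stratifying $F^{k}\setminus\{0\}$ according to the position $p\in\{1,\dots,k\}$ of the first nonzero entry of $w$. For $p=1$, a direct back-substitution in the equation $\gamma w=\hat Y$ for a matrix $\gamma$ of the first displayed family yields $w={}^t(x',-y,-x'v)$, so as $(x',y,v)$ runs over $F^{\times}\times F\times F^{k-2}$ that family parametrizes the stratum $w_1\neq 0$ bijectively. For $p=a+2$ with $0\le a\le k-2$, the same kind of back-substitution for the $a$-th matrix of the second family (whose column blocks have sizes $1,a,1,k-a-2$) gives $w={}^t(0,0_a,(-1)^{a+1}x',(-1)^{a}x'v)$, so that family parametrizes the stratum $w_1=\dots=w_{a+1}=0\neq w_{a+2}$ bijectively. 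Since the strata $p=1,\dots,k$ partition $F^{k}\setminus\{0\}$ and the two families meet disjoint strata, the union is a complete, non-redundant set of coset representatives.

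What remains is routine verification: each listed matrix lies in $\SL_{k}(F)$, which follows from a one-step cofactor expansion along the first column, and the sign $(-1)^{a+1}$ in the second family is precisely what forces the determinant to equal $1$ rather than $-1$. I expect the only real friction to be the block bookkeeping in the second family and keeping the signs straight; there is no conceptual obstacle. As an alternative route one could row-reduce a general $\gamma\in\SL_k(F)$ directly --- left multiplication by $(\SL_{k}(F))_{\hat Y}$ performs $\SL_{k-1}$ row operations on rows $2,\dots,k$ and adds $F$-linear combinations of those rows to row $1$, which is the Gauss-elimination mechanism behind Lemma~\ref{lem:GLcoset} --- but passing through the vector $\gamma^{-1}\hat Y$ as above is cleaner.
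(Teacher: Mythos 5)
Your proposal is correct, but it takes a genuinely different route from the paper. The paper proves the lemma "from the group side": it writes a general $G\in\SL_{k}(F)$ ($k=n-i$) in block form, uses the stabilizer action $G\mapsto MG$ as Gauss elimination on the bottom $k-1$ rows plus additions of those rows to the top row, splits into the cases $\dim\ker m=0$ and $\dim\ker m=1$ for the lower-right $(k-1)\times(k-1)$ block, invokes the auxiliary Lemma~\ref{lem:GLcoset} on $\GL_{k-1}(F)\bs S_{k-1}(F)$ in the singular case, and restores $\det h=1$ at the end — exactly the "alternative route" you sketch in your last sentence. You instead work "from the orbit side": you reuse the bijection $(\SL_k(F))_{\hat Y}\bs\SL_k(F)\iso F^k\setminus\{0\}$, $\gamma\mapsto\gamma^{-1}\hat Y$, already recorded in the proof of Lemma~\ref{lem:gamma}, and check that the listed matrices solve $\gamma w=\hat Y$ for exactly one $w$ in each stratum determined by the position of the first nonzero entry of $w$; your back-substitutions $w={}^t(x',-y,-x'v)$ and $w={}^t(0,0_a,(-1)^{a+1}x',(-1)^{a}x'v)$ are correct, the strata partition $F^k\setminus\{0\}$, and the determinant check (giving the sign $(-1)^{a+1}$) is as you say. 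Your argument is shorter and self-contained in that it needs no analogue of Lemma~\ref{lem:GLcoset} and yields uniqueness of representatives for free from the explicit inversion of the parametrization; the paper's elimination argument is heavier on bookkeeping but has the virtue of showing how the representatives are \emph{found} (and its Case-2 machinery via Lemma~\ref{lem:GLcoset} is reused in the companion Lemma~\ref{lem:lambdacoset}). Both are complete proofs of the statement.
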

\begin{proof}
    Denote $k \equiv n-i$. Consider a matrix
    \begin{equation}
        G =  
        \begin{psmallmatrix}
            s & T^{\mathrm{T}} \\
            B & m
        \end{psmallmatrix}
        \in \SL_k(F)\,,
    \end{equation}
    where $s$ is a scalar, $m$ is a $(k-1)\times (k-1)$-matrix and $T$ and $B$ (for ``top'' and ``bottom'') are $(k-1)$-column vectors.
    The action of an element
    \begin{equation}
        M =  
        \begin{psmallmatrix}
            1 & \xi^{\mathrm{T}} \\
            0 & h
        \end{psmallmatrix}
        \in (\SL_{k}(F))_{\hat Y}
    \end{equation}
    on $G$ is
    \begin{equation}
        G \rightarrow MG = 
        \begin{psmallmatrix}
            s + \xi^{\mathrm{T}} B & T^{\mathrm{T}} + \xi^{\mathrm{T}} m \\
            h B & h m
        \end{psmallmatrix}\,
        .
    \end{equation}
    Parametrizing the coset space $(\SL_{k}(F))_{\hat Y} \bs \SL_{k}(F)$ amounts to choosing $\xi \in F^{k-1}$ and $h \in \SL_{k-1}(F)$ such that the product $MG$ takes a particularly nice form, manifestly with at most $k$ degrees of freedom which is the dimension of this coset space.

    Even though $h \in \SL_{k-1}(F)$ we will proceed with $h \in \GL_{k-1}(F)$ and restore the unit determinant of $h$ at the end by left multiplication of the matrix
    $
        \begin{psmallmatrix}
            1 & 0 & 0 \\
            0 & x' & 0 \\
            0 & 0 & I_{k-2}
        \end{psmallmatrix}
    $
    where $0 \neq x' = (\det h)^{-1}$. By having $h \in \GL_{k-1}(F)$ we are free to perform Gauss elimination among the bottom $k-1$ rows in $G$.

    We consider the two cases $\dim \ker m = 0$ and $\dim \ker m = 1$. Note that the cases $\dim \ker m \geq 2$ do not arise as with row elimination it would then be possible to produce two zero-rows in $m$ and hence a zero-row in $G$ which violates $G \in \SL_k(F)$.
    
    \subsubsection{Case 1: $\dim \ker m = 0$}
    We choose $h = m^{-1}$ and $\xi^{\mathrm{T}} = -T^{\mathrm{T}} m^{-1}$. Since $h$ has full rank, we can redefine $hB \rightarrow B$ without loss of generality and redefine $s + \xi^{\mathrm{T}} B \rightarrow s$. This leads to the representative
    \begin{equation}
        G \rightarrow
        \begin{psmallmatrix}
            s & 0 \\
            B & I_{k-1}
        \end{psmallmatrix}\,
        .
    \end{equation}
    We now restore the unit determinant to $h$
    \begin{equation}
        G \rightarrow 
        \begin{psmallmatrix}
            1 & 0 & 0 \\
            0 & x' & 0 \\
            0 & 0 & I_{k-2}
        \end{psmallmatrix}
        \begin{psmallmatrix}
            s & 0 \\
            B & I_{k-1}
        \end{psmallmatrix}
        =
        \begin{psmallmatrix}
            s & 0 & 0 \\
            y & x' & 0 \\
            v & 0 & I_{k-2}
        \end{psmallmatrix}\,,
    \end{equation}
    where we have split the $(k-1)$-vector into a scalar $y$ and a $(k-2)$-vector $v$. The condition $\det G = 1$ now sets $s = x'^{-1}$ leading to
    \begin{equation}
        G 
        =
        \begin{psmallmatrix}
            x'^{-1} & 0 & 0 \\
            y & x' & 0 \\
            v & 0 & I_{k-2}
        \end{psmallmatrix}\,
        .
    \end{equation}
    This is a nice form of the representative $G$ which manifestly has $k$ degrees of freedom.

    \subsubsection{Case 2: $\dim \ker m = 1$}
    We can no longer choose $h = m^{-1}$. Having promoted $h$ to be an element of $\GL_{k-1}(F)$, we can make use of lemma \ref{lem:GLcoset} which leads us to consider representatives of the form
    \begin{equation}
        G \rightarrow
        \begin{psmallmatrix}
            s & T^{(1)\mathrm{T}} & T^{(2)} & T^{(3)\mathrm{T}} \\
            B^{(1)} & 0 & 0 & 0 \\
            B^{(2)} & I_a & 0 & 0 \\
            B^{(3)} & 0 & v & I_{k-a-2}
        \end{psmallmatrix}
        \quad\text{for}\quad a \in [0, k-2] \cap \BN
        \quad\text{and}\quad v \in F^{k-a-2}\,
        .
    \end{equation}
    We see that $B^{(1)} \neq 0$ in order for $G$ to remain non-singular. With further row elimination we can therefore bring this to the form
    \begin{equation}
        \begin{psmallmatrix}
            s & T^{(1) \mathrm{T}} & T^{(2)} & T^{(3) \mathrm{T}} \\
            B^{(1)} & 0 & 0 & 0 \\
            B^{(2)} & I_a & 0 & 0 \\
            B^{(3)} & 0 & v & I_{k-a-2}
        \end{psmallmatrix}
        \rightarrow
        \begin{psmallmatrix}
            s & T^{(1) \mathrm{T}} & T^{(2)} & T^{(3) \mathrm{T}} \\
            1 & 0 & 0 & 0 \\
            0 & I_a & 0 & 0 \\
            0 & 0 & v & I_{k-a-2}
        \end{psmallmatrix}\,
        .
    \end{equation}
    Next, using the $\xi$-freedom we can bring this to the form
    \begin{equation}
        \begin{aligned}
            &
            \begin{psmallmatrix}
                s & T^{(1) \mathrm{T}} & T^{(2)} & T^{(3) \mathrm{T}} \\
                1 & 0 & 0 & 0 \\
                0 & I_a & 0 & 0 \\
                0 & 0 & v & I_{k-a-2}
            \end{psmallmatrix}
            \rightarrow
            \begin{psmallmatrix}
                1 & \xi^{(1)} & \xi^{(2) \mathrm{T}} & \xi^{(3) \mathrm{T}} \\
                0 & 1 & 0 & 0 \\
                0 & 0 & I_a & 0 \\
                0 & 0 & 0 & I_{k-a-2}
            \end{psmallmatrix}
            \begin{psmallmatrix}
                s & T^{(1) \mathrm{T}} & T^{(2)} & T^{(3) \mathrm{T}} \\
                1 & 0 & 0 & 0 \\
                0 & I_a & 0 & 0 \\
                0 & 0 & v & I_{k-a-2}
            \end{psmallmatrix} \\
            ={}&
            \begin{psmallmatrix}
                s + \xi^{(1)} & T^{(1) \mathrm{T}} + \xi^{(2) \mathrm{T}} & T^{(2)} + \xi^{(3) \mathrm{T}} v & T^{(3) \mathrm{T}} + \xi^{(3) \mathrm{T}} \\
                1 & 0 & 0 & 0 \\
                0 & I_a & 0 & 0 \\
                0 & 0 & v & I_{k-a-2}
            \end{psmallmatrix}
            \rightarrow
            \begin{psmallmatrix}
                0 & 0 & T^{(2)} & 0 \\
                1 & 0 & 0 & 0 \\
                0 & I_a & 0 & 0 \\
                0 & 0 & v & I_{k-a-2}
            \end{psmallmatrix}\,,
        \end{aligned}
    \end{equation}
    with a suitable choice of $\xi$ and having redefined $T^{(2)}$. We now restore the unit determinant to $h$
    \begin{equation}
        \begin{psmallmatrix}
            0 & 0 & T^{(2)} & 0 \\
            1 & 0 & 0 & 0 \\
            0 & I_a & 0 & 0 \\
            0 & 0 & v & I_{k-a-2}
        \end{psmallmatrix}
        \rightarrow 
        \begin{psmallmatrix}
            1 & 0 & 0 \\
            0 & x' & 0 \\
            0 & 0 & I_{k-2}
        \end{psmallmatrix}
        \begin{psmallmatrix}
            0 & 0 & T^{(2)} & 0 \\
            1 & 0 & 0 & 0 \\
            0 & I_a & 0 & 0 \\
            0 & 0 & v & I_{k-a-2}
        \end{psmallmatrix}
        \rightarrow 
        =
        \begin{psmallmatrix}
            0 & 0 & T^{(2)} & 0 \\
            x' & 0 & 0 & 0 \\
            0 & I_a & 0 & 0 \\
            0 & 0 & v & I_{k-a-2}
        \end{psmallmatrix}\,
        .
    \end{equation}
    The condition $\det G = 1$ now sets $T^{(2)} = (-1)^{a+1} x'^{-1}$, leading to the representative
    \begin{equation}
        \begin{psmallmatrix}
            0 & 0 & (-1)^{a+1} x'^{-1} & 0 \\
            x' & 0 & 0 & 0 \\
            0 & I_a & 0 & 0 \\
            0 & 0 & v & I_{k-a-2}
        \end{psmallmatrix}\,
        .
    \end{equation}
\end{proof}

\begin{lem}
    \label{lem:lambdacoset}
    The coset space
    \begin{equation}
        (\SL_{j}(F))_{\hat X} \bs \SL_{j}(F) \quad 2 < j \leq n
    \end{equation}
    can be parametrized as
\begin{equation}
    \begin{aligned}
        & (\SL_{j}(F))_{\hat X} \bs \SL_{j}(F) = \\
        ={}& \left\{
            \begin{psmallmatrix}
                I_{j-2} & 0 & 0 \\
                0 & x' & 0 \\
                v^{\mathrm{T}} & y & x'^{-1}
            \end{psmallmatrix}
            : x' \in F^{\times}, y \in F, v \in F^{j-2}
        \right\} \\
        \cup{}&
        \bigcup_{a = 0}^{j-2}
        \left\{
            \begin{psmallmatrix}
                I_a & 0 & 0 & 0 \\
                0 & 0 & I_{j-a-2} & 0 \\
                0 & 0 & 0 & x' \\
                0 & (-1)^{j+a+1} x'^{-1} & v^{\mathrm{T}} & 0
            \end{psmallmatrix}
            : x' \in F^{\times}, v \in F^{j-a-2}
        \right\} \,.\\
    \end{aligned}
\end{equation}
\end{lem}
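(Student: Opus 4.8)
The plan is to deduce Lemma~\ref{lem:lambdacoset} from Lemma~\ref{lem:gammacoset} by an explicit involutive automorphism of $\SL_j$ that interchanges the two mirabolic-type stabilizers. Let $w_0$ be the antidiagonal permutation matrix of size $j$ (a representative of the longest Weyl element), so that ${}^tw_0 = w_0$, $w_0^2 = I_j$ and $\det(w_0)^2 = 1$, and define $\phi\colon \SL_j(F)\to\SL_j(F)$ by $\phi(g) = w_0\,{}^t(g^{-1})\,w_0$. Then $\phi$ is a group automorphism with $\phi\circ\phi = \mathrm{id}$ (it is conjugation by $w_0$ composed with the anti-automorphism $g\mapsto {}^tg^{-1}$), and it lands in $\SL_j$ since $\det\phi(g) = \det(w_0)^2\det(g)^{-1} = 1$. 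Because conjugation by $w_0$ reverses the order of the standard basis we have $w_0\,{}^t\hat X = \hat Y$ and $w_0\hat Y = {}^t\hat X$; hence if $g$ fixes the row vector $\hat X$ on the right, then $\phi(g)$ fixes the column vector $\hat Y$ on the left, so $\phi\bigl((\SL_j(F))_{\hat X}\bigr) = (\SL_j(F))_{\hat Y}$ (equality follows by applying $\phi$ once more). Therefore $\phi$ descends to a bijection $(\SL_j(F))_{\hat X}\bs\SL_j(F)\;\xrightarrow{\ \sim\ }\;(\SL_j(F))_{\hat Y}\bs\SL_j(F)$.

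\textbf{Transporting the representatives.} Next I would invoke Lemma~\ref{lem:gammacoset} with $n-i$ replaced by $j$: although that lemma is stated for $1\le i\le n-2$, its proof (through Lemma~\ref{lem:GLcoset}) applies verbatim to $(\SL_k(F))_{\hat Y}\bs\SL_k(F)$ for every $k\ge 3$, in particular for $k=j$ with $3\le j\le n$, and yields the two explicit families of $\hat Y$-representatives. It then remains to apply $\phi = \phi^{-1}$ to each representative and simplify: a block lower-triangular matrix with an identity block and parameters $x',y,v$ is sent by $g\mapsto{}^tg^{-1}$ to a block upper-triangular matrix of the transposed shape, and the ensuing conjugation by $w_0$ reverses both rows and columns, turning the ``$\hat Y$-shape'' (nonzero data in the first row and column) into the ``$\hat X$-shape'' (nonzero data in the last row and column) required by the statement. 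For the first family a short computation produces $\begin{psmallmatrix} I_{j-2} & 0 & 0 \\ 0 & x' & 0 \\ v^{\mathrm{T}} & y & x'^{-1}\end{psmallmatrix}$ after the harmless relabellings $x'\mapsto x'^{-1}$, $y\mapsto -y$ and $v\mapsto$ its reversal (each a bijection of the parameter space). For the second family, $\phi$ carries the union $\bigcup_{a=0}^{j-2}$ to itself but with the reindexing $a\mapsto j-a-2$, producing $\begin{psmallmatrix} I_a & 0 & 0 & 0 \\ 0 & 0 & I_{j-a-2} & 0 \\ 0 & 0 & 0 & x' \\ 0 & (-1)^{j+a+1}x'^{-1} & v^{\mathrm{T}} & 0\end{psmallmatrix}$.

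\textbf{Main obstacle.} The only genuine work is this last bookkeeping, in particular the sign. The exponent $(-1)^{a+1}$ of Lemma~\ref{lem:gammacoset}, after transposition, the reversal by $w_0$ (which conjugates the underlying permutation by the order-reversing permutation, hence preserves its sign), and the reindexing $a\mapsto j-a-2$, must be checked to equal $(-1)^{j+a+1}$ --- the value that is in any case forced a posteriori by $\det = 1$ once the permutation pattern of the representative is fixed. One should also check the degenerate instances ($j=n$, $j=3$, and the boundary parameters $a=0$, $a=j-2$) where some displayed blocks have size $0$. A self-contained alternative that avoids $\phi$ altogether is to repeat the proof of Lemma~\ref{lem:gammacoset} verbatim with right multiplication by $(\SL_j(F))_{\hat X}$ in place of left multiplication by $(\SL_j(F))_{\hat Y}$ --- that is, performing column operations rather than row operations and using the transpose of Lemma~\ref{lem:GLcoset} --- with the identical case split on $\dim\ker m$; the sign analysis is then exactly the same.
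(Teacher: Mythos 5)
Your setup is sound: $\phi(g)=w_0\,{}^t(g^{-1})\,w_0$ is an involutive automorphism of $\SL_j$ that interchanges $(\SL_j(F))_{\hat X}$ and $(\SL_j(F))_{\hat Y}$, it descends to a bijection of the two coset spaces, and transporting the \emph{first} family of Lemma~\ref{lem:gammacoset} does land on the first family of the statement after relabellings (including a rescaling $v\mapsto -x'\cdot(\text{reversal of }v)$ that you omitted, which is harmless). The genuine gap is the second family: $\phi$ does \emph{not} carry it, family by family or even as a union of matrices, onto the second family of the statement. Indeed, the coset $(\SL_j(F))_{\hat X}\,g$ is determined by the last row $\hat X g$, and the last row of $\phi(G)$ is the reversal of the first column of $G^{-1}$; for the Lemma~\ref{lem:gammacoset} representative with parameter $a$ that first column has its \emph{first} nonzero entry at position $a+2$, so the transported representatives are stratified by the position of the \emph{last} nonzero entry of the bottom row, whereas the listed representatives are stratified by the position of the \emph{first} nonzero entry. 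No reindexing matches these stratifications; in particular your proposed $a\mapsto j-a-2$ fails already on parameter counts, since the gamma-$a$ family carries $(x',v)\in F^\times\times F^{j-a-2}$ while the target family it is supposed to hit carries $F^\times\times F^{a}$. Concretely, for $j=3$, $a=0$,
\begin{equation}
\phi\begin{psmallmatrix}0&-x'^{-1}&0\\ x'&0&0\\ 0&v&1\end{psmallmatrix}
=\begin{psmallmatrix}1&0&0\\ 0&0&x'^{-1}\\ x'v&-x'&0\end{psmallmatrix},
\end{equation}
which for $v\neq0$ is none of the listed matrices (for $j=3$ these are $\begin{psmallmatrix}0&1&0\\0&0&x'\\x'^{-1}&v&0\end{psmallmatrix}$ and $\begin{psmallmatrix}1&0&0\\0&0&x'\\0&-x'^{-1}&0\end{psmallmatrix}$). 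So the issue is not merely the sign bookkeeping you flagged: the transported set is a valid but genuinely \emph{different} system of representatives, and your argument as written does not establish the parametrization claimed in the lemma.

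The cheapest repair within your strategy is to stop trying to match matrices: since $\SL_j(F)$ acts transitively on nonzero row vectors and the stabilizer of $\hat X$ is exactly $(\SL_j(F))_{\hat X}$, the map $g\mapsto\hat X g$ identifies $(\SL_j(F))_{\hat X}\bs\SL_j(F)$ with $F^j\setminus\{0\}$, and one checks directly that the bottom rows of the listed matrices, namely $(v^{\mathrm T},y,x'^{-1})$ and $(0_{1\times a},(-1)^{j+a+1}x'^{-1},v^{\mathrm T},0)$ for $0\le a\le j-2$, hit every nonzero row vector exactly once. Alternatively, argue directly as the paper does: it keeps left multiplication by $(\SL_j(F))_{\hat X}$ (row operations on the top $j-1$ rows via the $h$-freedom, plus the $\xi$-freedom), splitting on whether the corner entry $s$ of $G=\begin{psmallmatrix}m&T\\ B^{\mathrm T}&s\end{psmallmatrix}$ is nonzero or zero, rather than on $\dim\ker m$. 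Note also that your fallback sketch (``right multiplication by $(\SL_j(F))_{\hat X}$, column operations'') parametrizes $\SL_j(F)/(\SL_j(F))_{\hat X}$ rather than the left coset space of the statement, so it would additionally require inverting the resulting representatives and re-checking the forms, which is exactly the alternative route mentioned in the paper's remark following the lemma.
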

\begin{proof}
    Denote $k \equiv j$. Consider a matrix
    \begin{equation}
        G =  
        \begin{psmallmatrix}
            m & T \\
            B^{\mathrm{T}} & s
        \end{psmallmatrix}
        \in \SL_k(F)\,,
    \end{equation}
    where $s$ is a scalar, $m$ is a $(k-1)\times (k-1)$-matrix and $T$ and $B$ (for ``top'' and ``bottom'') are $(k-1)$-column vectors.
    The action of an element
    \begin{equation}
        M =  
        \begin{psmallmatrix}
            h & h \xi \\
            0 & 1
        \end{psmallmatrix}
        \in (\SL_{k}(F))_{\hat X}
    \end{equation}
    on $G$ is
    \begin{equation}
        G \rightarrow MG = 
        \begin{psmallmatrix}
            h(m + \xi B^{\mathrm{T}}) & h(T + s \xi) \\
            B^{\mathrm{T}} & s
        \end{psmallmatrix}\,
        .
    \end{equation}
    Parametrizing the coset space $(\SL_{k}(F))_{\hat X} \bs \SL_{k}(F)$ amounts to choosing $\xi \in F^{k-1}$ and $h \in \SL_{k-1}(F)$ such that the product $MG$ takes a particularly nice form, manifestly with at most $k$ degrees of freedom which is the dimension of this coset space.

    Even though $h \in \SL_{k-1}(F)$ we will proceed with $h \in \GL_{k-1}(F)$ and restore the unit determinant of $h$ at the end by left multiplication of the matrix
    $
        \begin{psmallmatrix}
            I_{k-2} & 0 & 0 \\
            0 & x' & 0 \\
            0 & 0 & 1
        \end{psmallmatrix}\,,
    $
    where $0 \neq x' = (\det h)^{-1}$. By having $h \in \GL_{k-1}(F)$ we are free to perform Gauss elimination among the top $k-1$ rows in $G$.

    We consider the two cases $s \neq 0$ and $s = 0$.

    \subsubsection{Case 1: $s = s' \neq 0$}
    We choose $\xi = \frac{-1}{s'} T$. This leads to the representative
    \begin{equation}
        G \rightarrow
        \begin{psmallmatrix}
            m - \frac{1}{s'}T B^{\mathrm{T}} & 0 \\
            B^{\mathrm{T}} & s'
        \end{psmallmatrix}\,
        .
    \end{equation}
    From the condition
    \begin{equation}
        1 = \det G = \det\left( m - \frac{1}{s'}T B^{\mathrm{T}} \right) s'
    \end{equation}
            we get that
        $$\det\left( m - \frac{1}{s'}T B^{\mathrm{T}} \right) \neq 0\,,$$
    and hence the matrix $m - \frac{1}{s'}T B^{\mathrm{T}}$ can be inverted using our $h$-freedom which leads to the representative
    \begin{equation}
        \begin{psmallmatrix}
            m - \frac{1}{s'}T B^{\mathrm{T}} & 0 \\
            B^{\mathrm{T}} & s'
        \end{psmallmatrix}
        \rightarrow
        \begin{psmallmatrix}
            I_{k-1} & 0 \\
            B^{\mathrm{T}} & s'
        \end{psmallmatrix}\,
        .
    \end{equation}
    We now restore the unit determinant to $h$
    \begin{equation}
        G \rightarrow 
        \begin{psmallmatrix}
            I_{k-2} & 0 & 0 \\
            0 & x' & 0 \\
            0 & 0 & 1
        \end{psmallmatrix}
        \begin{psmallmatrix}
            I_{k-1} & 0 \\
            B^{\mathrm{T}} & s'
        \end{psmallmatrix}
        =
        \begin{psmallmatrix}
            I_{k-2} & 0 & 0 \\
            0 & x' & 0 \\
            v^{\mathrm{T}} & y & s'
        \end{psmallmatrix}\,,
    \end{equation}
    where we have split $B$ into a scalar $y$ and a $(k-2)$-vector $v$. The condition $\det G = 1$ now sets $s' = x'^{-1}$ leading to
    \begin{equation}
        G 
        =
        \begin{psmallmatrix}
            I_{k-2} & 0 & 0 \\
            0 & x' & 0 \\
            v^{\mathrm{T}} & y & x'^{-1}
        \end{psmallmatrix}\,
        .
    \end{equation}
    This is a nice form of the representative $G$ which manifestly has $k$ degrees of freedom.

    \subsubsection{Case 2: $s = 0$}
    We can no longer eliminate $T$ with our $\xi$-freedom. The group element $G$ takes the form
    \begin{equation}
        G 
        =
        \begin{psmallmatrix}
            m & T' \\
            B'^{\mathrm{T}} & 0 \\
        \end{psmallmatrix}\,,
    \end{equation}
    where the vectors $T'$ and $B'$ must be non-zero (as indicated by the primes) in order for $G$ to be non-singular. A $\xi$-transformation takes the form
    \begin{equation}
        \begin{psmallmatrix}
            m & T' \\
            B'^{\mathrm{T}} & 0 \\
        \end{psmallmatrix}
        \rightarrow
        \begin{psmallmatrix}
            m + \xi B'^{\mathrm{T}} & T' \\
            B'^{\mathrm{T}} & 0 \\
        \end{psmallmatrix}\,
        .
    \end{equation}
    We now consider the $k-1$ distinct cases labelled by $a \in [0, k-2] \cap \BN$ defined by that $B'^{\mathrm{T}}$ takes the form $
    B'^{\mathrm{T}} = 
    \begin{psmallmatrix}
        0_{1\times a} & b' & v
    \end{psmallmatrix},
    $ where $v$ is a $k-a-2$-vector and $0 \neq b' \in F$. The $\xi$-transformation then lets us eliminate the $(a+1)$\textsuperscript{th} column of $m$. This works since the $(a+1)$\textsuperscript{th} column of the matrix $\xi B'^{\mathrm{T}}$ is $b' \xi$ where $b' \neq 0$ by assumption. We are led to the representative
    \begin{equation}
        \begin{psmallmatrix}
            m & T' \\
            B'^{\mathrm{T}} & 0 \\
        \end{psmallmatrix}
        \rightarrow
        \begin{psmallmatrix}
            m_1 & 0 & m_2 & T' \\
            0 & b' & v^{\mathrm{T}} & 0 \\
        \end{psmallmatrix}\,,
    \end{equation}
    where $m_1$ is a $(k-1)\times a$-matrix and $m_2$ is a $(k-1)\times (k-a-2)$-matrix.

    The $(k-1)\times (k-1)$ matrix $
    \begin{psmallmatrix}
        m_1 & 0 & m_2
    \end{psmallmatrix}
    $ clearly has column-rank at most $k-2$. Since column-rank and row-rank for matrices are equal, we know that the row-rank is also at most $k-2$ and with row manipulations we can thus produce a zero row
    \begin{equation}
        \begin{psmallmatrix}
            m_1 & 0 & m_2 & T' \\
            0 & b' & v^{\mathrm{T}} & 0 \\
        \end{psmallmatrix}
        \rightarrow
        \begin{psmallmatrix}
            0 & 0 & 0 & t' \\
            m_{21} & 0 & m_{22} & T^{-} \\
            0 & b' & v^{\mathrm{T}} & 0 \\
        \end{psmallmatrix}\,,
    \end{equation}
    where $0 \neq t' \in F$ in order for $G$ to be non-singular. With further row manipulations we can then eliminate the vector $T^{-}$ and bring this to the form
    \begin{equation}
        \begin{psmallmatrix}
            0 & 0 & 0 & t' \\
            m_{21} & 0 & m_{22} & T^{-} \\
            0 & b' & v^{\mathrm{T}} & 0 \\
        \end{psmallmatrix}
        \rightarrow
        \begin{psmallmatrix}
            0 & 0 & 0 & 1 \\
            m_{21} & 0 & m_{22} & 0 \\
            0 & b' & v^{\mathrm{T}} & 0 \\
        \end{psmallmatrix}\,
        .
    \end{equation}
    The $(k-2)\times (k-2)$-matrix $
    \begin{psmallmatrix}
        m_{21} & m_{22}
    \end{psmallmatrix}
    $
    must have full rank in order for $G$ to be non-singular and can thus be inverted, leading to the representative
    \begin{equation}
        \begin{psmallmatrix}
            0 & 0 & 0 & 1 \\
            m_{21} & 0 & m_{22} & 0 \\
            0 & b' & v^{\mathrm{T}} & 0 \\
        \end{psmallmatrix}
        \rightarrow
        \begin{psmallmatrix}
            0 & 0 & 0 & 1 \\
            I_a & 0 & 0 & 0 \\
            0 & 0 & I_{k-a-2} & 0 \\
            0 & b' & v^{\mathrm{T}} & 0 \\
        \end{psmallmatrix}\,
        .
    \end{equation}
    We permute the first $k-1$ rows to get
    \begin{equation}
        \begin{psmallmatrix}
            0 & 0 & 0 & 1 \\
            I_a & 0 & 0 & 0 \\
            0 & 0 & I_{k-a-2} & 0 \\
            0 & b' & v^{\mathrm{T}} & 0 \\
        \end{psmallmatrix}
        \rightarrow
        \begin{psmallmatrix}
            I_a & 0 & 0 & 0 \\
            0 & 0 & I_{k-a-2} & 0 \\
            0 & 0 & 0 & 1 \\
            0 & b' & v^{\mathrm{T}} & 0 \\
        \end{psmallmatrix}\,
        .
    \end{equation}
    Lastly, we restore the unit determinant to $h$
    \begin{equation}
        \begin{psmallmatrix}
            I_a & 0 & 0 & 0 \\
            0 & 0 & I_{k-a-2} & 0 \\
            0 & 0 & 0 & 1 \\
            0 & b' & v^{\mathrm{T}} & 0 \\
        \end{psmallmatrix}
        \rightarrow 
        \begin{psmallmatrix}
            I_{k-2} & 0 & 0 \\
            0 & x' & 0 \\
            0 & 0 & 1
        \end{psmallmatrix}
        \begin{psmallmatrix}
            I_a & 0 & 0 & 0 \\
            0 & 0 & I_{k-a-2} & 0 \\
            0 & 0 & 0 & 1 \\
            0 & b' & v^{\mathrm{T}} & 0 \\
        \end{psmallmatrix}
        \rightarrow 
        =
        \begin{psmallmatrix}
            I_a & 0 & 0 & 0 \\
            0 & 0 & I_{k-a-2} & 0 \\
            0 & 0 & 0 & x' \\
            0 & b' & v^{\mathrm{T}} & 0 \\
        \end{psmallmatrix}\,
        .
    \end{equation}
    The condition $\det G = 1$ now sets $b' = (-1)^{k+a+1} x'^{-1}$, leading to the representative
    \begin{equation}
        \begin{psmallmatrix}
            I_a & 0 & 0 & 0 \\
            0 & 0 & I_{k-a-2} & 0 \\
            0 & 0 & 0 & x' \\
            0 & (-1)^{k+a+1} x'^{-1} & v^{\mathrm{T}} & 0 \\
        \end{psmallmatrix}\,
        .
    \end{equation}
\end{proof}

\begin{rmk}
    Another way of parametrizing the coset $(\SL_{k}(F))_{\hat X} \bs \SL_k(F)$ is to parametrize the coset $\SL_k(F) / (\SL_{k}(F))_{\hat X}$ which works completely analogously to the how the coset $(\SL_{k}(F))_{\hat Y} \bs \SL_k(F)$ was parametrized in lemma \ref{lem:gammacoset} and then invert the resulting matrices.
\end{rmk}

\section{Levi orbits}
\label{app:levi-orbits}
Let $P_m$ for $1 \leq m \leq n-1$ be the maximal parabolic subgroup of $\SL_n$ associated to the simple root $\alpha_m$ with Levi decomposition $L U_m$ (where we drop the subscript for $L$ for convenience) and let $y \in {}^t u_m(F)$ be parametrised by a matrix $Y \in \Mat_{(n-m) \times m}(F)$ as in \eqref{eq:um-param}.
We will now study the $L(F)$-orbits of elements $y$.

We parametrise an element $l \in L(F)$ by the two matrices $A \in \Mat_{m\times m}(F)$ and $B \in \Mat_{(n-m)\times(n-m)}(F)$ with $\det(A) \det(B) = 1$ as $l = \diag(A, B^{-1})$.
This element acts by conjugation on $y$ as $ly(Y)l^{-1} = y(AYB)$. 
Using unit determinant matrices $A$ and $B$ we may perform standard row and column additions to put $Y$ on a form which has zero elements everywhere except for an anti-diagonal $r \times r$ matrix with non-vanishing determinant in the upper right corner where $0 \leq r \leq \min(m, n-m)$ is the rank of $Y$. 
This can be seen as follows.

If the upper right element is zero, pick any non-zero element whose row-column position we denote $(i, j)$ and add multiples of row $i$ to the first row, and column $j$ to the last column to make the upper right element non-zero.
Then, use the non-zero upper right element to cancel all remaining non-zero elements on the first row and last column by further row and column additions.
Repeat the procedure for the matrix obtained by removing the first row and last column.
The induction terminates when we run out of rows or columns, or when the remaining elements are all zero.

We will now rescale the anti-diagonal elements by conjugating $y$ with diagonal matrices $l$, meaning that the $i$th diagonal element in $l$ rescales both row $i$ and column $i$ (inversely).
Each rescaling of an element in the anti-diagonal of the $r \times r$ matrix then leaves two less diagonal elements in $l$ for further rescalings.
Since the non-zero elements of $Y$ at this stage do not share any rows or columns (because of the anti-diagonal $r \times r$ submatrix), we may then perform any and all rescalings until we run out of free diagonal elements in $l$. 
The number of free diagonal elements in $l$ is $n-1$ because of the determinant condition which means we can make $\floor{\frac{n-1}{2}}$ rescalings. We have that $1 \leq m \leq n-1$ and $r \leq \min(m, n-m) \leq \floor{\frac{n}{2}}$.  Thus, it is possible to rescale all anti-diagonal elements unless $n = 2r = 2m$, for which there will be one remaining anti-diagonal element $d$.
Using conjugations with $l = \diag(a, 1, \ldots, 1, 1/a)$, $a \in F^\times$ this $d$ can be shown to be in $F^\times / (F^\times)^2$. 

We have now shown that the $L_m(F)$-orbits of elements $y(Y) \in {}^t u_m(F)$ are characterized by matrices $Y = Y_r(d) \in \Mat_{(n-m) \times m}(F)$, where $Y_r(d)$ which is non-zero only for an anti-diagonal $r\times r$ matrix in its upper right corner whose elements are all one except the lower left which is $d$
\begin{equation}
    Y_r(d) = 
    \begin{pmatrix}
        \quad 0 &
        \begin{bsmallmatrix}
            & & & 1 \\
            & & \reflectbox{$\ddots$} & \\
            & 1 & & \\
            d & & & 
        \end{bsmallmatrix} \\[1.5em]
        \quad 0 & 0
    \end{pmatrix} \, .
\end{equation}
For $n = 2r = 2m$, $d \in F^\times / (F^\times)^2$ and otherwise $d = 1$.
For convenience we will denote $Y_r(1)$ as $Y_r$. 
Thus, the $L_m(F)$-orbits on ${}^t u_m(F)$ are characterized by the same data as the $\SL_n(F)$-orbits $([2^r1^{n-2r}], d)$ with $d \in F^\times/(F^\times)^k$, $k = \gcd([2^r1^{n-2r}])$ and $0 \leq r \leq \min(m, n-m)$.

By conjugations with the Weyl element $w \in \SL_n(F)$ mapping torus elements $(t_1, t_2, \ldots, t_n) \mapsto (t_1, \ldots t_{m-r}, \underline{t_m, t_{m+1}, t_{m-1}, t_{m+2}, \ldots, t_{m-r+1}, t_{m+r}},$ $t_{m+r+1}, \ldots t_n)$, where we have underlined the changed elements, we see that $y(Y_r(d))$ is put on the form of the standard representative for the $\SL_n(F)$-orbit $([2^r1^{n-2r}], d)$ shown in proposition~\ref{orbits}. 

In this paper we will always be able to find a representative on the form $Y_r(1)$, that is, to rescale all elements, since we consider $n \geq 5$ and $r \leq 2$ where the latter restriction comes from the fact that higher rank elements have vanishing associated Fourier coefficients in a next-to-minimal or minimal automorphic representation according to theorem~\ref{thm:ggsglobal}.

Lastly, we note that if we instead consider $L(\overline F)$-orbits the last remaining rescaling in the maximal rank $n=2r$ case would be possible by conjugation with $l = \diag(\sqrt{d}, 1, \ldots, 1, 1/\sqrt{d})$.

\newpage

\bibliography{SLn}
\bibliographystyle{utphys-alpha}

\end{document}